\documentclass[11pt,a4paper]{article}

\usepackage{epsf,epsfig,amsfonts,amsgen,amsmath,amstext,amsbsy,amsopn,amsthm
}
\usepackage{enumerate}
\usepackage{amsmath}
\usepackage{amsfonts,amsthm,amssymb}
\usepackage{amsfonts}
\usepackage{graphics}
\usepackage{latexsym,bm}
\usepackage{amsfonts,amsthm,amssymb,bbding}
\usepackage{indentfirst}
\usepackage{graphicx}
\usepackage{color}
\usepackage[colorlinks=true,anchorcolor=blue,filecolor=blue,linkcolor=blue,urlcolor=blue,citecolor=red]{hyperref}
\usepackage{moresize}
\usepackage{array}
\usepackage{multirow}
\usepackage{graphicx}
\usepackage{threeparttable}
\usepackage{makecell}
\renewcommand\proofname{\bf Proof}
\newtheorem{theorem}{Theorem}

\newtheorem{lemma}{Lemma}[section]

\newtheorem{false statement}{False statement}

\newtheorem{fact}{Fact}

\theoremstyle{definition}

\newtheorem{claim}{Claim}

\newtheorem{conjecture}{Conjecture}
\newtheorem{corollary}{Corollary}[section]

\begin{document}

\title{\textbf{Minimizing the spectral radius of unicyclic graphs with fixed order and dissociation number}}
\author{\Large Yu Zhou$^{a}$, Yuanyuan Chen$^{a}$\thanks{Corresponding author;
Email addresses:
chenyy\underline{\ }de@sina.com (Y. Chen), zyymee@163.com (Y. Zhou), ldxjedu@163.com (D. Li),
ddfan0526@163.com (D. Fan).}, Dan Li$^{a},$ 
Dandan Fan$^{b}$ \\
\small\it $^a$ College of Mathematics and Systems Science, Xinjiang University,\\
\small\it  Urumqi, Xinjiang 830017, China\\[1mm]
\small\it $^b$ College of Mathematics and Physics, Xinjiang Agricultural University,\\
\small\it Urumqi, Xinjiang 830052, China\\[1mm]}
\date{}

\maketitle

\begin{abstract}

A dissociation set of a graph $G$ is a vertex subset $S\subseteq V(G)$ such that the induced subgraph $G[S]$ has maximum degree  less than or equal to 1, and the dissociation number of $G$ is the cardinality of a maximum dissociation set denoted by $\varphi(G)$. In this paper, we first derive a sharp lower bound on the dissociation number for all unicyclic graphs. Furthermore, we also study the graphs with minimum spectral radius  among all unicyclic graphs with given order and dissociation number and  explicitly identify all extremal graphs for $n-4\leq \varphi(G)\leq n-1$. 
\end{abstract}

\noindent\textbf{Keywords:} Spectral radius; Dissociation number; Unicyclic graph; Extremal graph

\section{Introduction}

Throughout this paper, we consider only simple, finite, connected graphs. Let $G$ be a graph with vertex set $V(G)$ and edge set $E(G)$. Suppose that $N_G(v)$ is the set of neighbors of a vertex $v \in V(G)$, and $d_G(v) = |N_G(v)|$. A vertex $v$ with $d_G(v)=1$ is called a pendant vertex, and its unique incident edge is a pendant edge. For any subset $S\subseteq V(G)$, we denote by \( G[S] \) the subgraph of \( G \) induced by \( S \), and  by \( G - S \) the subgraph induced by \( G[V(G)\setminus S]\). 
A subset \( I \subseteq V(G) \)  is called an independent set if no two vertices in $I$ are adjacent in $G$. The cardinality of a maximum independent set of $G$ is the independence number. A subset \( S \subseteq V(G) \)  is a \textbf{dissociation set} if \( G[S] \) has maximum degree at most 1. The \textbf{dissociation number} \( \varphi(G) \) is the size of a maximum dissociation set in \( G \). In particular, if the maximum degree of $G[S]$ is zero, then this dissociation set is an independent set. Bock et al. studied the relationship between the independence number and the dissociation number in \cite{bock2022, Bock2023} and investigated the bounds of the dissociation number in \cite{Bock2023a}. Orlovich et al. \cite{Orlovich2011} systematically investigated the complexity of dissociation set problems in graphs. Li, Tu et al.  \cite{li2026, {Tu2024},Tu2021} counted the maximum and second maximum numbers of dissociation sets. Note that a unicyclic graph is a connected graph whose number of edges equals its number of vertices. Zhou, Liu and Xiong \cite{zhao2025} showed \( \varphi(G) \geq \left\lceil \frac{2n}{3} \right\rceil \) for an  \( n \)-order unicyclic graph $G$ satisfying \( 3 \mid n \). We derive a general lower bound for the dissociation number of unicyclic graphs in this work.

\begin{theorem}\label{minU}
Let \( G \) be a unicyclic graph of order \( n \) with dissociation number \( \varphi(G) \). Then
\(
\left\lfloor \frac{2n}{3} \right\rfloor \le \varphi(G) \le n-1.
\)
\end{theorem}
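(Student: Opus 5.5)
The upper bound is immediate. A unicyclic graph contains a cycle $C$, and no vertex set containing all of $V(C)$ induces a subgraph of maximum degree at most $1$. Hence every dissociation set misses at least one vertex of $G$, so $\varphi(G)\le n-1$. The bound is attained, for instance, by $C_3$ with pendant vertices attached to a single cycle vertex: deleting that vertex leaves an induced matching plus isolated vertices.

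For the lower bound we reduce to trees. The key fact is that every tree $T$ of order $m$ satisfies $\varphi(T)\ge\lceil 2m/3\rceil$. We prove this by induction on $m$, using a longest path $v_0v_1\cdots v_k$ in $T$.
\begin{itemize}
\item If $k\le 2$, then $T$ is a star. Taking all its leaves, together with the centre when $m\le 2$, gives a dissociation set of size $m-1\ge\lceil 2m/3\rceil$ for $m\ge 3$. The cases $m\le 2$ are trivial.
\item If $k\ge 3$, then by maximality every child of $v_2$ is either a leaf or a vertex all of whose children are leaves.
\item If $v_1$ has at least two leaf children, let $T_1$ be the component of $T-v_1$ containing $v_2$, so that we remove $v_1$ and its $s\ge 2$ leaves. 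We add the leaves to a dissociation set of $T_1$. They are isolated, and $v_1$ is omitted, so we gain $s\ge \tfrac23(s+1)$.
\item If $v_1$ has exactly one leaf child $v_0$, remove $v_0,v_1,v_2$; the remainder is a forest, and induction applies to each component. We add $v_0,v_1$. These two vertices form an edge, and their only outside neighbour $v_2$ is omitted, so we gain $2$ out of $3$ vertices.
\end{itemize}
Since $\lceil\cdot\rceil$ is superadditive in the needed direction ($\lceil a\rceil+\lceil b\rceil\ge\lceil a+b\rceil$), the bound $\lceil 2m/3\rceil$ survives the passage to forests.

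Now let $G$ be unicyclic with cycle $C$, and pick any edge $e=uv$ of $C$. The graph $T=G-e$ is a spanning tree, so $\varphi(T)\ge\lceil 2n/3\rceil$. Let $S$ be a maximum dissociation set of $T$. If $S$ does not contain both $u$ and $v$, then $S$ is also a dissociation set of $G$. Otherwise, deleting $u$ from $S$ gives a dissociation set of $G$ of size at least $\lceil 2n/3\rceil-1\ge\lfloor 2n/3\rfloor$ whenever $3\nmid n$. When $3\mid n$, the cited result of Zhou, Liu and Xiong \cite{zhao2025} gives $\varphi(G)\ge 2n/3$ directly. So in all cases $\varphi(G)\ge\lfloor 2n/3\rfloor$.

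The only genuine work is the tree bound. If we want to avoid relying on \cite{zhao2025} in the case $3\mid n$, we need a more careful argument there. One option is to choose the edge $e$ on $C$ so that one endpoint is excluded from some maximum dissociation set of $T$. We would do this by running the induction from a longest path that ends near the cycle, so that the deleted vertex $v_2$ lies on $C$. Alternatively, we could contract the situation to a pure cycle $C_n$, where $\varphi(C_n)=\lfloor 2n/3\rfloor$, which also shows that the bound is sharp.
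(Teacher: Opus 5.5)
Your argument is correct, with the case $3\mid n$ resting on \cite{zhao2025} just as in the paper, but your route differs from the paper's.

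The paper handles $3\nmid n$ by induction on $n$ in two cases. If a pendant vertex $v$ has its neighbour $u$ off the cycle, it deletes $v$ alone. If $u$ lies on the cycle, it deletes $\{u,v\}$ and imports the tree bound $\psi_3(T)\le |V(T)|/3$ of \cite{Bre2011a}.

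You prove the tree bound $\varphi(T)\ge\lceil 2m/3\rceil$ yourself by the longest-path induction. That induction is sound, including the passage to forests via $\lceil a\rceil+\lceil b\rceil\ge\lceil a+b\rceil$. You then reduce the unicyclic case to a spanning tree in one step, losing at most one vertex.

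Your route is self-contained and needs no induction over unicyclic graphs. It also never uses the paper's inequality $\varphi(G)\ge\varphi(G-v)+1$ for a pendant vertex $v$ whose neighbour is off the cycle. That inequality does not follow from Lemma~\ref{LS(G)} and can fail. Take $C_3=abc$ with pendant paths $awuv$ and $aw'u'$, so $n=8$. Then $\varphi(G)=\varphi(G-v)=6$.

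Your speculative final paragraph is unnecessary, because a one-word change removes the citation entirely. Delete a cycle \emph{vertex} $z$ instead of a cycle edge. Then $G-z$ is a forest on $n-1$ vertices, and your forest bound gives $\varphi(G)\ge\varphi(G-z)\ge\lceil 2(n-1)/3\rceil$. One checks $\lceil 2(n-1)/3\rceil=\lfloor 2n/3\rfloor$ for every integer $n$. This settles the case $3\mid n$ without \cite{zhao2025}. Your "contract to $C_n$" remark, as stated, is not an argument and can simply be dropped.
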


The adjacency matrix of \( G \) is \( A(G) = (a_{ij}) \), where \( a_{ij} = 1 \), if \( v_i \) and \( v_j \) are adjacent, and \( a_{ij} = 0 \) otherwise. The eigenvalues of $G$ are the eigenvalues of its adjacency matrix $A(G)$. The largest eigenvalue of \( A(G) \) is called the spectral radius of \( G \) and denoted by \( \rho(G) \).  
The spectral radius serves as a core research object in spectral graph theory. A fundamental problem in extremal spectral theory is to characterize the graphs that achieve the maximum or minimum spectral radius over a given family of graphs, which is widely recognized as the Brualdi-Solheid problem. The spectral radius of the adjacency matrix determines the threshold of epidemic and information propagation on networks, where a smaller spectral radius effectively restrains diffusion behaviors. Ai, Kim et al. \cite{ai2025minimum,Kim2020,Kim2023} considered the minimal spectral radius of saturated graphs. We link graphs attaining the minimum spectral radius to the independence number and the dissociation number. 
There has been extensive research concerning the independence number.
We refer the reader to \cite{Choi2024,Du2013,Hu2025,Lou2022,Xu2009,xue2024}. As a generalization of the independence number, relatively few results regarding the dissociation number and its connections to graph invariants have been reported in the literature. Das and Mohanty \cite{das2024maximization} investigated the maximal spectral radius of block graphs with a prescribed dissociation number. 
In \cite{huang2026, Huang2024, zhao2025}, Huang, Zhao et al.  determined the graphs with the minimum spectral radius among all connected graphs with given order $n$ and dissociation number in $\{2, \left\lceil \frac{2n}{3} \right\rceil - 1, \left\lceil \frac{2n}{3} \right\rceil, n-3, n-2, n-1\}$. Focusing on unicyclic graphs, Cui et al. \cite{Cui2026} analyzed the graphs with the minimum spectral radius in unicyclic graphs with  domination number and independence number. Fan and Wu \cite{Fan2008} ranked unicyclic graphs by the spectral radius of unoriented Laplacian matrices.  Li, Wang and Zhao \cite{Li2011} characterized the signless Laplacian spectral radius of unicyclic and bicyclic graphs with prescribed girth. Zhang, Ren and Wang \cite{Zhang2026} investigated unicyclic graphs attaining the minimum number of maximal dissociation sets. In this paper, we  characterize the extremal graphs with given dissociation number $\varphi(G)$ that attain the minimal spectral radius among all unicyclic graphs. Furthermore, the
minimizing graphs are explicitly determined for $n-4 \leq \varphi(G) \leq n-1$.

We denote by $P_n$ the path graph on $n$ vertices.
The \emph{length} of $P_n$ is the number of edges in the path. Accordingly, a path of length $l$ (with $l$ edges) has $l+1$ vertices and is written as $P_{l+1}$. Let \(X = \{ w_1, w_2, \dots, w_k \}\subseteq V(G) \), for a sequence \( p_{\le l}(X)\!=\!p_{\le l}(p(w_1),p(w_2),\!\dots\!,p(w_k)) \) with integers \(l\ge 1,\, p(w_i) \ge 0 \). The graph \( \mathcal{G}^* = G \bullet p_{\le l}(X) \) is obtained by attaching \( p(w_i) \) disjoint paths of length at most $l$ (i.e., $P_{\le l+1}$ ) to each vertex \( w_i \) in \( X \). Every path is attached by identifying exactly one of its endpoints with
$w_i$. In particular,  \( \mathcal{G}^* = G \bullet p_{l}(X) \) is obtained by attaching \( p(w_i) \) disjoint paths of length equal to $l$ (i.e., $P_{ l+1}$ ) to each vertex \( w_i \) in \( X \). If \( p(w_{i_1})  = \dots = p(w_{i_t}) =p(w_1)>p(w_{i_{t+1}}) = \dots = p(w_{i_k})=p(w_k) \) with \( 0 \le t \le k \), then we write \( \mathcal{G}^* = G \bullet p_l \left( (p(w_k))^{[k-t]}, (p(w_1))^{[t]} \right) \). 
The 2-subdivision graph \( S_2(G) \) of \( G \) is obtained by subdividing each edge of \( G \) twice with two new vertices, and these added vertices are called subdivision vertices.  Let   $X=V(C_k)$ and \( \left\lfloor \frac{n-3k}{2} \right\rfloor \equiv r \pmod{k} \) with integer $0\le r \le k-1$. If 
\(\varphi\) is even, let $\mathcal{G}_1^* = S_2(C_k) \bullet p_2\left(
\left( \frac{n-2r-k}{2k} - 1 \right)^{[k-r]},\right.
\left. \left( \frac{n-2r-k}{2k} \right)^{[r]}
\right)$, see Figure 1 (a).
If $\varphi$ is odd, let \(\mathcal{G}^*_2=S_2(C_k) \bullet p_2\left( \left( \frac{n-2r-k-1}{2k} - 1 \right)_1^{{[k-r]}}  ,\left( \frac{n-2r-k-1}{2k} \right)^{{[r]}}\right)\), this graph is constructed by attaching one pendant  vertex (denoted $u$) by a single edge to an arbitrary vertex $w_{ik}\in V(C_k)$ with fewer incident pendant paths,  
see Figure 1 (b).
\begin{figure}
    \centering
\includegraphics[width=0.4\linewidth]{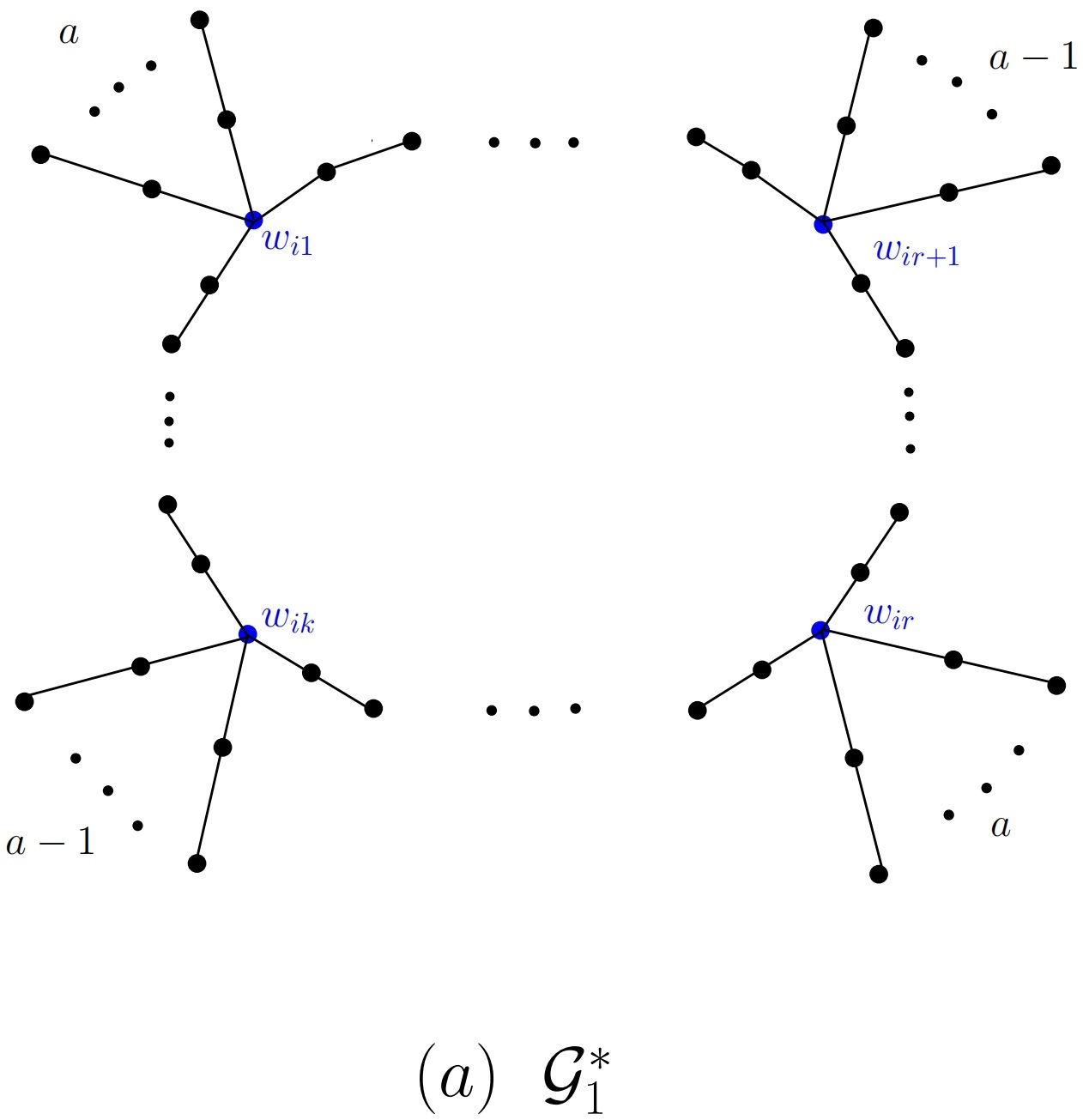}
\includegraphics[width=0.4\linewidth]{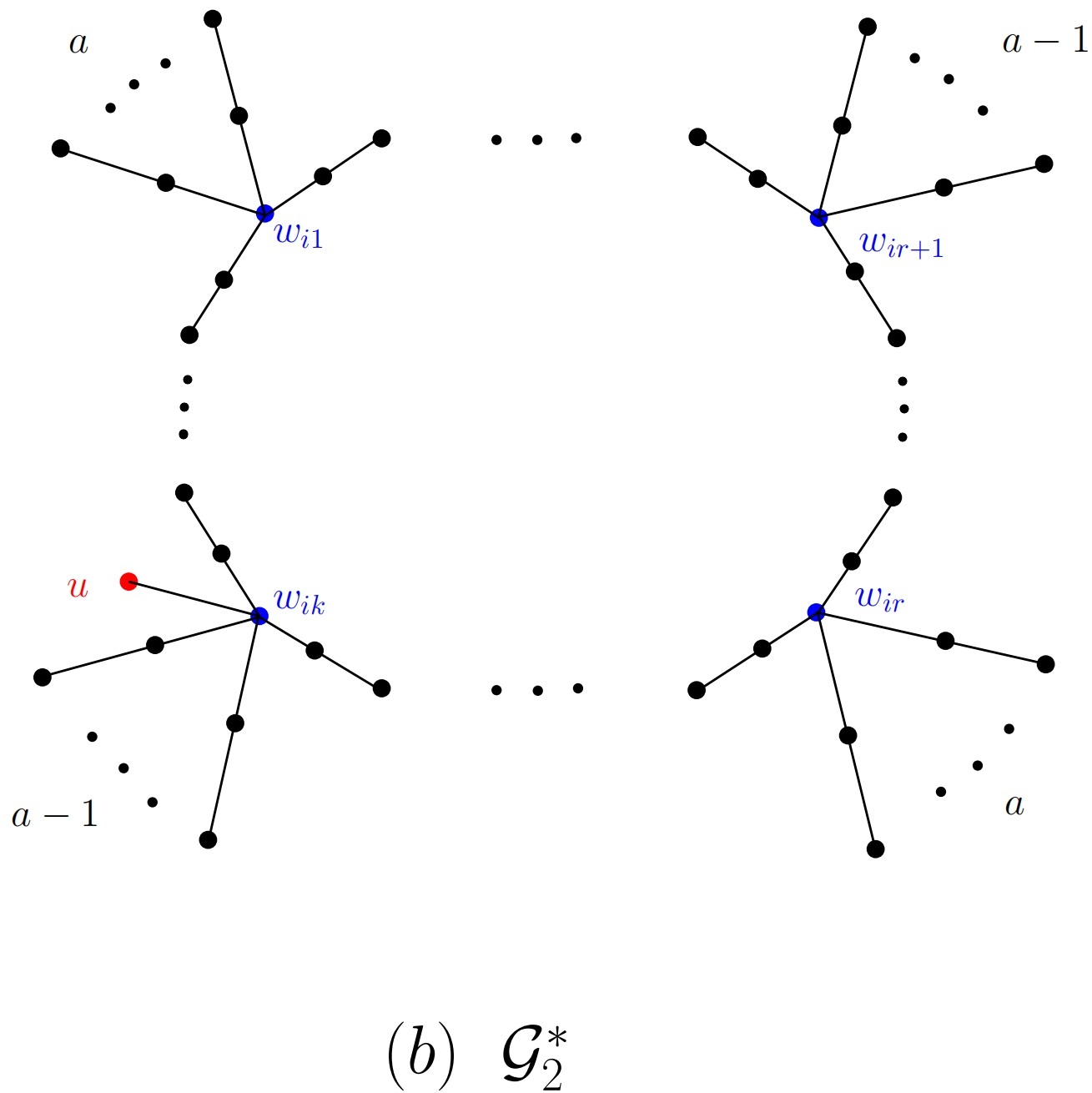}
        \caption{ graphs $\mathcal{G}_1^* \cup \mathcal{G}_2^*$, where~\(a=\left\lfloor\frac{n-2r-k}{2k} \right\rfloor\).}
\end{figure}

For integers $n,\varphi$, let $\mathcal{U}_{n,\varphi}$ be the set of unicyclic graphs with order $n$ and dissociation number $\varphi$. For any two vertices $u,v\in V(G)$, the distance $\mathrm{dist}(u,v)$ between $u$ and $v$
is defined as the number of edges on a shortest path connecting $u$ and $v$.
The following theorems characterize the structure of the minimizer graphs in the class
$\mathcal{U}_{n,\varphi}$ for $ \varphi(G) \leq n-1$.

\begin{theorem}\label{T2}
Suppose that \( G^* \) is the minimizer graph in \( \mathcal{U}_{n,\varphi} \).
Then we have

(i). When $\varphi = n-1$, let $X=\{w_i\}\subset V(C_3)$. Then 
\[
G^* =
\begin{cases}
C_3\bullet p_2\big(\frac{n-3}{2}\big), & \varphi \text{ is even};\\[5pt]
C_3\bullet p_2\big(\left\lfloor \frac{n-3}{2}\right\rfloor_1\big), & \varphi \text{ is odd}.
\end{cases}
\]

(ii). When $\varphi = n-2$, let $X=\{w_i, w_j\}\subset V(C_6)$ such that $dist(w_i, w_j)=3$. Then  
\[
G^* =
\begin{cases}
C_6\bullet p_2\big(\left\lfloor \frac{n-6}{4}\right\rfloor,\left\lceil \frac{n-6}{4}\right\rceil\big), & \varphi \text{ is even};\\[5pt]
C_6\bullet p_2\big(\left\lfloor \frac{n-6}{4}\right\rfloor_1,\left\lceil \frac{n-6}{4}\right\rceil\big), & \varphi \text{ is odd}.
\end{cases}
\]
\end{theorem}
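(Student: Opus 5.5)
The plan is to reduce each case to a structural description of unicyclic graphs with $\varphi(G)\ge n-2$, and then to minimize $\rho$ inside that family with standard spectral tools. The tools are: (a) interlacing and monotonicity of $\rho$ under taking proper subgraphs; (b) the fact that among trees hanging at a vertex, replacing a branch by paths of length $2$ (so that every vertex of the branch remains in a dissociation set) does not increase $\rho$; and (c) the path-balancing lemma. That lemma says: if $w$ carries pendant paths $P_{a}$ and $P_{b}$ with $a\ge b+2$, then moving a vertex from the longer to the shorter path strictly decreases $\rho$. The same holds when comparing the numbers of pendant $P_3$'s at two symmetric vertices of the cycle. The analogous transfer statement for unequal numbers of pendant $P_3$'s at two vertices is proved via the Rayleigh quotient with the Perron vector, using symmetry of the cycle.

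For (i), $\varphi=n-1$ means some vertex $x$ has $G-x$ inducing a graph with maximum degree at most $1$. So $G-x$ is a matching plus isolated vertices. Since $G$ is unicyclic, exactly one edge $yz$ of that matching, together with $x$, forms the cycle, which must be $C_3$. Every other component of $G-x$ is a $K_1$ or $K_2$ attached to $x$ by exactly one edge, and in the $K_2$ case via an endpoint. Hence $G=C_3\bullet p_{\le 2}(\{x\})$, with $s$ pendant $P_3$'s and $t$ pendant edges, where $2s+t=n-3$. Comparing $\rho$ among these graphs, I would show that two pendant edges at $x$ should be merged into one pendant $P_3$, which keeps $n$ and $\varphi$ and lowers $\rho$. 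One way is to compare characteristic polynomials: with $\lambda=\rho$, the branch contributions at $x$ are $1/\lambda$ per pendant edge and $\lambda/(\lambda^2-1)$ per pendant $P_3$. Two edges contribute $2/\lambda$, which exceeds $\lambda/(\lambda^2-1)$ for $\lambda>\sqrt2$. Hence $t\le1$, and the parity of $n-3$ decides $t$. This gives the two formulas of (i), with $\varphi=n-1$ even iff $n-3$ is even.

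For (ii), $\varphi=n-2$ means there are $x,x'$ with $G-\{x,x'\}$ of maximum degree at most $1$, but no single such vertex exists. The first step is a case analysis on the cycle $C$: if $C$ hits only one of $x,x'$ we are back in a triangle-type configuration; otherwise the cycle passes through both, and its remaining vertices form $K_1$/$K_2$ pieces. The possible cycles are $C_3,C_4,C_5,C_6$, with $x,x'$ at the indicated positions. In each configuration the pendant parts are again $P_2$'s and $P_3$'s attached at $x$ or $x'$; longer attachments are forbidden by the dissociation condition. The second step uses the branch comparison above to reduce to pendant $P_3$'s plus at most one pendant edge. The third step compares cycle types. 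With the same pendant load, the configuration on $C_6$ with $\mathrm{dist}(x,x')=3$ should be minimal, because the two heavy vertices are as far apart as possible and cycle vertices of degree $2$ are maximally "spread". I would prove this by showing that each smaller-cycle configuration contains, or is spectrally dominated via edge rotation/subdivision of an internal path, by the $C_6$ one. Here I would use the Hoffman–Smith subdivision lemma on the internal path of the cycle between $x$ and $x'$. The last step balances the numbers of $P_3$'s at $x$ and $x'$ to $\lfloor\frac{n-6}{4}\rfloor,\lceil\frac{n-6}{4}\rceil$, using the symmetry of $C_6$ and a Perron-vector transfer argument. The extra pendant edge for odd $\varphi$ is placed at the vertex carrying fewer paths. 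I must also check that the resulting graph genuinely has $\varphi=n-2$ and not $n-1$; this holds when both $x,x'$ carry paths.

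The main obstacle is the third step: comparing different cycle lengths, and in particular showing that $C_6$ beats $C_4$/$C_5$ configurations with the same $n$. There the graphs are not subgraphs of one another. I would first try the subdivision of internal-path edges, but these graphs do not lie on internal paths in the Hoffman–Smith sense. So the fallback is an explicit characteristic-polynomial comparison via the quotient (equitable) matrix of the symmetric configuration, evaluated at the larger root.
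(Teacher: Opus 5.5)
Part (i) is correct, and your route is more self-contained than the paper's (which specializes $k=1$ in the general analysis of Claims~\ref{1} and~\ref{W}). You describe all of $\mathcal{U}_{n,n-1}$ explicitly, and Lemma~\ref{LG(s,t)} with $s=t=1$ makes your merging step rigorous.

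Part (ii), however, has a genuine gap exactly at the step you flag. You never show that the configurations on $C_3$, $C_4$ or $C_5$, or those with $x'$ off the triangle, are beaten by the antipodal $C_6$ one. Your reason for dropping subdivision is also false. In a unicyclic graph other than $C_n$, every cycle edge lies on an internal path: either the closed one from a branching cycle vertex back to itself, or a segment between consecutive branching cycle vertices. So Lemma~\ref{G_w} applies.

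The missing idea is the paper's Claim~\ref{W}. Subdivide cycle edges and delete the same number of pendant vertices, so that $n$ is unchanged and $\rho$ strictly drops by Lemmas~\ref{G_w} and~\ref{G}(i). For example:
\begin{itemize}
\item turn $C_5=xabx'c$ into $C_6$ by subdividing $cx$;
\item turn $C_4=xax'b$ into $C_6$ by subdividing $xa$ and $xb$;
\item turn a triangle $xx'a$ into $C_6$ by subdividing $xx'$ twice and $ax$ once.
\end{itemize}
In each case $x,x'$ end up at distance $3$, and $G-\{x,x'\}$ still has maximum degree at most $1$. Since $C_q-v\supseteq P_3$ for $q\ge 4$, the new graph stays in $\mathcal{U}_{n,n-2}$.

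This only works if the pendant vertices you delete are first those hanging at cycle vertices other than $x,x'$ (at $c$, or at $a$, $b$). Such pendant vertices are allowed by the dissociation condition, and your step~1 overlooks them. The configurations with $x'$ off the cycle need the separate tree-reshaping of Claim~\ref{1}, which your plan does not address at all. Your fallback cannot substitute for any of this: equitable quotient matrices exist only for symmetric, balanced loads, whereas minimality must be proved against every member of $\mathcal{U}_{n,n-2}$, including unbalanced and asymmetric ones.

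Two smaller points. First, Lemma~\ref{LG(s,t)} merges pendant edges only at a common vertex. So excluding one pendant edge at each of $x$ and $x'$ needs its own comparison, and so does placing the odd edge at the lighter vertex. Second, for odd $\varphi$ the balanced counts are $\lfloor\frac{n-7}{4}\rfloor,\lceil\frac{n-7}{4}\rceil$. With $\frac{n-6}{4}$ as printed, the order comes out as $n+2$ when $n\equiv 3\pmod 4$, so do not aim your balancing step at those numbers.
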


\begin{theorem}\label{T3}
Suppose that \( G^* \) is the minimizer graph in \( \mathcal{U}_{n,\varphi} \), where  $\varphi \le n-3.$
Let \( k = n - \varphi \), \( \left\lfloor \frac{n-3k}{2} \right\rfloor \equiv r \pmod{k} \) and 
let $X=V(C_k)$. Then
\[
G^* \in
\begin{cases}
S_2(C_k) \bullet p_2\left( \left( \frac{n-2r-k}{2k} - 1 \right)^{{[k-r]}} ,\left( \frac{n-2r-k}{2k} \right)^{{[r]}}\right),  &\text{ when}~ {\varphi} \text{ is even};\\[5pt]
S_2(C_k) \bullet p_2\left( \left( \frac{n-2r-k-1}{2k} - 1 \right)^{{[k-r]}}_{1}  ,\left( \frac{n-2r-k-1}{2k} \right)^{{[r]}}\right), &\text{ when}~{\varphi} \text{ is odd.}
\end{cases}
\]

\end{theorem}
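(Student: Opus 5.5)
The plan is to show that an extremal graph must be a ``balanced spider-like'' unicyclic graph whose cycle carries exactly the deletion structure forced by $k=n-\varphi$. We proceed in four steps: a structural lower bound on the cycle, graft transformations along pendant trees, balancing of the path counts, and identification of the extremal graph.

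\textbf{Step 1: the cycle.} Let $G^*\in\mathcal{U}_{n,\varphi}$ be a minimizer with unique cycle $C_g$, and let $S$ be a maximum dissociation set, so that $|V\setminus S|=k$. First we compare with the candidate graph $\mathcal{G}^*_1$ (resp.\ $\mathcal{G}^*_2$). Its cycle is $S_2(C_k)$ of length $3k$, with paths $P_3$ attached at the $k$ original vertices. Deleting those $k$ vertices leaves only copies of $P_2$ and $P_1$, so its dissociation number is at least $n-k$. A counting argument shows that no smaller deletion set works, since each attached $P_3$ branch together with a cycle segment forces one deletion. Hence the candidate lies in $\mathcal{U}_{n,\varphi}$. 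Because $\rho(G^*)\le\rho(\mathcal{G}^*_i)<2.1$ for large $n$ (a quantity computed by a quotient/characteristic polynomial estimate), $G^*$ contains no vertex of degree $\ge 4$ that is not the center of a ``thin'' structure. More precisely, we use the classical Hoffman--Smith fact that subdividing an internal path lowers $\rho$, together with the known bound $\rho\ge 2$ for unicyclic graphs.

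\textbf{Step 2: reduce pendant trees to pendant $P_3$'s.} For a tree $T$ hanging at a cycle vertex, we use the Li--Feng grafting lemma: moving branches to balance path lengths decreases $\rho$. With the Hoffman--Smith subdivision lemma, every tree can be transformed into paths of length at most $2$ attached at cycle vertices. These operations keep $n$ fixed and do not decrease $n-\varphi$; re-inserting deleted vertices by subdividing the cycle, which again lowers $\rho$, restores the value $n-\varphi=k$ exactly. The key invariant is this: in a graph $C_g\bullet p_{\le 2}(X)$, a vertex of $X$ carrying a $P_3$ branch must lie in every minimum deletion set, essentially. A single pendant vertex costs nothing. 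Each cycle arc between consecutive deleted vertices has length at most $2$ in $S$, so $g\le 3k$. To minimize $\rho$ we want the cycle as long as possible, since it is an internal path. Thus $g=3k$ and the deleted vertices are equally spaced, which is exactly $S_2(C_k)$ with $X=V(C_k)$. Converting paths of length $1$ into length $2$ (two pendant edges merged into one $P_3$, a grafting step) leaves at most one extra pendant vertex $u$; this happens precisely when the parity of $n-3k$ forces it, i.e.\ when $\varphi$ is odd.

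\textbf{Step 3: balancing.} With $n-3k$ (or $n-3k-1$) vertices distributed as $P_3$ branches over the $k$ vertices of $X$, let two vertices of $X$ carry $p_i\ge p_j+2$ branches. Using the symmetry of $S_2(C_k)$ and a Perron-vector argument, we move one branch from $w_i$ to $w_j$; this strictly decreases $\rho$ because $x_{w_i}>x_{w_j}$ follows from $p_i>p_j$ by an eigen-equation comparison. Hence the counts differ by at most $1$, giving the multiplicities $(\frac{n-2r-k}{2k}-1)^{[k-r]}$ and $(\frac{n-2r-k}{2k})^{[r]}$ with $r\equiv\lfloor\frac{n-3k}{2}\rfloor \pmod k$. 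The extra pendant vertex is attached to a vertex with fewer branches, again by Perron-vector comparison. The statement only claims membership ``$G^*\in$'', so the placement of the heavier vertices on the cycle need not be determined.

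\textbf{Main obstacle.} The hard part is Step 2's claim that the transformations preserve $n-\varphi=k$ while lowering $\rho$. Grafting can change the dissociation number. I would handle this by first proving a lemma that any $G\in\mathcal{U}_{n,\varphi}$ with $\rho(G)<\rho(\mathcal{G}^*_i)$ has maximum degree at most $3$ off the branch centers and all pendant trees of depth $\le 2$. I would then argue directly on the deletion set: each deleted vertex ``pays for'' at most three cycle vertices, and this forces $g\le 3k$, with equality only for the stated shape.
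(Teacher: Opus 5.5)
Your outline (reduce to $C_q\bullet p_{\le 2}$, force $q=3k$, settle parity, then balance) is the same skeleton as the paper's. However, the balancing step fails as written, and the reductions rest on a false bound.

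\textbf{Balancing (Step 3).} You take the Perron vector $x$ of the unbalanced graph $G$ and infer from $x_{w_i}>x_{w_j}$ that moving a branch (first vertex $b$) from $w_i$ to $w_j$ lowers $\rho$. This argument runs the wrong way.

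For the new graph $G'$, the Rayleigh quotient only gives the lower bound $\rho(G')\ge x^{T}A(G')x=\rho(G)-2x_b(x_{w_i}-x_{w_j})$. Lemma \ref{N(u)} only says that moving branches \emph{toward} the larger entry raises $\rho$. To conclude $\rho(G')<\rho(G)$, you would need $x'_{w_i}\ge x'_{w_j}$ for the Perron vector $x'$ of the balanced graph $G'$. That does not follow from comparing loads, because Perron entries also depend on the loads at the other vertices of $X$ and on their positions around the cycle. Theorem \ref{P2} shows that positions genuinely change $\rho$. The same objection applies to your placement of the extra pendant vertex.

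The paper avoids Perron vectors here. In Lemma \ref{G(m,n)}, both graphs are a common graph $\Gamma_0$ plus one pendant path at $w_i$ or at $w_j$. The cut-edge recurrence (Lemma \ref{P(G)}) then reduces the comparison to $P(\Gamma_0-w_i,\lambda)$ versus $P(\Gamma_0-w_j,\lambda)$, and the symmetry $G-w_i\cong G-w_j$ turns this into a subgraph comparison. The odd case is handled by attaching a pendant vertex, which lands in $\mathcal{U}_{n+1,\varphi+1}$ and so reduces to the even case, and then comparing characteristic polynomials again.

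\textbf{Reductions (Steps 1--2).} Step 1 rests on a false bound, and Step 2 leaves your own ``main obstacle'' open. The bound $\rho(\mathcal{G}^*_i)<2.1$ is wrong: for fixed $k$, each $w_i$ carries about $(n-3k)/(2k)$ pendant $P_3$'s, so $\rho(\mathcal{G}^*_i)\ge\sqrt{\Delta}$ is unbounded. Comparison with $\rho(\mathcal{G}^*_i)$ therefore yields no degree or depth restriction, and the lemma you propose for the obstacle has no basis.

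What is needed is to choose every move so that $n$ and $\varphi$ stay exactly fixed, which is what Claims \ref{1} and \ref{W} do. First, grafts move only a segment chosen according to $t \bmod 3$. Second, excess pendant length is converted into cycle length $3s'$ vertices at a time, shifting a contribution $2s'$ to $\varphi$ from the path to the cycle. Third, next to an excluded cycle vertex, a cycle edge is subdivided once or twice while the same number of pendant vertices are deleted. Lemma \ref{LS(G)} tracks the optimal dissociation set, and Lemmas \ref{G}(i) and \ref{G_w} give the strict decrease. Your remark about re-inserting vertices by subdividing the cycle points in this direction, but it never names a move that keeps both $n$ and $\varphi$ fixed.

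Two smaller points. Merging pendant edges at two different cycle vertices is not a Li--Feng graft, since Lemma \ref{LG(s,t)} needs a common vertex. The paper's Fact \ref{fact} instead uses the Perron vector of the merged, smaller-$\rho$ graph, which is the direction Lemma \ref{N(u)} supports. Also, the candidate lies in $\mathcal{U}_{n,n-k}$ simply because $\varphi(C_{3k})=2k$, not by counting $P_3$-branches, since some $w_i$ may carry none.
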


From Theorem \ref{T3}, we obtain the following result immediately.

\begin{theorem}\label{T4}
Suppose that \(G^* \) is the minimizer graph in \( \mathcal{U}_{n, n-3} \) and \( \left\lfloor \frac{n-9}{2} \right\rfloor \equiv r \pmod{3} \), where $0 \leq r \leq 2$. Let $X=V(C_3)$. Then we have the following statements.

(i). If \( n-3 \) is even, then
\[
G^* =
\begin{cases}
S_2(C_3) \bullet p_2\left( \left( \frac{n-9}{6} \right)^{[3]} \right), & \text{when } r = 0; \\
S_2(C_3) \bullet p_2\left( \left( \frac{n-11}{6} \right)^{[2]} , \left( \frac{n-5}{6} \right)^{[1]} \right), & \text{when } r = 1; \\
S_2(C_3) \bullet p_2\left( \left( \frac{n-13}{6} \right)^{[1]} , \left( \frac{n-7}{6} \right)^{[2]} \right), & \text{when } r = 2.
\end{cases}
\]

(ii). If \( n-3\) is odd, then
\[
G^* =
\begin{cases}
S_2(C_3) \bullet p_2\left( \left( \frac{n-10}{6} \right)^{[3]}_1 \right), & \text{when } r = 0; \\
S_2(C_3) \bullet p_2\left( \left( \frac{n-12}{6} \right)^{[2]}_1 , \left( \frac{n-6}{6} \right)^{[1]} \right), & \text{when } r = 1; \\
S_2(C_3) \bullet p_2\left( \left( \frac{n-14}{6} \right)^{[1]}_1 , \left( \frac{n-8}{6} \right)^{[2]} \right), & \text{when } r = 2.
\end{cases}
\]
\end{theorem}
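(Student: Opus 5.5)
This is a direct specialization of Theorem \ref{T3} to $\varphi = n-3$. Set $k = n-\varphi = 3$ in Theorem \ref{T3}. Then $X = V(C_3)$, the relevant residue is $\left\lfloor \frac{n-9}{2}\right\rfloor \equiv r \pmod 3$ with $0\le r\le 2$, and the minimizer is the 2-subdivision $S_2(C_3)$, which is a $9$-cycle, with pendant $P_3$'s attached at the three original vertices. The plan is to compute the attachment numbers in Theorem \ref{T3} for each value of $r$ and each parity of $\varphi = n-3$, and then rewrite them in the form stated.

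\emph{Even case.} Suppose $n-3$ is even, so $n$ is odd and $\frac{n-9}{2}$ is an integer; hence $r \equiv \frac{n-9}{2} \pmod 3$. The two attachment numbers in Theorem \ref{T3} are
\[
\frac{n-2r-3}{6}-1=\frac{n-2r-9}{6}\quad\text{and}\quad\frac{n-2r-3}{6}.
\]
These are integers because $n-9-2r \equiv 0 \pmod 6$. They are used with multiplicities $3-r$ and $r$ respectively. Substituting gives:
\begin{itemize}
\item $r=0$: $\left(\frac{n-9}{6}\right)^{[3]}$, and the term with multiplicity $0$ disappears;
\item $r=1$: $\left(\frac{n-11}{6}\right)^{[2]}$ together with $\left(\frac{n-5}{6}\right)^{[1]}$;
\item $r=2$: $\left(\frac{n-13}{6}\right)^{[1]}$ together with $\left(\frac{n-7}{6}\right)^{[2]}$.
\end{itemize}
This is exactly part (i).

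\emph{Odd case.} Suppose $n-3$ is odd, so $n$ is even and $\left\lfloor\frac{n-9}{2}\right\rfloor = \frac{n-10}{2}$. The attachment numbers in Theorem \ref{T3} become $\frac{n-2r-10}{6}$ and $\frac{n-2r-4}{6}$. One extra pendant vertex is attached at a vertex receiving the smaller number; this is the subscript ${}_1$. Substituting gives:
\begin{itemize}
\item $r=0$: $\left(\frac{n-10}{6}\right)_1^{[3]}$;
\item $r=1$: $\left(\frac{n-12}{6}\right)_1^{[2]}$ together with $\left(\frac{n-6}{6}\right)^{[1]}$;
\item $r=2$: $\left(\frac{n-14}{6}\right)_1^{[1]}$ together with $\left(\frac{n-8}{6}\right)^{[2]}$.
\end{itemize}
This matches part (ii).

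The only point needing care is uniqueness. Theorem \ref{T3} only gives membership $G^*\in\cdots$, whereas here we claim equality. For $k=3$, $\mathrm{Aut}(C_3)$ acts transitively on the vertices and on the 2-subsets of $V(C_3)$. Hence every choice of which $r$ vertices carry the larger number of paths gives isomorphic graphs. In the odd case, every choice of the deficient vertex that receives the extra pendant vertex is also equivalent up to isomorphism. So the family collapses to a single graph and "$\in$" may be replaced by "$=$". There is no substantive obstacle beyond this bookkeeping of the floor and residue.
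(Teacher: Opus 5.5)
Your proposal is correct and follows the same route as the paper, which obtains this result immediately by setting $k=3$ in Theorem~\ref{T3}; your substitutions for both parities and all three residues $r$ check out. Your observation that every permutation of $V(C_3)$ is induced by an automorphism of $S_2(C_3)$ is what justifies replacing ``$\in$'' by ``$=$'', since it makes every admissible placement of the larger path counts and of the extra pendant vertex give isomorphic graphs; the paper leaves this step implicit.
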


\vskip 0.2in
Suppose that \( \left\lfloor \frac{n-2r-k}{2k} \right\rfloor\) disjoint pendant  $P_3$-paths are attached to \( w_{i_1}, \dots, w_{i_{r}} \) in $G$,  respectively. If
$\operatorname{dist}(w_{i_s}, w_{i_{s}+1}) \in \left\{ 3\left\lceil \frac{k}{r} \right\rceil, 3\left\lfloor \frac{k}{r} \right\rfloor \right\}$
for \( 1 \leq s \leq r - 1 \), then
\[
G =
\begin{cases}
S_2(C_k) \bullet \mathfrak{b} p_2\left( \left( \frac{n-2r-k}{2k} - 1 \right)^{{[k-r]}} ,\left( \frac{n-2r-k}{2k} \right)^{{[r]}}\right), &\text{when } {(  n-k)} \text{ is even;} \\
S_2(C_k) \bullet \mathfrak{b} p_2\left( \left( \frac{n-2r-k-1}{2k} - 1 \right)^{{[k-r]}}_{1}  ,\left( \frac{n-2r-k-1}{2k} \right)^{{[r]}}\right),&\text{when } {(  n-k)} \text{ is odd.}
\end{cases}
\]
\vskip 0.2in

Using Theorem \ref{T3}, we can precisely characterize the minimizing graph in $\mathcal{U}_{n,n-4}$ as follows.
\begin{theorem}\label{P2}
Suppose that \( G^* \in \mathcal{U}_{n, n-4} \) is the minimizer graph. 
Let $X=V(C_4)$ and \( \left\lfloor \frac{n-12}{2} \right\rfloor \equiv r \pmod{4} \), where $0 \leq r \leq 3$. Then we have the following statements.

(i). If \( n-4 \) is even, then

\[
G^* =
\begin{cases}
S_2(C_4) \bullet p_2\left( \left( \frac{n-12}{8} \right)^{[4]} \right), &\text{when } r = 0; \\[8pt]
S_2(C_4) \bullet p_2\left( \left( \frac{n-14}{8} \right)^{[3]}, \left( \frac{n-6}{8} \right)^{[1]} \right), & \text{when }r = 1; \\[8pt]
S_2(C_4) \bullet \mathfrak{b} p_2\left( \left( \frac{n-16}{8} \right)^{[2]}, \left( \frac{n-8}{8} \right)^{[2]} \right), & \text{when }r = 2; \\[8pt]
S_2(C_4) \bullet p_2\left( \left( \frac{n-18}{8} \right)^{[1]}, \left( \frac{n-10}{8} \right)^{[3]} \right), &\text{when } r = 3.
\end{cases}
\]
\vskip 0.15in
(ii). If \( n-4 \) is odd, then 
\[
G^* =
\begin{cases}
S_2(C_4) \bullet p_2\left( \left( \frac{n-13}{8} \right)^{[4]}_1 \right), &\text{when } r = 0; \\[8pt]
S_2(C_4) \bullet \mathfrak{b} p_2\left( \left( \frac{n-15}{8} \right)^{[3]}_1, \left( \frac{n-7}{8} \right)^{[1]} \right), & \text{when }r = 1; \\[8pt]
S_2(C_4) \bullet \mathfrak{b} p_2\left( \left( \frac{n-17}{8} \right)^{[2]}_1, \left( \frac{n-9}{8} \right)^{[2]} \right), & \text{when }r = 2; \\[8pt]
S_2(C_4) \bullet p_2\left( \left( \frac{n-19}{8} \right)^{[1]}_1, \left( \frac{n-11}{8} \right)^{[3]} \right), & \text{when }r = 3.
\end{cases}
\]
\end{theorem}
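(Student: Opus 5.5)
This is the case $k=4$ of Theorem \ref{T3}, so the plan is to specialize that theorem and then settle the placement of the heavier vertices using the $\mathfrak{b}$-balancing remark that follows Theorem \ref{T4}. Since $\varphi=n-4\le n-3$, Theorem \ref{T3} applies with $k=n-\varphi=4$. It gives that the minimizer is $S_2(C_4)$ with pendant $P_3$'s attached to the four original cycle vertices $X=V(C_4)$. Here $\lfloor (n-12)/2\rfloor \equiv r \pmod 4$, so $r$ vertices receive one more path than the remaining $4-r$. When $n-4$ is odd, one extra pendant vertex is attached to a lighter vertex.

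First I would do the arithmetic for each residue class. For $n$ even and $r\in\{0,1,2,3\}$, substitute $k=4$ into $\frac{n-2r-k}{2k}-1=\frac{n-2r-12}{8}$ and $\frac{n-2r-4}{8}$. This yields $\frac{n-12}{8}$, then $\frac{n-14}{8},\frac{n-6}{8}$, then $\frac{n-16}{8},\frac{n-8}{8}$, then $\frac{n-18}{8},\frac{n-10}{8}$, with multiplicities $[4-r]$ and $[r]$. For $n$ odd, replace $n$ by $n-1$ to get the listed values, with the subscript $1$ marking the extra pendant vertex. As a check, $8\cdot(\text{base})+2r+12=n$ (or $n-1$) recovers the vertex count: $12$ vertices for $S_2(C_4)$ and $2$ for each $P_3$.

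The remaining content is where the heavier vertices sit on $C_4$, since Theorem \ref{T3} only asserts membership in a family.
\begin{itemize}
\item For $r=0$ or $r=3$ all positions are equivalent up to symmetry of $S_2(C_4)$, so no choice remains.
\item For $r=1$ in the even case, the single heavier vertex is likewise unique up to automorphism.
\item For $r=2$ (both parities), there are two configurations: heavier vertices adjacent, at distance $3$ in $S_2(C_4)$, or opposite, at distance $6$. I would show the opposite (balanced, $\mathfrak{b}$) one has smaller spectral radius.
\item For $r=1$ in the odd case, the extra pendant vertex goes on one of the three lighter vertices. These are either adjacent to or opposite the heavy vertex, and again the $\mathfrak{b}$ choice must be justified.
\end{itemize}

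To compare configurations I would use the standard tools for minimizing $\rho$: a characteristic-polynomial comparison, or Li–Feng type transformations that move a pendant path between two vertices of a cycle. Concretely, write $\phi(G)$ via the recursion $\phi(G)=\phi(G-uv)-\phi(G-u-v)$ on a cycle edge. Then show that the difference of the polynomials of the two configurations has a definite sign for $x\ge\rho$. Equivalently, compare the Perron vector entries at the candidate attachment vertices: the vertex farther from the heavy branches has a smaller Perron entry, so attaching there lowers $\rho$. I expect this comparison for $r=2$ and for the odd $r=1$ case to be the main obstacle. There I would first try to exploit the reflection symmetry of $S_2(C_4)$ to reduce to comparing two rooted subgraphs, using the fact that the path $P_4$ between consecutive cycle vertices lengthens effective distance. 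This matches the distance condition $\{3\lceil k/r\rceil,3\lfloor k/r\rfloor\}$ in the remark, which for $k=4,r=2$ forces distance $6$.
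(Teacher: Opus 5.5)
Your reduction matches the paper's. You specialize Theorem~\ref{T3} to $k=4$, check the arithmetic, and note that the cases $r\in\{0,1,3\}$ ($n$ even) and $r\in\{0,3\}$ ($n$ odd) are unique up to the dihedral symmetry of $S_2(C_4)$. That leaves the three two-candidate comparisons (even $r=2$, odd $r=1$, odd $r=2$), which are the paper's pairs $G_1',G_1''$, $G_2',G_2''$, $G_3',G_3''$. \textbf{The gap is that these comparisons are the whole content of the theorem beyond Theorem~\ref{T3}, and you do not carry out any of them.} You list candidate tools and say you expect this to be the main obstacle. The paragraph after Theorem~\ref{T4} only \emph{defines} the notation $\mathfrak{b}$ and proves nothing, so its distance condition cannot settle the placement.

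The paper closes these cases by a global computation. For each pair it uses the equitable partition of $V(G)$ into the twelve vertices of $S_2(C_4)$, the extra pendant vertex $u$ (odd case), and, for each $w_i$, the set of first and the set of second vertices of its pendant $P_3$'s. It computes both quotient characteristic polynomials symbolically in $a$. The differences $P(B_\Pi(G''),\lambda)-P(B_\Pi(G'),\lambda)$ are $\lambda^2(\lambda^2-1)^4$, $\lambda(\lambda^2-1)^5$ and $\lambda(\lambda^2-1)^4$, all positive for $\lambda>1$. Lemmas~\ref{B} and~\ref{G}(ii) then show the opposite placement has strictly smaller spectral radius.

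The tools you name would not do the job as stated.
\begin{itemize}
\item \textbf{Recursion.} $\phi(G)=\phi(G-uv)-\phi(G-u-v)$ is Lemma~\ref{P(G)} and holds only for a cut edge. For an edge of the cycle $C$, Schwenk's formula has the extra term $-2\phi(G-V(C))$. That term cancels between two configurations here, but you are then left comparing two different trees, which is the same difficulty again.
\item \textbf{Perron vector.} The route via Lemma~\ref{N(u)} points the wrong way. In the opposite configuration $O=S_2(C_4)\bullet p_2(b,b+1,b,b+1)$, every single branch move producing the adjacent configuration takes a $P_3$ from a heavy vertex $h$ to a light vertex $\ell$. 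Lemma~\ref{N(u)} would then need $x_\ell\ge x_h$ in the Perron vector of $O$, but the reverse holds:
\begin{itemize}
\item Each pendant $P_3$ at $w$ contributes $\frac{\rho}{\rho^2-1}x_w$ to $\rho x_w$. Hence $c_wx_w=x_z+x_{z'}$ with $c_w=\rho-m_w\frac{\rho}{\rho^2-1}$, where $z,z'$ are the cycle neighbours of $w$ and $m_w$ is its number of pendant $P_3$'s.
\item The symmetry of $O$ and the two eigen-equations on a segment $h\,z\,z'\,\ell$ give $(c_h-\frac{2}{\rho+1})x_h=(c_\ell-\frac{2}{\rho+1})x_\ell$.
\item Here $c_h<c_\ell$, and $c_h>\frac{2}{\rho}$ because $\rho x_z>x_h$. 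Hence $x_h>x_\ell$.
\end{itemize}
\item \textbf{Reference-graph entries.} A smaller Perron entry at a vertex of a reference graph does not by itself mean that attaching there gives the smaller $\rho$. Lemma~\ref{N(u)} uses the Perron vector of the graph being modified.
\end{itemize}
So the missing step needs a global polynomial comparison of the two candidates, which is what the paper supplies.
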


\section{ The lower bound on the dissociation number of unicyclic graphs}

For a connected graph \( G \), let \( P(G) \) denote the set of all pendant vertices of \( G \). Let \( Q(G) \) denote the set of all quasi-pendant vertices (vertices adjacent to pendant vertices). In particular, let \( Q_2(G) \) denote the set of all quasi-pendant vertices of degree exactly 2.

\begin{lemma}[\cite{Huang2024}]\label{LS(G)} 
Let \( G \) be a connected graph with \( n \geq 5 \) vertices and \( S^*(G) = P(G) \cup Q_2(G) \). Then there exists a maximum dissociation set \( S(G) \) such that \( S^*(G) \subseteq S(G) \).
\end{lemma}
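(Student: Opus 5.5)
We take a maximum dissociation set and repair it by local swaps, in two stages: first we put all pendant vertices into the set, then all vertices of $Q_2(G)$. In each stage we choose a maximum dissociation set $S$ that is extremal for a suitable count. If that set misses a vertex of $S^*(G)$, we produce another maximum dissociation set that is at least as good for the earlier stage's count and strictly better for the current count, which is a contradiction. The hypothesis $n\ge 5$ is used only to exclude the degenerate graphs $P_2,P_3,P_4$, where the swaps could destroy the structure we want to keep.

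\emph{Stage 1 (pendant vertices).} Among all maximum dissociation sets choose $S$ with $|S\cap P(G)|$ maximum. Suppose $x\in P(G)\setminus S$, and let $y$ be its unique neighbour.
\begin{itemize}
\item If $y\notin S$, then $x$ is isolated in $G[S\cup\{x\}]$. So $S\cup\{x\}$ is a larger dissociation set, a contradiction.
\item If $y\in S$ and $y$ has no neighbour in $S$, then in $G[S\cup\{x\}]$ both $x$ and $y$ have degree $1$, again a contradiction.
\item Hence $y$ has exactly one neighbour $z\in S$. If $z$ is not pendant, then $S'=(S\setminus\{z\})\cup\{x\}$ works: $y$ now has $x$ as its only $S'$-neighbour and $x$ has degree $1$. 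So $S'$ is a maximum dissociation set with one more pendant vertex.
\item If $z$ is pendant, take $S'=(S\setminus\{y\})\cup\{x\}$. Now $x$ and $z$ are isolated in $G[S']$, and $|S'\cap P(G)|$ increases.
\end{itemize}
Every case contradicts the choice of $S$, so some maximum dissociation set contains $P(G)$. No pendant edge can have both ends pendant, since $G$ is connected with $n\ge 5$, so the swaps above are always available.

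\emph{Stage 2 (quasi-pendant vertices of degree 2).} Among maximum dissociation sets containing $P(G)$, choose $S$ with $|S\cap Q_2(G)|$ maximum. Suppose $y\in Q_2(G)\setminus S$. Then $N(y)=\{x,w\}$ with $x$ pendant, so $x\in S$.
\begin{itemize}
\item If $w\notin S$, then in $G[S\cup\{y\}]$ the vertex $y$ is adjacent only to $x$, and $x$ has no other neighbour. So $S\cup\{y\}$ is a larger dissociation set, a contradiction.
\item Thus $w\in S$, and we put $S'=(S\setminus\{w\})\cup\{y\}$. This is a dissociation set of the same size, since $y$ and $x$ form an isolated edge in $G[S']$.
\item We must check that $w\notin P(G)\cup Q_2(G)$. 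If $w$ were pendant then $G=P_3$. If $w\in Q_2(G)$, then its neighbours are $y$ and a pendant vertex, so $G=P_4$. Both are excluded by $n\ge 5$.
\end{itemize}
Hence $S'$ still contains $P(G)$ and has one more vertex of $Q_2(G)$, a contradiction. Therefore $S^*(G)\subseteq S$.

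The main obstacle is the bookkeeping in Stage 1 when the $S$-partner $z$ of $y$ is itself pendant. There the naive swap of $z$ for $x$ gains nothing, and one must instead remove $y$. This is why the extremal choice is made in two stages, pendant vertices first and $Q_2(G)$ second, rather than by maximizing $|S\cap S^*(G)|$ directly.
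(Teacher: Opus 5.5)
Your proof is correct and complete. The two configurations you exclude in Stage 2 ($w$ pendant, giving $P_3$, and $w\in Q_2(G)$, giving $P_4$) are exactly the cases where the lemma genuinely fails: for $P_3$ and $P_4$ the set $S^*(G)$ is the whole vertex set, which is not a dissociation set. The paper gives no proof of this lemma; it is quoted from \cite{Huang2024}. So there is no in-text argument to compare with, and your two-stage exchange argument serves as a self-contained justification.

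Two minor remarks, neither affecting correctness. First, Stage 1 uses neither connectivity nor $n\ge 5$. In the case $z\in P(G)$, the vertex $y$ is automatically non-pendant because it has the two distinct neighbours $x$ and $z$. So your sentence about pendant edges with both ends pendant can be dropped.

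Second, your closing claim that the two-stage choice is forced slightly overstates things. Choosing $S$ to maximize $|S\cap S^*(G)|$ directly also works, at the cost of one extra case in the pendant step: the $S$-partner $z$ of $y$ lies in $Q_2(G)$. There, swapping $z$ for $x$ gains nothing. Removing $y$ and adding $x$ still gains, because $d(y)\ge 3$ (so $y\notin S^*(G)$) unless $G\cong P_4$. The two-stage version simply avoids that extra bookkeeping.
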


For convenience, if \( P(G) \cup Q_2(G) \subseteq S(G) \), then \( S(G) \) is called the optimal dissociation set of \( G \).
Let $G$ be a graph and let $t$ be a positive integer. A subset of vertices $S \subseteq V(G)$ is called a $t$-path vertex cover if every path of order $t$ in $G$ contains at least one vertex from $S$. We denote by $\psi_t(G)$ the minimum cardinality of a $t$-path vertex cover in $G$.

\begin{lemma}[\cite{Bre2011a}]\label{T}
Let $T$ be a tree and $t$ be a positive integer. Then, $\psi_t(T) \le \frac{|V(T)|}{t}$.
\end{lemma}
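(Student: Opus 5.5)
We would prove Lemma~\ref{T} by strong induction on $n=|V(T)|$, using a greedy ``deepest vertex'' construction. Fix $t\ge 1$.

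\emph{Base case.} If $T$ contains no path of order $t$, then the empty set is a $t$-path vertex cover. Hence $\psi_t(T)=0\le n/t$. This case covers in particular every tree with $n<t$.

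\emph{Choosing the vertex to delete.} Suppose $T$ contains a path of order $t$. Root $T$ at an arbitrary vertex $r$, and for each vertex $x$ let $T_x$ be the subtree consisting of $x$ and all its descendants. The subtree $T_r=T$ contains a path of order $t$, so the set of vertices $x$ for which $T_x$ contains a path of order $t$ is nonempty. Choose $v$ in this set of maximum depth. Two facts follow.
\begin{enumerate}[(a)]
\item $|V(T_v)|\ge t$, because $T_v$ contains a path on $t$ vertices.
\item For every child $c$ of $v$, the subtree $T_c$ contains no path of order $t$, because $c$ is deeper than $v$.
\end{enumerate}

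\emph{Putting $v$ in the cover.} Let $T'=T-V(T_v)$. Since $T_v$ is a full rooted subtree, $T'$ is either empty (when $v=r$) or a tree containing $r$, with $|V(T')|\le n-t$. Now let $P$ be any path of order $t$ in $T$; we check that it is covered by $\{v\}$ together with a cover of $T'$.
\begin{itemize}
\item If $P$ contains $v$, it is covered by $v$.
\item If $P$ avoids $v$ but meets $V(T_v)$, then $P$ lies entirely inside $T_v-v$. This is because $v$ is a cut vertex separating $T_v-v$ from $T'$, and $T_v-v$ is the disjoint union of the subtrees $T_c$ over the children $c$ of $v$. Being connected, $P$ lies inside a single $T_c$, which is impossible by (b).
\item Otherwise $P$ lies in $T'$.
\end{itemize}
Therefore, if $S'$ is a $t$-path vertex cover of $T'$, then $S'\cup\{v\}$ is a $t$-path vertex cover of $T$.

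\emph{Induction step.} Applying the induction hypothesis to $T'$ (or using $\psi_t(T')=0$ if $T'$ is empty) and using (a), we get
\[
\psi_t(T)\le 1+\psi_t(T')\le 1+\frac{n-|V(T_v)|}{t}\le 1+\frac{n-t}{t}=\frac{n}{t}.
\]

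The only delicate point is the choice of $v$ as a \emph{deepest} vertex whose subtree contains a $P_t$. This choice makes fact (b) hold, so removing $v$ destroys every path of order $t$ inside $T_v$, while $T_v$ still has at least $t$ vertices to pay for $v$. Connectivity of $T'$ is automatic, so the induction stays within trees.
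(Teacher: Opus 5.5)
Your proof is correct and complete. Choosing $v$ as a deepest vertex whose subtree contains a $P_t$ gives two things at once: $|V(T_v)|\ge t$, and every $P_t$ of $T$ either passes through $v$ or lies in the tree $T-V(T_v)$. The induction therefore closes with $\psi_t(T)\le 1+(n-t)/t=n/t$. The connectivity of $T-V(T_v)$ and the case $t=1$ are handled correctly.

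The paper does not prove this lemma; it only cites Bre\v{s}ar et al. So there is no in-paper route to compare against, and your argument stands on its own as a self-contained proof.

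Your argument (or simply summing over components) also gives $\psi_t(F)\le |V(F)|/t$ for any forest $F$. This forest version is what the paper actually needs when it applies the lemma with $t=3$ to $G_2=G-\{u,v\}$ in the proof of Theorem \ref{minU}. If the cycle vertex $u$ has further pendant subtrees besides $v$, then $G_2$ is a forest rather than a tree, contrary to what the paper asserts.
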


\begin{lemma}[\cite{zhao2025}]\label{U}
If \( G \) is a unicyclic graph of order \( n \) and \( 3 \mid n \), then \( \varphi(G) \geq \left\lceil \frac{2n}{3} \right\rceil \).
\end{lemma}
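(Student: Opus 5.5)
The plan is to reduce to the tree bound of Lemma~\ref{T} with $t=3$. The key observation is that a set $S$ is a dissociation set exactly when $V(G)\setminus S$ is a $3$-path vertex cover. Indeed, $G[S]$ has maximum degree at most $1$ if and only if $G[S]$ contains no path on three vertices. Hence
\[
\varphi(G)=n-\psi_3(G),
\]
and it suffices to show that $\psi_3(G)\le n/3$ whenever $G$ is unicyclic and $3\mid n$.

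To obtain a tree, pick an edge $uv$ on the unique cycle of $G$ and delete it. Simply deleting the edge would lose control of the $P_3$'s in $G$ that use $uv$, so we compensate by adding two new pendant vertices. Let $T'$ be obtained from $G-uv$ by adding a new vertex $v'$ adjacent only to $u$ and a new vertex $u'$ adjacent only to $v$. Since $G-uv$ is a spanning tree of $G$, the graph $T'$ is a tree of order $n+2$. By Lemma~\ref{T}, $T'$ has a $3$-path vertex cover $C'$ with $|C'|\le (n+2)/3$. Since $|C'|$ is an integer and $3\mid n$, this gives $|C'|\le \lfloor (n+2)/3\rfloor = n/3$.

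Next we project $C'$ back to $G$. Define $C\subseteq V(G)$ by replacing $v'$ with $v$ and $u'$ with $u$ whenever they lie in $C'$, and keeping all old vertices. Then $|C|\le |C'|\le n/3$. We check that $C$ meets every path on three vertices in $G$.
\begin{itemize}
\item A $P_3$ of $G$ avoiding the edge $uv$ is also a $P_3$ of $T'$, so it meets $C'\cap V(G)\subseteq C$.
\item A $P_3$ of the form $x\,u\,v$ with $x\neq v$ corresponds to the path $x\,u\,v'$ in $T'$. This path meets $C'$ in $x$, $u$, or $v'$, and each of these choices puts $x$, $u$, or $v$ into $C$.
\item Symmetrically, a $P_3$ of the form $u\,v\,w$ corresponds to $u'\,v\,w$ in $T'$ and is likewise hit by $C$.
\end{itemize}
Since every path on three vertices in $G$ has one of these forms, $C$ is a $3$-path vertex cover of $G$. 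Therefore $V(G)\setminus C$ is a dissociation set of size at least $2n/3=\lceil 2n/3\rceil$.

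The only delicate point is the choice of auxiliary tree. Splitting off a single copy of one endpoint only captures the $P_3$'s through $uv$ on one side, so two pendant copies are needed. Adding two vertices is exactly what the integrality slack allows, because $\lfloor (n+2)/3\rfloor = n/3$ when $3\mid n$. This step is where I would be most careful. The remaining verifications are routine.
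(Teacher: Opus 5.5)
The paper does not prove this lemma. It quotes it from Zhao, Liu and Xiong \cite{zhao2025} and uses it only for the case $3\mid n$ in the proof of Theorem~\ref{minU}, so there is no in-paper argument to match. Your argument is correct and self-contained:
\begin{itemize}
\item The identity $\varphi(G)=n-\psi_3(G)$ is exact.
\item $T'$ is a tree on $n+2$ vertices.
\item Every $P_3$ of $G$ that uses $uv$ lifts to $x\,u\,v'$ or $u'\,v\,w$ in $T'$.
\item The projection satisfies $|C|\le|C'|$.
\end{itemize}
So a single application of Lemma~\ref{T} gives $\psi_3(G)\le\lfloor (n+2)/3\rfloor$.

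The hypothesis $3\mid n$ enters only in your final arithmetic. Moreover, $n-\lfloor (n+2)/3\rfloor=\lfloor 2n/3\rfloor$ for every integer $n$, as you can check for $n\equiv 0,1,2 \pmod 3$. Your edge-splitting construction therefore proves the full lower bound $\varphi(G)\ge\lfloor 2n/3\rfloor$ of Theorem~\ref{minU} uniformly in $n$.

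By contrast, the paper treats $3\mid n$ by citation. For $3\nmid n$ it runs an induction on $n$ that deletes a pendant vertex, or a pendant vertex together with its neighbour on the cycle. It then invokes Lemma~\ref{LS(G)} and applies Lemma~\ref{T} to $G-\{u,v\}$. Your route replaces both the external citation and that case analysis with one reduction to a tree, and needs nothing beyond Lemma~\ref{T}.
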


Now, we give the proof of Theorem \ref{minU}. 

\renewcommand\proofname{\bf Proof of Theorem \ref{minU}}

\begin{proof}
Let G be a unicyclic graph of order $n$ with dissociation number $\varphi(G)$. Then $\varphi(G)\le n-1$. In what follows, we prove that $\varphi(G)\ge\lfloor \frac{2n}{3} \rfloor$. 
If \( 3 \mid n \), by Lemma \ref{U}, then \( \varphi(G) \geq \left\lceil \frac{2n}{3} \right\rceil\ge\lfloor \frac{2n}{3} \rfloor \) holds. Next, we suppose that $3 \nmid n$.
If \( G \cong C_n \), so the result holds. Otherwise,
let  \( C_q \) be the unique cycle of \( {G} \)
 with  $\ 3 \leq q \leq n-1 $. We proceed by induction on $n$. For $n=4$ and $G\cong C_3\bullet p_1\big(1\big)$, we get $\varphi(G)=3>\lfloor 8/3 \rfloor=2$, so the base case holds. Assume the result holds for all unicyclic graphs of order less than $n$. Let $v$ be a pendant vertex of $G$ with neighbor $u$.
If \( u \notin V(C_q) \) (where \( C_q \) is the unique cycle), let \( G_1 = G - \{v\} \). By the induction hypothesis, \( \varphi(G_1) \geq \lfloor \frac{2(n-1)}{3} \rfloor \). By Lemma \ref{LS(G)}, \( \varphi(G) \geq \varphi(G_1) + 1 \geq \lfloor \frac{2(n-1)}{3} \rfloor + 1 = \lfloor \frac{2n+1}{3} \rfloor \geq \lfloor \frac{2n}{3} \rfloor \).
If \( u \in V(C_q) \), then \( d_G(u) \geq 3 \). Let \( G_2 = G - \{u,v\} \). Then $G_2$ is a tree. For $t=3$, $\psi_3(G_2) = |V(G_2)| - \varphi(G_2)$.
Then by Lemma \ref{T}, we have \( \varphi(G_2) \geq  \frac{2(n-2)}{3}\).  By Lemma \ref{LS(G)} and $3 \nmid n$, we obtain \(
\varphi(G) \geq \varphi(G_2) + 1 \geq  \frac{2n-4}{3}  + 1 = \frac{2n-1}{3} \geq \lfloor \frac{2n}{3} \rfloor.
\)
\end{proof}

\section{Preliminaries for spectral minimizing graphs in \(\mathcal{U}_{n,\varphi} \)}

\begin{lemma}[\cite{Cvetkovic2009}]\label{LG(s,t)} 
Let \( G(s,t) \) be a graph obtained from a nontrivial connected graph by attaching two pendant paths of lengths \( s \) and \( t \) at a vertex \( w \in V(G) \). If \( s \geq t\geq1 \), then $\rho(G_{s,t}) > \rho(G_{s+1, t-1})$.
\end{lemma}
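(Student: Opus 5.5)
The plan is to compare characteristic polynomials directly and then use the standard interlacing/subgraph monotonicity of the spectral radius. Write $H$ for the underlying nontrivial connected graph and $w\in V(H)$ for the attachment vertex. Let $\phi(\cdot)=\phi(\cdot,\lambda)$ denote the characteristic polynomial, with the convention $\phi(P_0)=1$ and $\phi(P_{-1})=0$. Write $G_{s,t}$ for $H$ with pendant paths of lengths $s$ and $t$ at $w$.

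The two tools I will use are Schwenk's recurrence for a pendant vertex and a consequence of it for cutting at $w$:
\[
\phi(G)=\lambda\phi(G-v)-\phi(G-v-u)\qquad (v \text{ pendant},\ uv\in E(G)),
\]
\[
\phi(G_{s,t})=\phi(P_t)\,\phi(G_{s,0})-\phi(P_{t-1})\,\phi(H_s-w),
\]
where $H_s=G_{s,0}$ and $H_s-w=(H-w)\cup P_s$.

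\textbf{Key identity.} I aim to prove, by induction on $t\ge1$ (with $s\ge t$), that
\[
\phi(G_{s+1,t-1})-\phi(G_{s,t})=\phi(H-w)\,\phi(P_{s-t}).
\]
The sign convention is to be confirmed in the base case. For $t=1$, apply the recurrence to the pendant end of the length-$1$ path and to the end of the length-$(s+1)$ path, and compare. Both sides reduce to expressions in $\phi(G_{s,0})$, $\phi(G_{s-1,0})$ and $\phi(H-w)\phi(P_{s})$, $\phi(H-w)\phi(P_{s-1})$. The path recurrence $\phi(P_{m+1})=\lambda\phi(P_m)-\phi(P_{m-1})$ then yields $\phi(H-w)\phi(P_{s-1})$.

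Thus the correct index in the identity is $s-t$, and the base case will fix it precisely. For the inductive step, expand both $\phi(G_{s+1,t-1})$ and $\phi(G_{s,t})$ at the pendant vertex of the path of length $t-1$, respectively $t$. Each expands as $\lambda(\cdot)-(\cdot)$. The difference then satisfies the same three-term recurrence in $t$ as $\phi(P_m)$ does in $m$, and the inductive hypothesis closes the step. I expect this bookkeeping (getting the indices and signs exactly right) to be the main obstacle, though it is routine.

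\textbf{Conclusion from the identity.} Let $\rho=\rho(G_{s,t})$. The graphs $H-w$ and $P_{s-t}$ are proper subgraphs of the connected graph $G_{s,t}$ (or empty, with $\phi=1$). By Perron–Frobenius, their spectral radii are strictly less than $\rho$, so for every $\lambda\ge\rho$ we have $\phi(H-w,\lambda)>0$ and $\phi(P_{s-t},\lambda)>0$. Also $\phi(G_{s,t},\lambda)\ge0$ for $\lambda\ge\rho$. Hence
\[
\phi(G_{s+1,t-1},\lambda)>0 \quad\text{for all } \lambda\ge\rho,
\]
so the largest root of $\phi(G_{s+1,t-1})$ is less than $\rho$. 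This gives $\rho(G_{s,t})>\rho(G_{s+1,t-1})$, as claimed.

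The hypothesis that $H$ is nontrivial is used to make $H-w$ nonempty. The hypothesis $s\ge t$ guarantees $s-t\ge0$, so that $P_{s-t}$ is a genuine (possibly empty) path and its polynomial is positive beyond $\rho$.
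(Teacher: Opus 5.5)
The paper gives no proof of this lemma; it only cites it (it is the classical Li--Feng lemma). Your architecture is the standard one: a closed form for $\phi(G_{s+1,t-1})-\phi(G_{s,t})$, followed by positivity on $[\rho(G_{s,t}),\infty)$. However, the key identity you aim for is false, so the proof does not go through.

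\textbf{Where the identity fails.} Your own base case already shows the problem. Expanding at the two pendant ends gives $\phi(G_{s+1,0})-\phi(G_{s,1})=\phi(H-w)\phi(P_s)-\phi(G_{s-1,0})$. Since $\phi(G_{s-1,0})=\phi(H)\phi(P_{s-1})-\phi(H-w)\phi(P_{s-2})$, a term in $\phi(H)$ survives. The path recurrence then gives $\phi(P_{s-1})\,[\lambda\phi(H-w)-\phi(H)]$, not $\phi(H-w)\phi(P_{s-1})$.

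In general, your cutting formula gives
\[
\phi(G_{a,b})=\phi(H)\phi(P_a)\phi(P_b)-\phi(H-w)\,[\phi(P_{a-1})\phi(P_b)+\phi(P_a)\phi(P_{b-1})].
\]
Combining this with the Cassini-type identity $\phi(P_m)\phi(P_n)-\phi(P_{m+1})\phi(P_{n-1})=\phi(P_{m-n})$ (valid for $m\ge n-1$, with $\phi(P_{-1})=0$) yields
\[
\phi(G_{s+1,t-1})-\phi(G_{s,t})=\phi(P_{s-t})\,\bigl[\lambda\phi(H-w)-\phi(H)\bigr].
\]
Two checks rule out your version:
\begin{itemize}
\item For $H=K_2$ and $s=t=1$, the left side is $\phi(P_4)-\phi(K_{1,3})=1$, not $\phi(K_1)=\lambda$.
\item For $H=K_1$, both graphs equal $P_{s+t+1}$, so the left side is $0$, while your right side is $\phi(P_{s-t})\neq 0$.
\end{itemize}
So your argument would prove a strict inequality in a case where equality holds. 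This also shows that your account of where the nontriviality of $H$ is used cannot be right.

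\textbf{What is missing.} You need a proof that $\lambda\phi(H-w,\lambda)-\phi(H,\lambda)>0$ for $\lambda\ge\rho(G_{s,t})$; this does not follow from $H-w$ being a proper subgraph. One clean route is Schwenk's vertex expansion:
\[
\lambda\phi(H-w)-\phi(H)=\sum_{u\sim w}\phi(H-w-u)+2\sum_{C\ni w}\phi(H-V(C)),
\]
where $u$ ranges over the neighbours of $w$ in $H$ and $C$ over the cycles of $H$ through $w$. Every term is the characteristic polynomial of a proper subgraph of the connected graph $G_{s,t}$ (with $\phi(\emptyset)=1$), hence positive for $\lambda\ge\rho(G_{s,t})$. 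The first sum is nonempty precisely because $H$ is nontrivial and connected, so $w$ has a neighbour in $H$.

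Alternatively, for $\lambda>\rho(H)$ one has $\phi(H-w)/\phi(H)=\sum_{k\ge0}(A(H)^k)_{ww}\lambda^{-k-1}>1/\lambda$, because $(A(H)^2)_{ww}=d_H(w)\ge1$. This applies since $\rho(G_{s,t})>\rho(H)$.

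With this factor in place of $\phi(H-w)$, your final paragraph goes through unchanged: $\phi(G_{s+1,t-1},\lambda)\ge\phi(P_{s-t},\lambda)\,[\lambda\phi(H-w,\lambda)-\phi(H,\lambda)]>0$.

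\textbf{Minor point.} Your induction on $t$ via the three-term recurrence needs two initial values: $t=1,2$, or $t=0$ with the convention $\phi(G_{a,-1}):=\phi(H-w)\phi(P_a)$. The closed form above makes the induction unnecessary.
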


\begin{lemma}[\cite{Li1979}]\label{G}
Let \( G_1, G_2 \) be two connected graphs. Then we have the following statements.
\begin{enumerate}
\item[(i)] If \( G_2 \) is a proper subgraph of \( G_1 \), then $\rho(G_2) < \rho(G_1).$
\item[(ii)] If \( P(G_2, \lambda) > P(G_1, \lambda) \) for all \( \lambda \geq \rho(G_1) \), then $\rho(G_2) < \rho(G_1).$
\end{enumerate}
\end{lemma}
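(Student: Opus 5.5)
The lemma has two independent parts. I would prove (i) from the Rayleigh quotient together with the Perron--Frobenius theorem, and (ii) from the sign of the characteristic polynomial to the right of its largest root. Throughout, $P(G,\lambda)=\det(\lambda I-A(G))$ denotes the characteristic polynomial.

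For (i), I would first reduce to the case $V(G_2)=V(G_1)$. Extend the Perron vector of $G_2$ by zeros on $V(G_1)\setminus V(G_2)$. Equivalently, pad $A(G_2)$ with zero rows and columns, giving a symmetric nonnegative matrix $A_2'$ with $A_2'\le A(G_1)$ entrywise. Let $x$ be a unit nonnegative eigenvector of $A_2'$ for $\rho(G_2)$. Then
\[
\rho(G_2)=x^{T}A_2'x\le x^{T}A(G_1)x\le \rho(G_1),
\]
where the first inequality uses nonnegativity of $x$ and the second is the Rayleigh principle.

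To get strictness, suppose equality holds throughout. Then $x$ attains the Rayleigh maximum, so it is an eigenvector of $A(G_1)$ for $\rho(G_1)$. Since $G_1$ is connected, $A(G_1)$ is irreducible, and by Perron--Frobenius $x$ is entrywise positive. Positivity gives two conclusions. First, every vertex of $G_1$ lies in the support of $x$, hence in $V(G_2)$. Second, every edge $ij$ of $G_1$ not in $G_2$ would contribute $2x_ix_j>0$ to $x^{T}A(G_1)x-x^{T}A_2'x$, contradicting the first equality. Together these force $G_2=G_1$, contradicting properness. The only point needing care is this equality analysis, which is why connectivity of $G_1$ is needed.

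For (ii), recall that $A(G)$ is real symmetric, so $P(G,\lambda)$ is monic with all roots real, and its largest root is $\rho(G)$. Consequently $P(G_1,\lambda)\ge 0$ for every $\lambda\ge\rho(G_1)$, because each linear factor $\lambda-\lambda_i$ is nonnegative there. Now suppose, for contradiction, that $\rho(G_2)\ge\rho(G_1)$. Evaluating the hypothesis at $\lambda=\rho(G_2)$ gives
\[
0=P(G_2,\rho(G_2))>P(G_1,\rho(G_2))\ge 0,
\]
which is impossible. Hence $\rho(G_2)<\rho(G_1)$. This part is routine, so the strictness argument in (i) is the main step.
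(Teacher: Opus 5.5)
Your proof is correct and complete. The paper gives no proof of this lemma; it only quotes it from Li and Feng (1979), so there is no internal argument to compare yours against.

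For (i), the usual textbook route is the Perron--Frobenius monotonicity theorem: if $0\le B\le A$ entrywise, $A$ is irreducible and $B\neq A$, then $\rho(B)<\rho(A)$. Applying it to the zero-padded $A(G_2)$ gives the claim, and that theorem does not need symmetry. You instead use symmetry through the Rayleigh quotient and get strictness from the equality case. There the maximizing vector must be the positive Perron vector of $G_1$, which cannot be supported inside $V(G_2)$ and cannot miss an edge of $E(G_1)\setminus E(G_2)$ unless $G_2=G_1$. This is more elementary. It also shows that only the connectivity of $G_1$ is used, so the connectivity of $G_2$ is superfluous. Your argument for (ii) is the standard one and needs no connectivity at all.

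One point is worth keeping in mind. What you proved in (ii), and what the statement asserts, is a one-way implication, and its converse is false. For example, $\rho(C_6)=2<\sqrt{5}=\rho(K_{1,5})$, yet $P(C_6,3)=320<324=P(K_{1,5},3)$ with $3>\sqrt{5}$. The paper derives polynomial inequalities from spectral-radius inequalities ``by Lemma~\ref{G}(ii)'' in the proof of Lemma~\ref{G(m,n)} and in Case~2 of Section~4. Those steps are therefore not justified by the statement you proved.
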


Let \( G \) be a connected graph. By the Perron-Frobenius theorem, \( A(G) \) has a unique positive unit eigenvector corresponding to \( \rho(G) \), which is called the {Perron vector} of \( A(G) \).

\begin{lemma}[\cite{Wu2005}]\label{N(u)}
Let $u, v$ be two vertices of the connected graph $G$. Suppose $v_{1}, v_{2}, \dots, v_{s}$ ($1 \leq s \leq d_{v}$) are some vertices of $N_{G}(v) \setminus N_{G}(u)$ and $\boldsymbol{x}=(x_{1}, x_{2}, \dots, x_{n})^{T}$ is the Perron vector of $G$, where $x_{i}$ corresponds to the vertex $v_{i}$ ($1 \leq i \leq n$).
 Let $ G^* = G - \{vv_i \mid 1 \leq i \leq s\} + \{uv_i \mid 1 \leq i \leq s\}$.
If $x_{u} \geq x_{v}$, then $\rho(G)<\rho(G^{*})$.
\end{lemma}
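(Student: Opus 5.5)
The statement to prove is the last lemma, Lemma \ref{N(u)}: moving the edges $vv_1,\dots,vv_s$ to $u$ strictly increases the spectral radius whenever $x_u\ge x_v$. I would use the Rayleigh quotient together with Perron--Frobenius. Let $\boldsymbol{x}$ be the Perron vector of $G$, so $\boldsymbol{x}>0$, $\|\boldsymbol{x}\|=1$ and $\rho(G)=\boldsymbol{x}^TA(G)\boldsymbol{x}$.

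First I check that $G^*$ is a simple graph. Since each $v_i\notin N_G(u)$, no multiple edges are created. I will also take $v_i\neq u$, which is implicit in the hypothesis, so no loops arise either. By the Rayleigh principle,
\[
\rho(G^*)\ge \boldsymbol{x}^TA(G^*)\boldsymbol{x}=\boldsymbol{x}^TA(G)\boldsymbol{x}+2\sum_{i=1}^{s}x_{v_i}(x_u-x_v)\ge \rho(G),
\]
where the last inequality uses $x_u\ge x_v$ and $x_{v_i}>0$.

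The main obstacle is upgrading this to a strict inequality. Suppose equality held, so $\rho(G^*)=\boldsymbol{x}^TA(G^*)\boldsymbol{x}$. Then $\boldsymbol{x}$ would be an eigenvector of $A(G^*)$ for $\rho(G^*)$, since a unit vector attaining the maximum of the Rayleigh quotient of a symmetric matrix is an eigenvector for its largest eigenvalue. Hence $\rho(G^*)x_u=\sum_{w\in N_{G^*}(u)}x_w$. The graph $G^*$ need not be connected, but this identity is exactly what we need and does not use connectivity. Because $N_{G^*}(u)=N_G(u)\cup\{v_1,\dots,v_s\}$ and $\rho(G^*)=\rho(G)$, we get
\[
\rho(G)x_u=\sum_{w\in N_G(u)}x_w+\sum_{i=1}^{s}x_{v_i}=\rho(G)x_u+\sum_{i=1}^{s}x_{v_i}.
\]
This forces $\sum_{i=1}^{s} x_{v_i}=0$, contradicting $\boldsymbol{x}>0$ with $s\ge1$. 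Therefore $\rho(G)<\rho(G^*)$.

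The argument needs only positivity of the Perron vector (connectivity of $G$) and the eigenvector characterization of equality in the Rayleigh quotient. Equivalently, one could compare the eigen-equations at $u$ as above without invoking the equality case of Lemma \ref{G}.
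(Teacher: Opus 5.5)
Your proof is correct. The paper gives no proof of Lemma~\ref{N(u)} and simply cites it; your argument is the standard one from the cited source: the Rayleigh-quotient bound $\rho(G^*)-\rho(G)\ge 2\sum_{i=1}^{s}(x_u-x_v)x_{v_i}\ge 0$, with strictness coming from the eigen-equation at $u$ once equality forces $\boldsymbol{x}$ to be a $\rho(G^*)$-eigenvector of $A(G^*)$. You are also right that $v_i\neq u$ must be assumed (otherwise a loop arises), and that the last step does not need $G^*$ to be connected.
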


An {internal path} in a graph \( G \) is a sequence of vertices
\( w_1, w_2, \dots, w_k\) (\( k \geq 2 \)) such that: (i) the vertices in the sequence are distinct (except possibly \( w_1 = w_k \)); (ii) \( w_i \) is adjacent to \( w_{i+1} \) for \( i = 0,1,\dots,k-1 \); (iii) the vertex degrees satisfy \( d(w_1) \geq 3 \), \( d(w_2) = \cdots = d(w_{k-1}) = 2 \) (unless \( k \geq 2 \)), and \( d(w_k) \geq 3 \). Let \( W_n \) be the graph obtained from \( P_{n-2} \) by attaching one pendant edge to each quasi-pendant vertex of \( P_{n-2} \).

\begin{lemma}[\cite{Hoffman1975}]\label{G_w}
Suppose that  $G\ncong W_n$, and  $ e = uv $ is an edge on an interal path of $ G$. Let $ G_w $ be the graph obtained from  $G$  by once subdivision of the edge $e $. Then $\rho(G) > \rho(G_w).$
\end{lemma}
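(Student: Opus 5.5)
The statement is Lemma \ref{G_w}, Hoffman's classical subdivision lemma. The plan is the standard Hoffman--Smith argument with the Perron vector. Let $\rho=\rho(G)$ and let $\boldsymbol{x}$ be a Perron vector of $G_w$ with $\rho_w=\rho(G_w)$. We argue by contradiction, assuming $\rho_w\ge\rho$; by Lemma \ref{G}(i) applied in reverse this is the only case to exclude. Since $G\ncong W_n$ and $e$ lies on an internal path, we are not in the degenerate situation of a path or cycle, so $G$ contains a vertex of degree $\ge 3$. Hence $\rho>2$: $G$ properly contains the star $K_{1,3}$ or a tree containing it, and one checks that only the trees $W_n$ have $\rho= 2$ among graphs with an internal path. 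We then also get $\rho_w>2$, and $\rho_w$ can be written as $\rho_w=t+t^{-1}$ with $t>1$.

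Next we use the structure of the internal path $w_1w_2\cdots w_{k+1}$ in $G_w$, which is the internal path of $G$ with one extra vertex. Along interior vertices of degree $2$ the eigen-equation gives the linear recurrence $x_{i-1}+x_{i+1}=\rho_w x_i$. Its solutions are $x_i=\alpha t^{i}+\beta t^{-i}$. The goal is to delete one interior vertex of this internal path and reconnect, producing $G$ again, together with a test vector $\boldsymbol{y}$ on $G$ satisfying $A(G)\boldsymbol{y}\ge\rho_w\boldsymbol{y}$ entrywise with strict inequality somewhere. By Perron--Frobenius (Rayleigh quotient / subinvariance) this forces $\rho>\rho_w$, the desired contradiction.

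\begin{itemize}
\item If $w_1\neq w_{k+1}$, the recurrence is symmetric about the middle. Using the explicit form, we choose to delete a vertex near the minimum of $x$ on the path. We then show the neighbouring entries satisfy the needed inequality at the two vertices whose neighbourhoods change. This uses $\alpha,\beta$ both positive, or at least the convexity of $\alpha t^i+\beta t^{-i}$.
\item If $w_1=w_{k+1}$ (a pendant cycle), the same argument applies with symmetry about the vertex opposite $w_1$.
\end{itemize}

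At all vertices off the path, $\boldsymbol{y}$ agrees with $\boldsymbol{x}$ and the equations are unchanged.

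The main obstacle is verifying the inequality at the reconnection point. Concretely, after removing vertex $w_j$ and joining $w_{j-1}w_{j+1}$, we need $y_{j-2}+y_{j+1}\ge \rho_w y_{j-1}$ and the symmetric condition. This reduces to $x_{j+1}\ge x_j$ and $x_{j-1}\ge x_j$, i.e., $w_j$ is a local minimum of $x$ along the path. Such a minimum exists among the interior vertices because $t>1$ makes $i\mapsto \alpha t^i+\beta t^{-i}$ convex, and the endpoints have degree $\ge3$, which forces the path values to dip inside. Strictness comes from $\rho_w>2$, which makes the convexity strict. I would first try this direct local-minimum argument. If the boundary case where the minimum sits adjacent to an endpoint causes trouble, I would fall back to comparing characteristic polynomials via Lemma \ref{G}(ii), using the standard expansion $P(G_w)-P(G)$ along the path.
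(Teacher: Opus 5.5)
There is a genuine gap at the step you call the main obstacle. It is the heart of the lemma, not a boundary nuisance: the trouble is a minimum \emph{at} an endpoint, not next to one.

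In the open case $w_1\neq w_{k+1}$ the recurrence is symmetric, but the boundary values $x_{w_1},x_{w_{k+1}}$ are not. The condition $d(w_1),d(w_{k+1})\ge 3$ does not force an interior local minimum. Suppose the two neighbours of $w_1$ other than $w_2$ are pendant vertices. Then $\rho_w x_{w_1}=x_{w_2}+2x_{w_1}/\rho_w$, so $x_{w_2}=(\rho_w-2/\rho_w)x_{w_1}>x_{w_1}$ because $\rho_w>2$. Since $x_{w_{i+1}}-x_{w_i}>x_{w_i}-x_{w_{i-1}}$ at every interior vertex, the values increase strictly from $w_1$ to $w_{k+1}$. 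So no interior $w_j$ satisfies $x_{w_j}\le x_{w_{j-1}}$.

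The same happens on a cycle, which is how the paper uses the lemma. Take a unicyclic graph with cycle of length $2\ell-1$, a vertex $a$ carrying at least four pendant vertices, and a vertex $b$ carrying one, at distances $\ell-1$ and $\ell$ from $a$ along the two arcs. Subdividing an edge of the shorter arc makes $G_w$ symmetric. Each cycle neighbour $b'$ of $b$ then has $x_{b'}=(\rho_w-1/\rho_w)x_b/2>x_b$, since $\rho_w\ge\sqrt{6}>1+\sqrt{2}$. So again the values decrease strictly to the endpoint $b$.

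The degree condition only yields $x_{w_2}<t\,x_{w_1}$ and $x_{w_k}<t\,x_{w_{k+1}}$, i.e. $\alpha,\beta>0$. The minimum of $\alpha t^s+\beta t^{-s}$ can still lie outside the path. A better choice of $\boldsymbol{y}$ on the path cannot help either. Suppose $\boldsymbol{y}$ agrees with $\boldsymbol{x}$ outside the interior of the path and $A(G)\boldsymbol{y}\ge\rho_w\boldsymbol{y}$. A discrete comparison argument along the path, valid since $\rho_w\ge 2$ and run from each end, forces $x_{w_1}\ge x_{w_2}$ and $x_{w_{k+1}}\ge x_{w_k}$. Your argument does work in the closed case $w_1=w_{k+1}$, where the boundary values coincide.

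So a global ingredient is missing. When the edges of the path are cut edges, you can supply it by rescaling. If, say, $x_{w_1}<x_{w_2}$, delete $w_2$, join $w_1w_3$, and multiply $\boldsymbol{x}$ by a constant $c$ on the component of $G_w-\{w_2,\dots,w_k\}$ containing $w_1$. The only inequalities that change are at $w_1$ and $w_3$, and they hold strictly for $x_{w_2}/x_{w_1}<c<x_{w_3}/x_{w_2}$. This interval is nonempty because $x_{w_3}/x_{w_2}=\rho_w-x_{w_1}/x_{w_2}$ and $x_{w_2}/x_{w_1}<t$.

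For an open internal path lying on a cycle, as in the unicyclic example, there is no such component to rescale, and a genuinely different argument is needed. Your fallback to characteristic polynomials only names a route and carries out none of it. As written, the proposal does not prove the lemma in the case the paper relies on, namely subdividing edges of $C_q$.

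A smaller point: containing $K_{1,3}$ does not give $\rho>2$. The right reason is that a graph with an internal path either properly contains a cycle or contains some $W_m$ as a subgraph. In the latter case $\rho>2$ unless the graph is itself $W_n$.
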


Let \( M \) be an \( n \times n \) real matrix, and let \( X = \{1, 2, \dots, n\} \). Consider a partition \( \Pi: X = X_1 \cup X_2 \cup \cdots \cup X_k \). The matrix \( M \) can be written according to this partition as:
\[
M = \begin{bmatrix}
M_{1,1} & M_{1,2} & \cdots & M_{1,k} \\
M_{2,1} & M_{2,2} & \cdots & M_{2,k} \\
\vdots & \vdots & \ddots & \vdots \\
M_{k,1} & M_{k,2} & \cdots & M_{k,k}
\end{bmatrix}.
\]
Then the {quotient matrix} of \( M \) with respect to \( \Pi \) is defined as the \( k\times k\) matrix \(B_{\Pi}=(b_{i,j})_{i,j=1}^k \), where \( b_{ij} \) is the average of the row sums of \( M_{i,j} \). If each block \( M_{i,j} \) has constant row sum, the partition \( \Pi \) is called \textbf{equitable}.

\begin{lemma}(Brouwer and Haemers \cite{Brouwer2011}, p. 30; Godsil and Royle \cite{godsil2001}, pp. 196--198)\label{B}
Let $M$ be a real symmetric matrix, and let $\lambda_1(M)$ be the largest eigenvalue of $M$. If $B_{\Pi}$ is a quotient matrix of $M$, then the eigenvalues of $B_{\Pi}$ interlace those of $M$. Furthermore, if the partition $\Pi$ is equitable, then all eigenvalues of $B_{\Pi}$ are also eigenvalues of $M$, and for any eigenvalue of $B_{\Pi}$ the corresponding eigenvector has constant element on each vertex in a common part.
\end{lemma}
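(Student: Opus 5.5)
The plan is the standard argument via the characteristic matrix of the partition. Write $n_i=|X_i|$ and let $S$ be the $n\times k$ matrix with $S_{vj}=1$ if $v\in X_j$ and $S_{vj}=0$ otherwise. Then $S^{T}S=D:=\mathrm{diag}(n_1,\dots,n_k)$ is invertible. The $(i,j)$ entry of $S^{T}MS$ is the sum of all entries of the block $M_{i,j}$. Dividing row $i$ by $n_i$ gives the average row sum, so
\[
B_{\Pi}=D^{-1}S^{T}MS .
\]
I will first record this identity, since both parts of the lemma reduce to manipulations of it.

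\emph{Interlacing.} Put $R=SD^{-1/2}$, so that $R^{T}R=I_k$ and $R$ has orthonormal columns. Then
\[
D^{1/2}B_{\Pi}D^{-1/2}=D^{-1/2}S^{T}MSD^{-1/2}=R^{T}MR .
\]
Hence $B_{\Pi}$ is similar to the real symmetric matrix $R^{T}MR$, so its eigenvalues are real and coincide with those of $R^{T}MR$. I then apply the Cauchy interlacing theorem for compressions. If $M$ has eigenvalues $\lambda_1\ge\dots\ge\lambda_n$ and $R^{T}MR$ has eigenvalues $\mu_1\ge\dots\ge\mu_k$, then $\lambda_i\ge\mu_i\ge\lambda_{n-k+i}$. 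I would prove this from the Courant--Fischer min-max characterization. For $\mu_i\le\lambda_i$, take the $i$-dimensional subspace $U$ of $\mathbb{R}^k$ spanned by the top $i$ eigenvectors of $R^{T}MR$. Then $RU$ is $i$-dimensional, because $R$ is injective, and the Rayleigh quotient of $M$ at $Ry$ equals that of $R^{T}MR$ at $y$, since $\|Ry\|=\|y\|$. This gives $\lambda_i\ge\min_{x\in RU\setminus\{0\}}x^{T}Mx/x^{T}x=\mu_i$. Applying the same inequality to $-M$ yields the other side. This is the only step with real content, and it is classical.

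\emph{Equitable case.} If every block $M_{i,j}$ has constant row sum $b_{ij}$, then for $v\in X_i$ we get $(MS)_{vj}=\sum_{u\in X_j}M_{vu}=b_{ij}=(SB_{\Pi})_{vj}$, so $MS=SB_{\Pi}$. Now let $B_{\Pi}y=\theta y$ with $y\neq 0$. Then
\[
M(Sy)=SB_{\Pi}y=\theta\,Sy,
\]
and $Sy\ne 0$ because $S$ has full column rank, its columns having disjoint nonempty supports. Thus $\theta$ is an eigenvalue of $M$, with eigenvector $Sy$. By construction $(Sy)_v=y_i$ for every $v\in X_i$, so this eigenvector is constant on each part. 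This is exactly the statement that the corresponding eigenvector is constant on each cell.

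I expect no genuine obstacle. The only point needing care is to phrase the eigenvector claim correctly: it concerns the lifted vector $Sy$, not an arbitrary eigenvector of $M$ for the eigenvalue $\theta$, which need not be constant on the parts when $\theta$ has multiplicity greater than one. In later applications to the nonnegative irreducible matrix $A(G)$, one additionally uses that the Perron vector is positive. Lifting the Perron vector of $B_{\Pi}$ gives a positive eigenvector of $A(G)$, which by Perron--Frobenius must belong to $\rho(G)$. Hence $\lambda_1(B_{\Pi})=\rho(G)$.
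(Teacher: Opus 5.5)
Your proof is correct and follows essentially the standard argument in the sources the paper cites; the paper gives no proof of its own, only the citation to Brouwer--Haemers and Godsil--Royle. You write $B_{\Pi}=D^{-1}S^{T}MS$, note that it is similar to the compression $R^{T}MR$ with $R^{T}R=I_k$, obtain interlacing via Courant--Fischer, and in the equitable case use $MS=SB_{\Pi}$ to lift eigenvectors; your closing Perron--Frobenius remark giving $\lambda_1(B_{\Pi})=\rho(G)$ is exactly the extra fact the paper silently relies on when it compares characteristic polynomials of quotient matrices in the proof of Theorem~\ref{P2}.
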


A cut edge of a graph is an edge whose removal increases the number of connected components. For a graph \( G \) with \( n \) vertices, the characteristic polynomial of \( G \) is denoted by $P(G, \lambda) = \det(\lambda I_n - A(G))$, where  \( I_n \) is the \( n \times n \) identity matrix.

\begin{lemma}[\cite{Cvetkovic1995}]\label{P(G)}
Let \( G \) be a connected graph and \( e = uv \) be a bridge (cut edge) of \( G \). Then
$P(G, \lambda) = P(G - e, \lambda) - P(G - u - v, \lambda).$
\end{lemma}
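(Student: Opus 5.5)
The statement to prove is the last lemma above, Lemma~\ref{P(G)}: for a bridge $e=uv$ of a connected graph $G$,
\[
P(G,\lambda)=P(G-e,\lambda)-P(G-u-v,\lambda).
\]
I will expand $\det(\lambda I_n - A(G))$ as a signed sum over permutations and group the terms by their cycle structure. This is the Sachs-type expansion. Each permutation $\sigma$ contributes a nonzero term only if every cycle of $\sigma$ is one of the following:
\begin{itemize}
\item a fixed point, contributing a factor $\lambda$;
\item a transposition along an edge, contributing $-1$ after the sign is included;
\item a cycle of length at least $3$ following a cycle of $G$, traversed in either direction.
\end{itemize}
I will split the permutations into two classes: those in which $u$ and $v$ are not mapped to each other, and those containing the transposition $(u\,v)$.

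The key observation concerns cycles of length at least $3$ that use the edge $uv$. Since $e$ is a bridge, it lies on no cycle of $G$. Hence no such cycle can traverse $uv$. So the only way a nonzero term can involve the entry $a_{uv}$ is through the transposition $(u\,v)$.

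Permutations of the first class use only entries of $\lambda I - A(G-e)$. Moreover, every such permutation is a permutation of $V(G-e)=V(G)$ with the same weight in both matrices. Conversely, every nonzero term of $\det(\lambda I-A(G-e))$ avoids $(u\,v)$, because $a_{uv}=0$ there. So this class sums exactly to $P(G-e,\lambda)$.

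Permutations of the second class have the form $\sigma=(u\,v)\tau$, where $\tau$ is a permutation of $V(G)\setminus\{u,v\}$. The weight of $(u\,v)$ is $(-a_{uv})(-a_{vu})=1$, and it carries sign $-1$. Therefore the contribution of $\sigma$ equals $-1$ times the term of $\tau$ in $\det(\lambda I - A(G-u-v))$. Summing over all $\tau$ gives $-P(G-u-v,\lambda)$.

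Adding the two classes yields the identity. An equivalent route is Laplace expansion along row $u$ followed by column $v$: the minor that pairs $u$ with $v$ gives $-P(G-u-v,\lambda)$, and the remaining cross terms vanish by the bridge property.

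The only delicate step is justifying that no other term involves $a_{uv}$. This is exactly where the bridge hypothesis is used, and I would state it carefully via the cycle decomposition of $\sigma$. Connectivity of $G$ is not actually needed.
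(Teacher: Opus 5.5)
The paper gives no proof of this lemma; it simply cites Cvetkovi\'c, Doob and Sachs. There the result is the bridge case of Schwenk's edge formula
\[
P(G,\lambda)=P(G-e,\lambda)-P(G-u-v,\lambda)-2\sum_{C\ni e}P(G-V(C),\lambda),
\]
which is usually derived from Sachs' coefficient theorem. Your permutation-expansion argument is correct and is essentially that standard proof written out directly. The permutations containing $(u\,v)$ give the $-P(G-u-v,\lambda)$ term. The cycle-correction sum consists exactly of the terms you show vanish because a bridge lies on no cycle, so, as you note, connectivity is not needed.
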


\begin{lemma}\label{G(m,n)}
Suppose that $G \bullet p_l(p(u),p(v))$  is a connected graph, and \( u, v \in V(G) \) satisfy \( G - u \cong G - v \), where $p(u)-p(v)\ge1$.
 Then we have 
\(\rho(G \bullet p_l(p(u)+1,p(v)-1)) > \rho(G\bullet p_l(p(u),p(v)))\).
\end{lemma}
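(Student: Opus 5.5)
The plan is to compare characteristic polynomials and finish with Lemma \ref{G}(ii); the one structural step is a cut-edge reduction using Lemma \ref{P(G)}. Write $a=p(v)-1\ge 0$ and $b=p(u)$, so $b>a$ because $p(u)-p(v)\ge 1$. Let
\[
G_1=G\bullet p_l(p(u),p(v)), \qquad G_2=G\bullet p_l(p(u)+1,p(v)-1),
\]
and let $F=G\bullet p_l(b,a)$ be their common subgraph. Then $G_1$ is $F$ with one extra pendant path of length $l$ at $v$, and $G_2$ is $F$ with one extra pendant path of length $l$ at $u$. We want $\rho(G_2)>\rho(G_1)$.

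\emph{Step 1.} Apply Lemma \ref{P(G)} to the edge joining the extra path to $v$ (respectively $u$). Since that path has $l$ vertices outside $F$, deleting the edge leaves $F\cup P_l$, and deleting its two endpoints leaves $(F-v)\cup P_{l-1}$ (respectively $(F-u)\cup P_{l-1}$). Hence
\[
P(G_1,\lambda)-P(G_2,\lambda)=P(P_{l-1},\lambda)\bigl(P(F-u,\lambda)-P(F-v,\lambda)\bigr).
\]

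\emph{Step 2.} Read the hypothesis $G-u\cong G-v$ as an isomorphism carrying $v\in V(G-u)$ to $u\in V(G-v)$, as in the symmetric configurations where the lemma is used. Let $M=G-u$ with distinguished vertex $w=v$, and write $M\bullet j$ for $M$ with $j$ pendant paths of length $l$ at $w$. Then
\[
F-u\cong (M\bullet a)\cup bP_l, \qquad F-v\cong (M\bullet b)\cup aP_l.
\]
Factoring out $P(P_l,\lambda)^a$, it remains to show
\[
P(M\bullet a,\lambda)\,P(P_l,\lambda)^{b-a}>P(M\bullet b,\lambda).
\]

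\emph{Step 3.} Applying Lemma \ref{P(G)} to the attaching edge of one path at $w$ gives
\[
P(M\bullet(j+1),\lambda)=P(P_l,\lambda)\,P(M\bullet j,\lambda)-P(P_{l-1},\lambda)\,P\bigl((M-w)\cup jP_l,\lambda\bigr).
\]
Fix $\lambda\ge\rho(G_2)$. Every graph appearing here is a proper subgraph of $G_2$, so by Lemma \ref{G}(i) each component has spectral radius less than $\lambda$, and every characteristic polynomial above is positive. (If $l=1$, then $P_{l-1}$ is empty and its polynomial is taken to be $1$.) Therefore
\[
P(M\bullet(j+1),\lambda)<P(P_l,\lambda)\,P(M\bullet j,\lambda).
\]
Iterating from $j=a$ to $j=b-1$ gives the Step 2 inequality strictly, since $b>a$.

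\emph{Step 4.} Combining Steps 1–3, $P(G_1,\lambda)>P(G_2,\lambda)$ for all $\lambda\ge\rho(G_2)$. Lemma \ref{G}(ii), with $G_1$ in the role of its $G_2$, then yields $\rho(G_1)<\rho(G_2)$, which is the claim.

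The main obstacle is Step 2: the hypothesis must be read as an isomorphism matching the root $v$ to $u$, not merely an abstract isomorphism of $G-u$ and $G-v$. I would state this explicitly, since it holds in all intended applications to $S_2(C_k)$ and $C_6$. If one wants to avoid it, a fallback is Lemma \ref{N(u)}: show $x_u\ge x_v$ in the Perron vector of $G_1$ using $b>a$, then move one path from $v$ to $u$. That route also needs the symmetry to compare $x_u$ and $x_v$. A secondary point is checking that every polynomial in Steps 1–3 is positive for $\lambda\ge\rho(G_2)$; this follows from Lemma \ref{G}(i) because each graph is a proper subgraph of $G_2$.
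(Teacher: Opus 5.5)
Your proof is correct and follows the same route as the paper. Both expand the two characteristic polynomials along the attaching edge of the moved path via Lemma \ref{P(G)} and cancel the common term $P(P_l,\lambda)P(F,\lambda)$. Both then compare $(G-v)\bullet p_l(p(u))$ with $(G-u)\bullet p_l(p(v)-1)$ (the paper's $H_1'$ and $H_2'$) through an isomorphism matching $v$ to $u$, which the paper also uses tacitly when it calls $H_2'$ a proper subgraph of $H_1'$.

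Your Step 3 telescoping is the correct form of the paper's final step. The paper compares $P(H_2',\lambda)$ with $P(H_1',\lambda)$ directly, although these are polynomials of different degrees, and it misstates the factorization. The inequality actually needed is yours, $P(P_l,\lambda)^{p(u)-p(v)+1}P(H_2',\lambda)>P(H_1',\lambda)$ for $\lambda\ge\rho(G\bullet p_l(p(u)+1,p(v)-1))$.
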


\renewcommand\proofname{\bf Proof}

\begin{proof}
Suppose that there are $p(u)+1$ internally disjoint copies of the paths $P_{l+1}$ at vertex $u$ in $G\bullet p_l(p(u)+1,p(v)-1)$, denoted $P_{l+1}^{j} = u u_{j1} u_{j2} \cdots u_{jl},$ $(j = 1, 2, \dots, p(u)+1)$, and $p(v)$ internally disjoint copies of the paths $P_{l+1}$ at vertex $v$  in $G\bullet p_l(p(u),p(v))$, denoted
$P^i_{l+1} = v v_{i1} v_{i2} \cdots v_{il},$ $(i = 1, 2, \dots, p(v)).$
Let \( H_1 = G\bullet p_l(p(u),p(v))\setminus {\{v, v_{i1}\}}\), and \( H_2 = G\bullet p_l(p(u)+1,p(v)-1) \setminus{\{ u, u_{j1}\}} \). Let \(H_1'\) denote the nontrivial component of \(H_1\) that is not a path, and \(H_2'\) denote the corresponding nontrivial component of \(H_2\).

Since \( G - u \cong G - v \), \( H_2' \) is a proper subgraph of \( H_1' \), and \( H_1' \) is a proper subgraph of \( G\bullet p_l(p(u)+1,p(v)-1) \). By Lemma \ref{G} (i), we get
\(
\rho(G\bullet p_l(p(u)+1,p(v)-1)) > \rho(H_1') > \rho(H_2').
\) For all \( \lambda \ge \rho(H_1') \), we have \( P(H_2', \lambda) > P(H_1', \lambda) \) by Lemma \ref{G}
(ii) and $\rho(H_1') \neq \rho(H_2')$. Let $e_1=vv_{i1}$, $e_2=uu_{j1}$. From Lemma \ref{P(G)}, we obtain that
\begin{align*}
&P(G\bullet p_l(p(u),p(v)),\lambda)\\
&=P(P_{l},\lambda)\,P(G\bullet p_l(p(u),p(v)-1),\lambda) - P(P_{l-1},\lambda)( P(P_{l},\lambda)^{p(v)-1})\ P(H_1',\lambda);\\[1ex]
&P(G \bullet p_l(p(u)+1,p(v)-1),\lambda)\\&= P(P_{l},\lambda)\,P(G\bullet p_l(p(u),p(v)-1),\lambda) )- P(P_{l-1},\lambda) (P(P_{l},\lambda)^{p(u)} )P(H_2',\lambda).
\end{align*}
Combining this with \( P(H_2', \lambda) > P(H_1', \lambda) \), we conclude
\begin{align*}
&P(G\bullet p_l(p(u),p(v)), \lambda) - P(G\bullet p_l(p(u)+1,p(v)-1), \lambda)\\
&= P(P_{l-1},\lambda) (P(P_{l},\lambda)^{p(u)-p(v)+1})(P(H_2', \lambda) - P(H_1', \lambda))>0.
\end{align*}
By Lemma \ref{G} (ii), for all \( \lambda \geq \lambda_1(G\bullet p_l(p(u)+1,p(v)-1)) \), we have \( \rho(G\bullet p_l(p(u)+1,p(v)-1))> \rho(G(u,v) \bullet p_l(p(u),p(v))) \).
This completes the proof.
\end{proof}

\begin{lemma}\label{G(k_1)}
Suppose that \(G\) is a connected graph and \(w_1,w_2,\ldots,w_k\in V(G)\) satisfy
$G-w_i\cong G-w_j$ with $ i\ne j$. Let \(G^*\) be a graph with minimum spectral radius among all graphs in $G\bullet p_l\bigl(p(w_1),p(w_2),\ldots,p(w_k)\bigr)$ such that $p(w_1)+p(w_2)+\cdots+p(w_k)\equiv r\pmod{k}$ where $ 0\le r\le k-1$. Then 
$$
G^*\in
G\bullet p_l\left(
\left(\frac{\sum_{i=1}^{k}p(w_i)-r}{k}\right)^{[k-r]},
\left(\frac{\sum_{i=1}^{k}p(w_i)-r}{k}+1\right)^{[r]}
\right).
$$

\end{lemma}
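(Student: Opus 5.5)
The plan is to argue by contradiction, using Lemma \ref{G(m,n)} as a balancing step. Fix the total $N=\sum_{i=1}^{k}p(w_i)$, so that $N\equiv r \pmod{k}$, and write $N=kq+r$ with $q=\frac{N-r}{k}$. The family $G\bullet p_l(p(w_1),\dots,p(w_k))$ with this fixed total is finite, since it is indexed by the $k$-tuples of nonnegative integers summing to $N$. Hence a minimizer $G^*$ exists. The target is to show that in $G^*$ every two attachment counts differ by at most one.

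First I will prove the balancing claim: in $G^*$ we have $|p(w_i)-p(w_j)|\le 1$ for all $i\ne j$. Suppose instead that $p(w_i)-p(w_j)\ge 2$ for some pair. Consider the graph $G'$ obtained from $G^*$ by moving one pendant path $P_{l+1}$ from $w_i$ to $w_j$. To compare $G'$ with $G^*$, view both as graphs of the form $H\bullet p_l(\cdot,\cdot)$ on the two vertices $w_j,w_i$. Here $H$ is $G$ with all the other attached paths (at $w_t$, $t\ne i,j$) included, and also with $p(w_j)$ paths at $w_j$ and $p(w_i)-1$ paths at $w_i$ already attached. In this view $G^*=H\bullet p_l(p(w_i)$ extra at $w_i$, $0$ at $w_j)$ after a suitable re-indexing.

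In practice I will apply Lemma \ref{G(m,n)} with $u=w_i$, $v=w_j$, $p(u)=p(w_i)-1$ and $p(v)=p(w_j)+1$, working in the base graph $G$ carrying the remaining attachments. The hypothesis $p(u)-p(v)\ge 0$ is needed in the form stated there, and this follows from $p(w_i)-p(w_j)\ge 2$. The lemma then gives $\rho(G^*)>\rho(G')$. This contradicts the minimality of $G^*$, since $G'$ lies in the same family.

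\textbf{Main obstacle.} The step I expect to be the main obstacle is verifying the hypothesis $G-u\cong G-v$ of Lemma \ref{G(m,n)}. The lemma needs this symmetry for the base graph once the other paths are attached, whereas we only assume $G-w_i\cong G-w_j$ for the bare graph $G$. So I will not apply Lemma \ref{G(m,n)} as a black box. Instead I will redo its characteristic-polynomial computation directly, using Lemma \ref{P(G)} on the cut edge joining $w_i$ (respectively $w_j$) to the first vertex of one of its pendant paths. This gives
\begin{align*}
P(G^*,\lambda)-P(G',\lambda)=P(P_{l-1},\lambda)\,P(P_l,\lambda)^{m}\bigl(P(H_2,\lambda)-P(H_1,\lambda)\bigr)
\end{align*}
for a suitable exponent $m$. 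Here $H_1$ and $H_2$ are the non-path components obtained by deleting $w_j$ and $w_i$ respectively, together with the first path vertex.

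To conclude, I will show that $H_2$ is isomorphic to a proper subgraph of $H_1$. The point is that $H_1$ retains more paths at $w_i$ than $H_2$ retains at $w_j$, and $G-w_i\cong G-w_j$ lets the base parts be matched. With this in hand, Lemma \ref{G} gives $P(H_2,\lambda)>P(H_1,\lambda)$ for $\lambda\ge\rho(G')$, and hence $\rho(G')<\rho(G^*)$.

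\textbf{Conclusion.} Once the balancing claim holds, every count $p(w_i)$ equals $a$ or $a+1$ for some integer $a$. Let $s$ be the number of vertices with count $a+1$, where $0\le s\le k-1$. Then $N=ka+s$, and comparing with $N=kq+r$ modulo $k$ forces $s=r$ and $a=q$. So exactly $k-r$ vertices carry $q=\frac{N-r}{k}$ paths and $r$ vertices carry $q+1$ paths. This is precisely the membership
\[
G^*\in G\bullet p_l\left(\left(\frac{N-r}{k}\right)^{[k-r]},\left(\frac{N-r}{k}+1\right)^{[r]}\right)
\]
claimed in the statement.
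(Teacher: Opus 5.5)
\textbf{The balancing step has a genuine gap.} You follow the paper's route: balance the loads pairwise via Lemma~\ref{G(m,n)}, then count, and the counting step is fine. The gap is exactly the obstacle you flagged. Your proposed fix does not remove it, and the paper's proof ignores it. Write $a=p(w_i)\ge b+2$ with $b=p(w_j)$, and let $B$ be $G$ together with the paths already attached at all $w_t$, $t\ne i,j$. Apply Lemma~\ref{P(G)} to the moved path, and then recursively to the paths at $w_i$ and at $w_j$. With $p=P(P_l,\lambda)$, $q=P(P_{l-1},\lambda)$ and $\psi=P(B-w_i-w_j,\lambda)$, this gives
\[
P(G^*,\lambda)-P(G',\lambda)=q\,p^{a+b-1}\bigl(P(B-w_j,\lambda)-P(B-w_i,\lambda)\bigr)-(a-b-1)\,q^{2}p^{a+b-2}\,\psi .
\]
Suppose $P(B-w_i,\lambda)=P(B-w_j,\lambda)$, for example because some isomorphism $B-w_i\to B-w_j$ sends $w_j$ to $w_i$. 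That is what your claim that $H_2$ is a proper subgraph of $H_1$ silently uses. Then the right side is negative for $\lambda\ge\rho(G^*)$ and you are done. Otherwise the first term has no controlled sign and can dominate. The hypothesis $G-w_i\cong G-w_j$ concerns only the bare graph. Its isomorphism need not send $w_j$ to $w_i$, nor carry the loaded vertices $w_t$ to equally loaded ones.

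There are also two smaller slips, both inherited from the paper's proof of Lemma~\ref{G(m,n)}. Your displayed identity cannot hold as written: $H_1$ and $H_2$ have different orders, and the power of $P(P_l,\lambda)$ should multiply only $P(H_2,\lambda)$. And Lemma~\ref{G}(ii) deduces a $\rho$-inequality from a polynomial inequality, not the converse.

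\textbf{The gap cannot be closed, because the statement is false under its hypothesis.} Take $G=C_n$, so $G-w\cong P_{n-1}$ for every $w$. Let $w_1,w_2,w_3$ be consecutive cycle vertices, $l=1$, and total $4$, so $r=1$. For the loadings $(p(w_1),p(w_2),p(w_3))$ the lemma predicts $(2,1,1)$, $(1,2,1)$ or $(1,1,2)$. As $n\to\infty$ the spectral radii converge to those of the two-way infinite path carrying the same pendants. Solving the eigen-equations with geometrically decaying tails gives:
\begin{enumerate}
\item[(a)] $4/\sqrt3\approx 2.309$ for the unbalanced loading $(2,0,2)$;
\item[(b)] $\approx 2.320$ for $(2,1,1)$ and $(1,1,2)$;
\item[(c)] $\approx 2.339$ for $(1,2,1)$.
\end{enumerate}
So for large $n$ the minimizer is unbalanced. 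What the argument actually needs is $P(B-w_i,\lambda)=P(B-w_j,\lambda)$ for every pair and every loading of the other $w_t$. An automorphism of $G$ swapping $w_i,w_j$ and fixing all other $w_t$ would guarantee this. That does not follow from $G-w_i\cong G-w_j$, and such automorphisms are not available for $X=V(C_k)$ in $S_2(C_k)$ once $k\ge4$.
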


\begin{proof}
Let $S=\sum_{i=1}^{k}p(w_i)$.
We first show that, in \(G^*\), the numbers
$p(w_1),p(w_2),\ldots,p(w_k)$ differ by at most one. Suppose otherwise that there exist \(i,j\) such that
$p(w_i)\ge p(w_j)+2.$ Since
$G-w_i\cong G-w_j,$
Lemma \ref{G(m,n)} can be applied to the pair \(w_i,w_j\). Then
\[
\begin{aligned}
&\rho\left(
G\bullet p_l\left(
p(w_1),\dots,p(w_i),\dots,p(w_j),\dots,p(w_k)
\right)
\right) \\
& >
\rho\left(
G\bullet p_l\left(
p(w_1),\dots,p(w_i)-1,\dots,p(w_j)+1,
\right.\right.\left.\left.\dots,p(w_k)\right)
\right).
\end{aligned}
\]
This contradicts the minimality of \(G^*\).
Therefore,
$|p(w_i)-p(w_j)|\le1$
for all \(i,j\in\{1,\ldots,k\}\).
Now write
$S=kq+r$, $0\le r\le k-1,$ where $q=\frac{S-r}{k}.$ Since every \(p(w_i)\) is an integer and any two of them differ by at most one, each \(p(w_i)\) is either \(q\) or \(q+1\).
Let \(t\) denote the number of indices \(i\) for which
$p(w_i)=q+1.$
Then
$S=(k-t)q+t(q+1)=kq+t.$
Since \(S=kq+r\), it follows that
$t=r.$
Hence exactly \(k-r\) of the vertices \(w_1,\ldots,w_k\) have \(q\) pendent paths, while the remaining \(r\) vertices have \(q+1\) pendent paths. Consequently,
$$
G^*\in
G\bullet p_l\left(q^{[k-r]},(q+1)^{[r]}\right),
$$
which is equivalent to
$$
G^*\in
G\bullet p_l\left(
\left(\frac{\sum_{i=1}^{k}p(w_i)-r}{k}\right)^{[k-r]},
\left(\frac{\sum_{i=1}^{k}p(w_i)-r}{k}+1\right)^{[r]}
\right).
$$
The proof is complete.
\end{proof}

As an immediate consequence of Lemma \ref{G(k_1)}, we can obtain the following result.

\begin{corollary}\label{U(k_1)}
Let \(G\) be a unicyclic graph, and let \(  w_1, w_2,\dots, w_k \in V(G) \) such that \( G - w_i \cong G - w_{k-i+1} \) for all \( 1\le i \leq k \). Suppose \( G^* \) is the minimizer graph in \( G \bullet p_l(p(w_1),p(w_2),\dots,p(w_k))\) with \( p(w_1)+p(w_2)+\dots+p(w_k) \equiv r \pmod{k} \), where \( 0 \leq r \leq k-1 \). Then
$$
G^*\in
G\bullet p_l\left(
\left(\frac{\sum_{i=1}^{k}p(w_i)-r}{k}\right)^{[k-r]},
\left(\frac{\sum_{i=1}^{k}p(w_i)-r}{k}+1\right)^{[r]}
\right).
$$
\end{corollary}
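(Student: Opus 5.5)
We plan to derive Corollary \ref{U(k_1)} from Lemma \ref{G(k_1)}. All that is needed is to check that the hypothesis of Lemma \ref{G(k_1)} holds, namely $G-w_i\cong G-w_j$ for every pair $i\ne j$. The given condition $G-w_i\cong G-w_{k-i+1}$ only pairs each $w_i$ with its mirror vertex. So the argument will run through Lemma \ref{G(m,n)} directly, applied only to those pairs for which a vertex-deleted isomorphism is available. We then show that this weaker information still forces the parameters $p(w_i)$ to be balanced.

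\textbf{Step 1.} Let $G^*=G\bullet p_l(p(w_1),\dots,p(w_k))$ be a minimizer, and set $S=\sum_{i} p(w_i)=kq+r$. Suppose $p(w_i)\ge p(w_{k-i+1})+2$ for some $i$. Lemma \ref{G(m,n)} applies to the pair $u=w_i$, $v=w_{k-i+1}$: first swap the roles so that we move a path from the larger side to the smaller side. This produces a graph in the same family with the same total $S$ and strictly smaller spectral radius, contradicting minimality. Hence $|p(w_i)-p(w_{k-i+1})|\le 1$ for every mirror pair.

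\textbf{Step 2.} This is the main obstacle: extending the balance from mirror pairs to all pairs. Our first attempt uses the setting in which the corollary is applied later, where $G$ is unicyclic and the $w_i$ are the cycle vertices of $S_2(C_k)$. There the cycle's rotational symmetry gives $G-w_i\cong G-w_j$ for all $i,j$. We would therefore interpret the hypothesis as coming from an automorphism group of $G$ acting transitively on $\{w_1,\dots,w_k\}$; the reflection $w_i\mapsto w_{k-i+1}$ together with the cyclic structure generates such a group. Under transitivity, the full hypothesis of Lemma \ref{G(k_1)} holds and that lemma applies verbatim.

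If transitivity is unavailable, the fallback is to apply Lemma \ref{G(m,n)} to any pair $w_i,w_j$ with $p(w_i)\ge p(w_j)+2$. One then justifies the key step inside the proof of Lemma \ref{G(m,n)}, namely that $H_2'$ is a proper subgraph of $H_1'$, using only unicyclicity and the mirror isomorphism. This is where the real difficulty lies, and we would check it carefully on the cycle.

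\textbf{Step 3.} Once all $p(w_i)$ differ by at most one, the counting argument from the proof of Lemma \ref{G(k_1)} finishes the proof. Each $p(w_i)$ lies in $\{q,q+1\}$, and if $t$ of them equal $q+1$, then $kq+t=S=kq+r$, so $t=r$. Therefore
\[
G^*\in G\bullet p_l\left(q^{[k-r]},(q+1)^{[r]}\right), \qquad q=\frac{S-r}{k},
\]
as claimed.
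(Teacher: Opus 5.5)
Your Step 2 is a genuine gap, and it cannot be closed for the corollary as literally stated. Your first route does not prove the statement; it changes it. Nothing in the hypothesis supplies the rotations or the transitive group you invoke, and for odd $k$ the condition $G-w_i\cong G-w_{k-i+1}$ is vacuous for the middle vertex. Assuming transitivity just replaces the hypothesis by that of Lemma~\ref{G(k_1)}. The fallback, applying Lemma~\ref{G(m,n)} to a non-mirror pair, has nothing behind it: the step $H_2'\subsetneq H_1'$ in that lemma's proof needs $G-u\cong G-v$ (with $v$ sent to $u$). For non-mirror pairs the desired conclusion is in fact false.

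Concretely, let $G$ be the paw: a triangle $cyz$ plus a pendant vertex $t$ at $c$. Take $k=3$, $(w_1,w_2,w_3)=(y,t,z)$, $l=1$ and $\sum p(w_i)=2$, so $r=2$ and $q=0$. The hypothesis holds, since $G-y\cong G-z\cong P_3$. Let $F=G\bullet p_1(0,2,0)$, i.e.\ two leaves $a,b$ at $t$. Its Perron vector satisfies $x_y/x_t=(\rho-2/\rho)/(\rho-1)>1$, because $\rho(F)>2$. So Lemma~\ref{N(u)}, moving $b$ from $t$ to $y$, gives $\rho(F)<\rho(G\bullet p_1(1,1,0))=\rho(G\bullet p_1(0,1,1))$. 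Moreover $\rho(F)\approx 2.278$, the largest root of $\lambda^4-\lambda^3-5\lambda^2+3\lambda+4$ (the quotient polynomial of the equitable partition $\{c\},\{y,z\},\{t\},\{a,b\}$). This is below $\rho(G\bullet p_1(1,0,1))=1+\sqrt2$, the spectral radius of the net. Hence the minimizer is unbalanced, contradicting the stated conclusion.

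The paper passes over the mismatch you spotted. It only asserts that the corollary is an immediate consequence of Lemma~\ref{G(k_1)}, which is valid only under the stronger hypothesis $G-w_i\cong G-w_j$ for all $i\neq j$. That stronger hypothesis is exactly what holds where the corollary is used: $G=C_{3k}$ and $w_i=z_{3i-2}$, where every $G-w_i\cong P_{3k-1}$ and the rotations act transitively on $X$. So the right repair is to strengthen the hypothesis accordingly. Then your first route together with Step 3 is the entire proof, and Step 1 is superfluous.

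One smaller point, shared with the paper's proof of Lemma~\ref{G(k_1)}: Lemma~\ref{G(m,n)} as stated requires $p(u)-p(v)\ge 1$, so it only rebalances a difference of at least $3$. Reducing a difference of exactly $2$, as in your Step 1, needs the case $p(u)=p(v)$, which you would have to state and justify.
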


\section{The minimizer graphs in \(\mathcal{U}_{n,\varphi} \)}

In this section, we characterize the structure of the graph with minimum spectral radius in $\mathcal U_{n,\varphi}$,  where $\left\lfloor \frac{2n}{3} \right\rfloor \leq \varphi \leq n-1$. 
Suppose that \( G^* \) is the minimizer graph in \( \mathcal{U}_{n,\varphi} \) and \( S(G^*) \) is an optimal dissociation set \( |S(G^*)| = \varphi\). Let \( C_q \) be the unique cycle of \( G^* \) and \( C_q = z_1z_2\cdots z_{q}z_1 \). Then we have the following claims.

\renewcommand\proofname{\bf Proof of Theorem \ref{T2}}

\renewcommand\proofname{\bf Proof}

\begin{claim}\label{1}
$G^* \in C_{q}\bullet p_{\leq2}(p(z_1),p(z_2),\dots,p(z_k))$, where $k\le q$.
\end{claim}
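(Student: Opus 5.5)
We must show that in the minimizer $G^*$ every component of $G^*-E(C_q)$ hanging at a cycle vertex $z_i$ is a union of pendant paths of length at most $2$ glued at $z_i$. Equivalently: every vertex off the cycle has degree at most $2$, and every vertex at distance $2$ from the cycle is pendant. The plan is to argue by contradiction. Whenever the trees attached to $C_q$ violate this shape, we construct a graph $G'\in\mathcal{U}_{n,\varphi}$ with $\rho(G')<\rho(G^*)$. Throughout we fix an optimal dissociation set $S(G^*)$, so that by Lemma \ref{LS(G)} all pendant vertices and all degree-2 quasi-pendant vertices lie in $S$. This lets us track how $\varphi$ changes under each local move.

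\textbf{Step 1 (no branching off the cycle).} Suppose some vertex $w\notin V(C_q)$ has $d(w)\ge 3$. Choose such a $w$ farthest from the cycle. Then all branches of $w$ pointing away from the cycle are pendant paths. If two of them have lengths $s\ge t\ge1$, Lemma \ref{LG(s,t)} shows that replacing them by paths of lengths $s+1,t-1$ (i.e.\ merging at $w$) lowers $\rho$. Repeatedly merging would eventually produce a single long pendant path, but this must be done preserving $\varphi$. So instead we rearrange branches using Lemma \ref{N(u)}: compare the Perron entries at $w$ and at its neighbour $w^-$ toward the cycle. Moving a pendant branch from the vertex with smaller Perron entry to the one with larger entry increases $\rho$, and this is used in reverse to show that any configuration with branching at $w$ can be beaten. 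We check in each case that the dissociation number is unchanged. The check uses the known structure of $S$ near pendant paths: a pendant $P_3$ at a vertex contributes exactly $2$ to $\varphi$ independently of the rest, and a pendant $P_2$ contributes $1$.

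\textbf{Step 2 (path lengths at most 2).} Having pendant paths attached only at cycle vertices, suppose some path $z_iu_1u_2\cdots u_m$ has $m\ge3$. Cut off the part beyond $u_2$, i.e.\ the path $u_3\cdots u_m$. Reattach it so that the maximum dissociation count is preserved: a block of three consecutive vertices contributes $2$ to $\varphi$, exactly as a new pendant $P_3$ at $z_i$ does. Then compare spectral radii. The long path has an internal edge structure in which subdivision arguments (Lemma \ref{G_w}) and the characteristic polynomial recursion of Lemma \ref{P(G)} applied to the bridge $u_2u_3$ let us compare $P(G^*,\lambda)$ with $P(G',\lambda)$ for $\lambda\ge\rho(G')$. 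We then conclude by Lemma \ref{G}(ii). Remainders modulo $3$ are handled by moving one or two vertices as a pendant $P_2$ or $P_1$, again checking $\varphi$.

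\textbf{Main obstacle.} The hard part is that minimizing $\rho$ alone favours long paths, while the constraint $\varphi(G')=\varphi$ forces paths to be broken up. So each transformation must be simultaneously $\varphi$-preserving and $\rho$-decreasing, and the polynomial comparison in Step 2 is where the sign must be verified carefully. I would first try to express the difference $P(G',\lambda)-P(G^*,\lambda)$ as a product of path polynomials (all positive for $\lambda\ge2>$ their spectral radii) times a difference of characteristic polynomials of nested proper subgraphs, mirroring the proof of Lemma \ref{G(m,n)}.
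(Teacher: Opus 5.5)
There is a genuine gap in both steps: neither step ends with a concrete competitor $G'\in\mathcal{U}_{n,\varphi}$ with $\rho(G')<\rho(G^*)$, and the tools you substitute do not supply one.

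In Step 1 you start from the right lemma and then abandon it. Lemma \ref{N(u)} applied to $G^*$ only ever yields graphs of \emph{larger} spectral radius. To beat $G^*$ with it, you need a specific $G'$ from which $G^*$ arises by moving branches onto a vertex whose Perron entry \emph{in $G'$} is at least that of the source vertex. Comparing $w$ with $w^-$ does not give this. Moving a branch from $w$ to $w^-$ is certified to lower $\rho$ only if $x'_w\ge x'_{w^-}$ in the new graph. This fails, for example, when $w$ carries just two pendant paths, since Perron entries strictly decrease along a pendant path once $\rho>2$. Moreover, $w^-$ may itself lie off the cycle. Your $\varphi$-bookkeeping is also incorrect: a pendant $P_3$ at $v$ forces $v\notin S$, so attaching it raises $\varphi$ by $2$ or by $1$, depending on the rest of the graph.

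The invariant that does hold is this: deleting (or appending) three end vertices of a pendant path of length at least $3$ changes $\varphi$ by exactly $2$. The paper combines it with Lemma \ref{LG(s,t)}, which you had already identified. It moves a tail of the shorter pendant path at $v$, containing a multiple of $3$ vertices (starting at $v_{21}$, $v_{22}$ or $v_{23}$ according to $t \bmod 3$), to the end of the longer path. This keeps $n$ and $\varphi$ and lowers $\rho$.

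Be aware that this can still leave a short path of length $t\le 2$ at $v$. The paper's treatment of that case, moving the whole short path, does not preserve $\varphi$ in general. Take a triangle $qq_2q_3$, a path $qpv$, a pendant vertex at $p$, and at $v$ a pendant path $vv_{11}v_{12}$ and a pendant vertex $v_{21}$. This graph has $\varphi=7$. After moving $v_{21}$ onto $v_{12}$, the non-cycle vertices induce a $P_6$ and $\varphi=6$. So that sub-case needs an argument that neither the paper's text nor your proposal provides.

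In Step 2 the key idea is missing. Your wording suggests the pieces of $u_3\cdots u_m$ are reattached as new short pendant paths at $z_i$. If so, the spectral comparison goes the wrong way: by Lemma \ref{LG(s,t)}, splitting one pendant path at a vertex into two shorter ones there \emph{increases} $\rho$. Lemma \ref{G_w} is also unavailable on the pendant path. Its edges do not lie on an internal path, and subdividing one of them just lengthens the pendant path. That gives a graph containing $G^*$ as a proper subgraph, so $\rho$ goes up.

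The only internal-path edges here are the cycle edges, and this is what the paper exploits. It subdivides $z_iz_{i+1}$ exactly $3s'$ times and deletes the last $3s'$ vertices $u_s,\dots,u_{s-3s'+1}$ of the long pendant path. Then:
\begin{itemize}
\item the order is unchanged;
\item $\varphi$ is unchanged, because three subdivision vertices on an edge raise $\varphi$ by exactly $2$, while trimming three end vertices of a pendant path lowers it by exactly $2$;
\item $\rho$ strictly decreases, by Lemma \ref{G_w} followed by Lemma \ref{G}(i).
\end{itemize}
No characteristic-polynomial comparison via Lemma \ref{P(G)} is needed. This resolves your ``main obstacle'': surplus length is pushed into the cycle, where lengthening lowers $\rho$, rather than split off into new pendant paths, where it raises $\rho$.
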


\begin{proof}
Otherwise, let $T_i$ be the non-trivial connected component containing $z_i$ obtained by deleting the edges in $E(C_q)$. Then we can find a  vertex $v$ on $T_i$ that is farthest from $z_i$ with $d_{G^*}(v)\ge3$. Then there are at least two paths of length greater than 1 at $v$, say $P_{s+1}, P_{t+1}$. Let $P_{s+1} = vv_{11}\cdots v_{1s}$ and $P_{t+1} = vv_{21}\cdots v_{2t}$, where $s\ge t\ge 1$. If $t\le 2$, then let $G_1=G^* - vv_{21} + v_{1s} v_{21}$. 
If $s\geq t\ge 3$, let 
\[
G_1 =
\begin{cases}
G^* - vv_{21} + v_{1s} v_{21}, & \text{if } t \equiv 0\pmod{3}; \\[4pt]
G^* - v_{21} v_{22} + v_{1s}v_{22}, & \text{if } t \equiv 1 \pmod{3}; \\[4pt]
G^* - v_{22} v_{23} + v_{1s} v_{23}, & \text{if } t \equiv 2 \pmod{3}.
\end{cases}
\]
Then we have \(n(G_1) = n(G^*)\) and \(\varphi(G_1) = \varphi(G^*)\) by Lemma \ref{LS(G)}, and \( G_1 \in \mathcal{U}_{n,\varphi} \). By Lemma \ref{LG(s,t)}, we have \(\rho(G^*) > \rho(G_1)\), a contradiction. By repeating this process, we can obtain that $G^* \in C_{q}\bullet p_{l_j}(p(z_1),p(z_2),\dots,p(z_k))$ for \(l_j \geq 0 \).

In the following, we prove that $l_j\le2$. Suppose, to the contrary, that there exists at least a path of length greater than 3 at $z_i$, where $i\in[1,k]$. If there exist at least two paths of length greater than 3 at $z_i$ for $i\in[1,k]$, by performing an operation similar to that on $G_1$, we obtain a graph with smaller spectral radius than $G^*$, which contradicts the minimality of $G^*$.
If there is a path of length greater than 3 at $z_i$, for convenience, assume that \( P_{s+1}= z_i u_1 u_2 \dots u_s \) with \( s \geq 3 \). Let \( G_2 \) be the graph obtained from \( G^* \) by subdividing the edge \( z_i z_{i+1} \in E(C_q) \) with $3s'$ times and deleting the vertices \( \{ u_s, u_{s-1}, \dots, u_{s-3s'+1} \} \), where \( 3s' \leq s \). Then we obtain \( n(G_2) = n(G^*) \), \( \varphi(G_2) = \varphi(G^*) \), and \( G_2 \in \mathcal{U}_{n,\varphi} \). However, by Lemmas \ref{G}(i) and \ref{G_w}, we have \( \rho(G^*) > \rho(G_2) \), a contradiction. 
\end{proof}

\begin{claim}\label{W}
 If \( z_i \notin S(G^*) \), then \(z_{i+1} \in S(G^*) \) and \(
z_{i+2} \in S(G^*)\), where $1\le i\le q-2$.
\end{claim}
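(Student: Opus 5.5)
We argue by contradiction against the minimality of $G^*$. By Claim \ref{1} we may assume $G^*\in C_{q}\bullet p_{\le 2}(p(z_1),\dots,p(z_q))$, and that $S=S(G^*)$ is an optimal dissociation set, so every pendant vertex and every degree-2 quasi-pendant vertex lies in $S$. Suppose $z_i\notin S$. There are two bad cases:
\begin{enumerate}
\item[(a)] $z_{i+1}\notin S$;
\item[(b)] $z_{i+1}\in S$ but $z_{i+2}\notin S$.
\end{enumerate}
In each case we will build a graph $G_1\in\mathcal U_{n,\varphi}$ with $\rho(G_1)<\rho(G^*)$. If $G^*\cong C_n$ then $\varphi=\lfloor 2n/3\rfloor$ forces the pattern on $S$ directly, so we may assume that $G^*$ has a pendant path. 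Then every cycle edge lies on an internal path, and $G^*\not\cong W_n$ since $G^*$ has a cycle.

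The construction is a subdivide-and-trim step. In case (a) subdivide the edge $z_iz_{i+1}$ by a new vertex $w$; in case (b) subdivide $z_{i+1}z_{i+2}$. Call the result $G_w$. By Lemma \ref{G_w}, $\rho(G_w)<\rho(G^*)$. For the dissociation number:
\begin{itemize}
\item \emph{Lower bound.} In case (a), $S\cup\{w\}$ is a dissociation set of $G_w$, since both neighbours of $w$ lie outside $S$. In case (b), $z_{i+1}$ has at most one $S$-neighbour in $G^*$; after subdivision that neighbour is still present, so we instead add $w$ and check that $w$ together with $z_{i+1}$ still gives $\deg\le 1$. If $z_{i+1}$ already has an $S$-neighbour, swap that neighbour out of $S$ and argue that the size does not drop; this uses that the pendant paths have length at most $2$.
\item \emph{Upper bound.} Conversely, from any dissociation set $T$ of $G_w$, deleting $w$ (if $w\in T$), or one of the two ends of the subdivided edge (if both ends lie in $T$), gives a dissociation set of $G^*$ of size at least $|T|-1$.
\end{itemize}
Hence $\varphi(G_w)=\varphi+1$ and $n(G_w)=n+1$.

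To restore the order we delete a pendant vertex $v$ of $G_w$ and set $G_1=G_w-v$. Since $G_1$ is a proper subgraph of $G_w$, Lemma \ref{G}(i) gives $\rho(G_1)<\rho(G_w)<\rho(G^*)$. Always $\varphi(G_1)\ge\varphi(G_w)-1=\varphi$, so it remains to choose $v$ with $\varphi(G_w-v)\le\varphi$.

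I expect the choice of $v$ to be the main obstacle. I would take $v$ to be the end vertex of a pendant $P_3$ attached at some cycle vertex, say $z_j u_1 u_2$ with $v=u_2$. If $\varphi(G_w-v)=\varphi(G_w)$, then a maximum dissociation set of $G_w-v$ has the same size as one of $G_w$. By Lemma \ref{LS(G)} we may take such a set containing $u_1$ (now pendant), and then $u_2$ could be added to it, because $u_1$ has no other $S$-neighbour after exchanging $z_j$ out. This would give a dissociation set of $G_w$ of size $\varphi(G_w)+1$, a contradiction. If no pendant $P_3$ exists, all pendant paths have length $1$. In that case I would instead delete a pendant vertex $v$ whose neighbour on the cycle lies outside $S$, using a counting argument together with Theorem \ref{minU}. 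I am less certain about this subcase; it may need a direct edge rotation via Lemma \ref{N(u)} instead.

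In all cases $G_1\in\mathcal U_{n,\varphi}$ and $\rho(G_1)<\rho(G^*)$, contradicting minimality. Hence $z_{i+1},z_{i+2}\in S(G^*)$.
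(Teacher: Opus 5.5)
Your strategy is the paper's own: subdivide a cycle edge at the bad spot (it lies on an internal path), delete pendant vertices to restore the order, and compare via Lemma~\ref{G_w} and Lemma~\ref{G}(i). The paper subdivides $z_iz_{i+1}$ twice in case (a) and once in case (b). It then simply asserts that deleting ``two pendant vertices'', resp.\ ``one pendant vertex'', keeps $\varphi$. So the real content is the dissociation-number bookkeeping, and there two of your steps are false.

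\textbf{First gap.} In case (b), $\varphi(G_w)=\varphi+1$ fails when $z_{i+1}$ carries a pendant edge $z_{i+1}u$. Swapping $u$ out for $w$ only gives $\varphi(G_w)\ge\varphi$. Take the triangle $z_1z_2z_3$ with pendant paths $z_1a_1a_2$, $z_3b_1b_2$ and a pendant edge $z_2u$. Then $S=\{a_1,a_2,b_1,b_2,u,z_2\}$ is forced, $\varphi=6$, and $i=1$ is in case (b). After subdividing $z_2z_3$ by $w$, the vertex-disjoint paths $a_2a_1z_1$, $b_2b_1z_3$, $uz_2w$ give $\varphi(G_w)\le 9-3=6$.

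\textbf{Second gap.} Your trimming argument nets zero: removing $z_j$ and adding $u_2$ does not enlarge the set, so no contradiction arises. The conclusion is in fact false for an arbitrary pendant $P_3$. Take the triangle with a pendant path $z_jx_jy_j$ at every $z_j$ ($n=9$, $\varphi=6$), in case (a) at $z_1z_2$, so $\varphi(G_w)=7$. Then $V(G_w-y_3)\setminus\{z_1,z_2\}$ is a dissociation set of size $7$, so $G_w-y_3\in\mathcal U_{9,7}$. Deleting $y_1$ instead would work.

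\textbf{What is missing.} The deleted vertex must hang at an endpoint of the subdivided edge. For instance, suppose $z_j$ carries a pendant edge $z_ju$ and $z_{j'}\notin S$ is a cycle neighbour. Let $G_1=G^*-z_jz_{j'}+uz_{j'}$, i.e.\ subdivide $z_jz_{j'}$ and delete $u$.
Then $S$ remains a dissociation set of $G_1$. Conversely, if a dissociation set $T$ of $G_1$ contains both $z_j$ and $z_{j'}$, then $u\notin T$ and $(T\setminus\{z_j\})\cup\{u\}$ is a dissociation set of $G^*$. Hence $\varphi(G_1)=\varphi$.

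This repairs case (b) with a pendant edge at $z_{i+1}$ (take $j=i+1$, $j'=i+2$). The same device covers the other configurations where such an endpoint carries a pendant path: delete the end of that path, and swap the endpoint out when converting back to $G^*$.

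Two situations still need a separate argument, which neither your proposal nor the paper's sketch supplies:
the configurations where the relevant cycle vertices carry no pendant paths, and your open no-$P_3$ subcase.

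Finally, your reduction for $G^*\cong C_n$ is wrong. For $3\nmid n$ the pattern is not forced; for example $C_5$ with $S=\{z_2,z_4,z_5\}$ violates it. So this case is a genuine exception to the statement and must be excluded, not dismissed.
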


\begin{proof}
On the contrary, $z_{i+1} \notin S(G^*)$ or $z_{i+2} \notin S(G^*)$. If $z_{i+1} \notin S(G^*)$. Let $G_3$ be the graph obtained from $G^*$ by subdividing the edge $z_iz_{i+1}$ twice (the new vertices are $z',z''$) and deleting two pendant vertices. since $z_i,z_{i+1}\notin S(G^*)$, $z',z''\in S(G^*)$. By Lemma \ref{LS(G)} and Claim \ref{1}, we have \(\varphi(G_3)= \varphi(G^*) \) and \(n(G_3)= n(G^*) \), and so \( G_3 \in \mathcal{U}_{n,\varphi} \). By  Lemmas \ref{G}(i) and \ref{G_w}, we obtain that \(\rho(G^*) > \rho(G_3),
\) a contradiction. If $z_{i+2} \notin S(G^*)$, then $z_{i+1} \in S(G^*)$. Let $G_4$ be the graph obtained from $G^*$ by subdividing the edge $z_iz_{i+1}$ once (the new vertex is $z$) and deleting one pendant vertex. Since \( z_{i}, z_{i+2} \notin S(G^*) \) and \( z ,z_{i+1} \in S(G^*) \), by Lemma \ref{LS(G)} and Claim \ref{1}, we have \(\varphi(G_4)= \varphi(G^*) \) and \(n(G_4)= n(G^*) \), and so \( G_4 \in \mathcal{U}_{n,\varphi} \). By  Lemmas \ref{G}(i) and \ref{G_w}, we obtain \(\rho(G^*) > \rho(G_4),
\) a contradiction. Hence \( z_{i+1}, z_{i+2} \in S(G^*) \). 
\end{proof}

By Claim \ref{W}, we obtain $q=3k$. Let \( V(C_{3k})= X \cup Y\), where \( Y =S(G^*)\cap V(C_{3k})\). Without loss of generality, assume $z_1 \notin S(G^*)$. Then we have \( X =\{z_1, z_4, \dots, z_{3k-2}\}\). For convenience, write $X=\{w_1,\dots,w_k\}$. If $z_i\in Y$, then \( d(z_i) = 2 \), which means $N_{G^*}(z_i)\setminus V(C_{3k})=\emptyset$. Note that $S^*(G^*)=V(G^*)\setminus V(C_{3k})$
by Claim \ref{1}. Then \( X =V(G^*)\setminus S(G^*)\). Since \( \varphi(G^*) = \varphi \), we have \( \varphi =n- k \). In what follws, we consider two cases according to the parity of $ \varphi$.

\noindent\textbf{Case 1.}
\( \varphi\) is even.

Observe that \( n-k\) is even. Thus, $n-3k$ is even. We can obtain the following fact.

\begin{fact}\label{fact}
$G^*\in C_{3k}\bullet p_{2}(p(w_1),p(w_2),\dots,p(w_k))$.
\end{fact}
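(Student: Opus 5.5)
We have to show that in Case 1 every pendant path of $G^*$ has length exactly $2$, i.e. $G^*\in C_{3k}\bullet p_{2}(p(w_1),\dots,p(w_k))$. By Claim \ref{1}, every pendant path has length at most $2$. By the discussion after Claim \ref{W}, pendant paths hang only on the vertices of $X=\{w_1,\dots,w_k\}$, since every vertex of $Y$ has degree $2$. So the only thing to exclude is pendant paths of length exactly $1$, that is, pendant edges $w_iu$ with $u$ a leaf. The plan is: first show there is at most one such pendant edge, and then use the parity of $n-3k$ to rule that one out.

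\textbf{Step 1: at most one pendant edge.} Suppose there are two pendant edges $w_iu$ and $w_ju'$, where possibly $i=j$.
\begin{itemize}
\item If $i=j$: apply Lemma \ref{LG(s,t)} with $s=t=1$ at $w_i$. This replaces the two pendant edges by a single pendant $P_3$, i.e. $G_1=G^*-w_iu'+uu'$, and strictly decreases $\rho$. The order is unchanged. The dissociation number is also unchanged: the vertices off the cycle are still exactly $S^*(G_1)$ (leaves and degree-2 quasi-pendant vertices), so by Lemma \ref{LS(G)} the set $V(G_1)\setminus X$ is still an optimal dissociation set, and no smaller complement is possible, exactly as in the argument giving $\varphi=n-k$. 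This contradicts minimality.
\item If $i\neq j$: remove the leaf $u'$ and reattach it to $u$, giving $G_1=G^*-w_ju'+uu'$. This changes the degree sequence, so instead I would compare through the Perron vector using Lemma \ref{N(u)}. If $x_{u}\ge x_{w_j}$ fails, i.e. $x_u<x_{w_j}$, use the symmetric move (or its reverse). Together these give a graph of smaller spectral radius: the one with the two short branches merged into one $P_3$ at $w_i$ or at $w_j$. Again $\varphi$ is preserved by the same count $n-|X|$.
\end{itemize}

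\textbf{Step 2: parity.} The vertices outside the cycle number $n-3k$. They split as $2\cdot(\#\text{pendant }P_3\text{'s})+(\#\text{pendant edges})$. Since $n-3k$ is even in Case 1, the number of pendant edges is even. By Step 1 it is at most one, hence zero, which is Fact \ref{fact}.

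\textbf{Main obstacle.} The hard part is the case $i\neq j$ in Step 1. Lemma \ref{N(u)} needs a Perron-vector inequality such as $x_u\ge x_{w_j}$, which is false in general: a leaf entry is smaller than the entry of its neighbour. I would first try moving $u'$ onto $w_i$ itself, where Lemma \ref{N(u)} applies whenever $x_{w_i}\ge x_{w_j}$; after relabelling $i,j$ we may assume this. Then apply Lemma \ref{LG(s,t)} at $w_i$ to turn the two pendant edges now at $w_i$ into a $P_3$. Each operation leaves $X$ as a minimum complement of a dissociation set, so $\varphi$ is invariant, and each strictly decreases $\rho$, contradicting the minimality of $G^*$.
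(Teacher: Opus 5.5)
Your parity count in Step 2 and the case $i=j$ of Step 1 are fine. Lemma~\ref{LG(s,t)} with $s=t=1$ turns two pendant edges at $w_i$ into a pendant $P_3$ with smaller $\rho$. The dissociation number is preserved because every $3$-path vertex cover contains at least $\psi_3(C_{3k})=k$ cycle vertices, and $X$ is such a cover. The gap is the case $i\neq j$, which after your parity remark is the entire content of the Fact.

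\textbf{Lemma~\ref{N(u)} is used backwards.} It says that moving neighbours of $v$ onto $u$ when $x_u\ge x_v$ \emph{raises} the spectral radius. So if $x_{w_i}\ge x_{w_j}$, you get $\rho(G^*-w_ju'+w_iu')>\rho(G^*)$. Lemma~\ref{LG(s,t)} then only gives $\rho(H_i)<\rho(G^*-w_ju'+w_iu')$ for $H_i=G^*-w_ju'+uu'$. These two inequalities give no comparison between $\rho(H_i)$ and $\rho(G^*)$. Your first move has the same defect: if $x_u\ge x_{w_j}$ held, it would prove $\rho(G^*-w_ju'+uu')>\rho(G^*)$.

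To extract a decrease from Lemma~\ref{N(u)}, $G^*$ must be the \emph{output} of the shift. That would require $y_{w_j}\ge y_u$ for the Perron vector $y$ of $H_i$, or the mirror condition in $H_j$. This is the orientation the paper uses. It starts from the all-$P_3$ graph $G_5$, where $x_{w^*}\ge x_{w_i}=(\rho-\rho^{-1})x_{u_1}>x_{u_1}$. It then moves $u_2$ onto the vertex $w^*$ of largest entry in $X$ to obtain $\rho(G_6)>\rho(G_5)$.

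\textbf{The inequality Step 1 needs is false for distant pendant edges,} so this case cannot be repaired. Take $n=17$, $\varphi=12$, $k=5$.
Let $G_a$ be $C_{15}$ with one pendant $P_3$; up to isomorphism this is the only graph in $C_{15}\bullet p_2$ of order $17$.
Let $G_b$ be $C_{15}=z_1\cdots z_{15}z_1$ with pendant edges at $z_1$ and $z_7$.
Both lie in $\mathcal{U}_{17,12}$: the complement of $\{z_1,z_4,\dots,z_{13}\}$ is a dissociation set, and $\psi_3(C_{15})=5$.

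With $R(\lambda)=(\lambda I-A(C_{15}))^{-1}$, one has
$P(G_a,\lambda)=P(C_{15},\lambda)\,(\lambda^2-1-\lambda R_{11})$ and
$P(G_b,\lambda)=P(C_{15},\lambda)\,(\lambda-R_{11}-R_{17})(\lambda-R_{11}+R_{17})$.
Here $R_{11},R_{17}$ are positive and decreasing on $(2,\infty)$.
At $\lambda=2.093$ the spectral formula gives $R_{11}\approx 1.655$ and $R_{17}\approx 0.371$.
So $R_{11}>\lambda-\lambda^{-1}\approx 1.615$, while $R_{11}+R_{17}\approx 2.026<\lambda$.
Hence $\rho(G_b)<2.093<\rho(G_a)$; numerically $\rho(G_b)\approx 2.089$ and $\rho(G_a)\approx 2.097$.

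Thus merging two far-apart pendant edges into a $P_3$ \emph{increases} $\rho$. The minimizer of $\mathcal{U}_{17,12}$ is therefore not in $C_{15}\bullet p_2$, and the Fact itself fails for this $(n,\varphi)$. The paper's argument does not exclude this either. It only compares $G_5$ with the particular split graph $G_6$, not an arbitrary graph with two pendant edges at distinct vertices of $X$.
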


\renewcommand\proofname{\bf Proof }
\begin{proof} 
Suppose to the contrary that $G^*\notin C_{3k} \bullet p_{2}(p(w_1),p(w_2),\dots,p(w_k))$. Let $G_{5}\in C_{3k}\bullet p_{2}(p(w_1),p(w_2),\dots,p(w_k))$. Without loss of generality, let $P_2=u_1u_2$ attached to $w_i ~(w_i\in X)$ of $G_5$. Since \( G_5 \) contains the cycle \( C_{3k} \), we have \( \rho(G_5) > 2 \).
Suppose that $\boldsymbol{x}=(x_1,x_2,\dots,x_n)^T$ is  the Perron eigenvector of $G_{5}$. By the characteristic equation $\rho(G_{5}) x_{v_i}=\sum\limits_{v_j \in N_{G_5}(v_i)} x_{v_j}$, we have 

\[
\begin{cases}
\rho(G_{5}) x_{u_1}=x_{w_i}+x_{u_2},\\
\rho(G_{5}) x_{u_2}=x_{u_1},
\end{cases}
\]
from which we obtain that $x_{w_i} = \left( \rho(G_{5}) - \frac{1}{\rho(G_{5})} \right) x_{u_1} > x_{u_1}$, where $\rho(G_{5})>2$.
Let $x_{w^*} = \max\{ x_{w_i}|$ $ w_i \in X \}$. Then we have $x_{w^*}\ge x_{w_i} > x_{u_1}$. Let $G_{6}=G_{5}-u_1u_2+w^*u_2$. Thus 
there exists at least a pair of pendant vertices $u_1,u_2$ attached at $w_i,w^*$ ($w_i$ and $w^*$ can be the same vertex), respectively. Hence, $G_{6}\notin C_{3k}\bullet p_{2}(p(w_1),p(w_2),\dots,p(w_k))$. However, by Lemma \ref{N(u)}, we have $\rho(G_{6})>\rho(G_{5})$, which leads to a contradiction.
\end{proof}
Then by Fact \ref{fact}, we have \( C_{3k} - w_i \cong C_{3k}- w_{k-i+1} \) for \( 1\le i \leq k \). Note that \(|S^*(G^*)| = n - 3k\). 
Let
\( \frac{n-3k}{2} \equiv r \pmod{k} \). Then by Corollary \ref{U(k_1)}, we obtain that 
\begin{align}\tag{$*$}\label{*}
G^* \in C_{3k} \bullet p_2\left( \left( \frac{n - 2r - k}{2k} - 1 \right)^{[k-r]},\; \left( \frac{n - 2r - k}{2k} \right)^{[r]} \right).
\end{align}

\noindent\textbf{Case 2.}
\( \varphi\) is odd. 

Recall that $X=\{w_1,\ldots,w_k\}$. Thus the sum of all pendant vertices attached to every $w_i$ is odd. Without loss of generality, suppose $w_1$ is incident to a pendant vertex $u$. Denote $G(v)=G+uv$, where $v$ is a pendant vertex attached to $u$. Then $\varphi(G(v))=\varphi(G)+1$ by Lemma \ref{LS(G)}.  This implies $G(v)\in\mathcal{U}_{n+1,\varphi+1}$, and $\varphi(G(v))$ is even. Suppose that $G^1(v)\in C_{3k}\bullet p_2(p(w_1)+1, p(w_2), \dots, p(w_k))$. Thus by Fact \ref{fact}, we have
\begin{equation}\label{eq:1}\rho(G(v))\ge \rho(G^1(v)).\end{equation}
Let $\frac{n-3k-1}{2} \equiv r \pmod{k}$ and ${G}^2(v) \in C_{3k} \bullet p_2\left(\left(\frac{n-2r-k-1}{2k}-1\right)^{[k-r-1]},\left(\frac{n-2r-k-1}{2k}\right)^{[r+1]}\right)\in\mathcal{U}_{n+1,\varphi+1}$. Then by Corollary \ref{U(k_1)}, we obtain that 
\begin{equation}\label{eq:2}
\rho(G^1(v))\ge \rho(G^2(v)).
\end{equation}

Set $a = \frac{n - 2r - k - 1}{2k}$ and let $G_{7}$ be obtained from $G^2(v)$ by deleting a pendant vertex that lies on an arbitrary pendant path starting at $w_i$, which is a vertex with $a$  pendant paths in $G^2(v)$. Then  
$G_{7}\in C_{3k}\bullet p_2\left(\left(\frac{n-2r-k-1}{2k}-1\right)^{[k-r]}_1, \left(\frac{n-2r-k-1}{2k}\right)^{[r]}\right)(\subseteq \mathcal{U}_{n,\varphi}).$
Without loss of generality, suppose $w_k$ is incident to a pendant vertex $u'$ in $G_{7}$. Denote $G_{7}(v')=G_{7}+u'v'$, where $v'$ is a new pendant vertex attached to $u'$. Then $\varphi(G_{7}(v'))=\varphi(G_{7})+1$ by Lemma \ref{LS(G)} and $\varphi(G_{7}(v'))$ is even. Note that $G_{7}(v')\in C_{3k} \bullet p_2\left(\left(\frac{n-2r-k-1}{2k}-1\right)^{[k-r-1]},\left(\frac{n-2r-k-1}{2k}\right)^{[r+1]}\right)(\subseteq\mathcal{U}_{n+1,\varphi+1})$. Thus, by inequations (\ref{eq:1}) and (\ref{eq:2}), we have $\rho(G(v))\ge\rho(G_{7}(v'))$.
From Lemma \ref{G} (ii), we can obtain $P(G(v),\lambda)<P(G_{7}(v'),\lambda)$. Since $\varphi(G-u)=\varphi(G_{7}-u')$ is even, we get $\rho(G-u)>\rho(G_{7}-u')$ by Fact \ref{fact} and Corollary \ref{U(k_1)}. Therefore, we have $P(G-u,\lambda)<P(G_{7}-u',\lambda)$ by Lemma \ref{G} (ii).
Let $e_1=uv\in E(G(v))$ and $e_2=u'v'\in E(G_{7}(v'))$. Then from Lemma \ref{P(G)}, we obtain
\begin{align*}
P(G(v),\lambda)&=P(G(v)-e_1,\lambda)-P(G(v)-u-v,\lambda),\\
&= \lambda P(G,\lambda)-P(G-u,\lambda),
\end{align*}
\begin{align*}
P(G_{7}(v'),\lambda)&=P(G_{7}(v')-e_2,\lambda)-P(G_{7}(v')-u'-v',\lambda),\\
&= \lambda P(G_{7},\lambda)-P(G_{7}-u',\lambda).
\end{align*}
It follows that  $$\lambda(P(G_{7}, \lambda) - P(G, \lambda)) 
= \big(P(G_{7}(v'), \lambda) - P(G(v), \lambda)\big) + \big( P(G_{7} - u', \lambda) - P(G - u, \lambda) \big)>0.$$
From Lemma \ref{G} (ii), for all \( \lambda \geq \rho(G) \), we obtain that $\rho(G)>\rho(G_{7})$.
Recall that $G_{7}\in C_{3k}\bullet p_2\left(\left(\frac{n-2r-k-1}{2k}-1\right)^{[k-r]}_1,\left(\frac{n-2r-k-1}{2k}\right)^{[r]}\right)$.
Therefore \begin{align}\tag{$**$}\label{**}
G^*\in C_{3k}\bullet p_2\left(\left(\frac{n-2r-k-1}{2k}-1\right)^{[k-r]}_1,\left(\frac{n-2r-k-1}{2k}\right)^{[r]}\right).
\end{align}

We now proceed to prove the main results of Theorems \ref{T2}-\ref{P2}.

\renewcommand\proofname{\bf Proof of Theorem \ref{T2}}

\begin{proof}

(i). When $\varphi = n-1$, we have $|X|=k = n-\varphi = 1$ and $X=\{w_1\}$. If $\varphi$ is even, then
by (\ref{*}) we have $G^*=C_3\bullet p_2\left(\frac{n-3}{2}\right)$. If $\varphi$ is odd, then
by (\ref{**}), we obtain $G^*= C_3\bullet p_2\left(\left\lfloor\frac{n-3}{2}\right\rfloor_1\right)$.

(ii). When $\varphi = n-2$, we have $k=2$ and $X=\{w_1, w_2\}$. If $\varphi$ is even, then by (\ref{*}), we get $G^* = C_6 \bullet p_2\left(\left\lfloor \frac{n-6}{4} \right\rfloor, \left\lceil \frac{n-6}{4} \right\rceil\right)$. If $\varphi$ is odd, then by (\ref{**}), we have $G^* = C_6\bullet p_2\left(\left\lfloor \frac{n-6}{4} \right\rfloor_1, \left\lceil \frac{n-6}{4} \right\rceil\right)$.
\end{proof}

\renewcommand\proofname{\bf Proof of Theorem \ref{T3}}
\begin{proof}
Let $\varphi = n - k \le n- 3$. Then we have $|X|=k\ge3$, and so \(C_{3k}=S_2(C_k) \) and $X=V(C_k)$ by Claim \ref{W}. Therefore, by (\ref{*}) and (\ref{**}), we have
\[
G^* \in
\begin{cases}
\mathcal{G}^*_1=S_2(C_k) \bullet p_2\left( \left( \frac{n-2r-k}{2k} - 1 \right)^{{[k-r]}} ,\left( \frac{n-2r-k}{2k} \right)^{{[r]}}\right),  &{\varphi} \text{ is even};\\[5pt]
\mathcal{G}^*_2=S_2(C_k) \bullet p_2\left( \left( \frac{n-2r-k-1}{2k} - 1 \right)^{{[k-r]}}_{1}  ,\left( \frac{n-2r-k-1}{2k} \right)^{{[r]}}\right), &{\varphi} \text{ is odd.}
\end{cases}
\]
\end{proof}

\renewcommand\proofname{\bf Proof of Theorem \ref{P2}}
\begin{proof}
(i). Let \( n-4 \) be even. Then for \( r \in \{0,1,3\} \), the conclusion is obvious.  Let $X=V(C_4) = \{w_1,\dots,w_4\}$, \(
G_1' = S_2(C_4) \bullet p_2\left( \frac{n-16}{8}, \frac{n-16}{8}, \frac{n-8}{8}, \frac{n-8}{8} \right)
\) and 
\(
G_1'' = S_2(C_4) \bullet p_2\left(\frac{n-16}{8}, \frac{n-8}{8}, \frac{n-16}{8}, \frac{n-8}{8} \right).
\) If \( r=2 \), we have \( G^* \in\{ G_1',G_1''\} \) by Theorem \ref{T3}.  Let $V(S_2(C_4)\setminus V(C_4)) = \{z_1,\dots,z_8\}$ and 
$N_i= N_{G_1'}(w_i)\setminus N_{S_2(C_4)}(w_i),$ where $1\le i\le 4$. Then $Q_2(G_1') =\{u_1,\dots ,u_4\}= \bigcup_{i=1}^4 N_i$ and $
P(G_1')  =\{v_1,\dots ,v_4\}= \bigcup_{u_i\in N_i} N_{G_1'}(u_i)\setminus \{w_i\}$ for $1\le i\le 4$. Then \( V(G_1') \) can be partitioned as
\(
V(C_4) \cup V(S_2(C_4) \setminus V(C_4)) \cup Q_2(G_1') \cup P(G_1').
\) Therefore, \( A(G_1') \) has an equitable quotient matrix $B_\Pi(G_1')$.
Similarly, \( A(G_1'') \) has an equitable quotient matrix $B_\Pi(G_1'')$. The specific quotient matrix $B_{\Pi}(G_1')$ and its characteristic polynomial are presented in Appendix (i). By direct calculation,  we have
\[
P(B_\Pi(G_1''), \lambda) - P(B_\Pi(G_1'), \lambda) = \lambda^2 (\lambda-1)^4 (\lambda+1)^4 > 0.
\]
By Lemmas \ref{G} (ii) and \ref{B}, we obtain that  \( \rho(G_1'') < \rho(G_1') \).
Thus
\[
G^* = S_2(C_4) \bullet \mathfrak{b}p_2\left( \left( \frac{n-16}{8} \right)^2, \left( \frac{n-8}{8} \right)^2 \right).
\]

(ii). Let \( n-4 \) be odd. For \( r \in \{0,3\} \), the conclusion is obvious. If  $r=1$, let \( a=\frac{n-7}{8} \), and \(
G_2' = S_2(C_4) \bullet p_2\left(\left( a-1\right)_1,  a-1, a-1,  a \right),
\) \(
G_2'' = S_2(C_4) \bullet p_2\left( a-1,\left( a-1\right)_1, a-1, a\right). \)   Then \( G ^*\in\{ G_2',G_2''\} \) by Theorem \ref{T3}. When \( a=1\), then \( \rho(G_2') = 2.13746 > \rho(G_2'') = 2.12157 \).
When \( a=2\), we have \( \rho(G_2') = 2.34760 > \rho(G_2'') = 2.33159 \). In the following, let \( a\geq 3 \). Since \( G_2' \) contains a star graph \( S_{a+3} \) with \( \rho(S_{a+3}) = \sqrt{a+2} \), we have \(\rho( G_2')  \geq \sqrt{5} \).
 Using the same method as in  (i), we get \( A(G_2') \) has an equitable quotient matrix $B_\Pi(G_2')$ and \( A(G_2'') \) has an equitable quotient matrix $B_\Pi(G_2'')$. The specific quotient matrix $B_{\Pi}(G_2')$ and its characteristic polynomial are given in Appendix (ii). For $\lambda\ge \sqrt{5}$, we get 
\begin{align*}
P(B_\Pi(G_2''), \lambda) - P(B_\Pi(G_2'), \lambda)
= \lambda^{11} - 5\lambda^9 + 10\lambda^7 - 10\lambda^5 + 5\lambda^3 - \lambda \geq 0.
\end{align*}

By Lemmas \ref{G} (ii) and \ref{B}, we obtain that \( \rho(G_2'') < \rho(G_2') \).
Then
\[
G^* = S_2(C_4) \bullet \mathfrak{b}p_2\left( \left( \frac{n-15}{8} \right)^{[3]}_1, \left( \frac{n-7}{8} \right)^{[1]} \right).
\]

We next consider the case $r=2$. Let 
\(
G_3' = S_2(C_4) \bullet p_2\left(\left( \frac{n-17}{8}\right)_1,\frac{n-17}{8}, \frac{n-9}{8}, \frac{n-9}{8} \right),
\) and let $
G_3'' = S_2(C_4)\bullet p_2\left(\left(\frac{n-17}{8}\right)_1,\,\frac{n-9}{8},\frac{n-17}{8},\frac{n-9}{8}\right)
$.
Then \( G \in\{ G_3',G_3''\} \) by Theorem \ref{T3}. With the aid of the quotient matrices, we compute $B_{\Pi}(G_3')$ and $B_{\Pi}(G_3'')$ directly. The explicit quotient matrix $B_{\Pi}(G_3')$ and its characteristic polynomial are provided in Appendix (iii).
Since \( G_3' \) contains the cycle \( C_{12} \), we have \( \rho(G_3') > 2 \).
By direct calculation, we obtain
\begin{align*}
P(B_\Pi(G_3''), \lambda) - P(B_\Pi(G_3'), \lambda) 
= \lambda^9 - 4\lambda^7 + 6\lambda^5 - 4\lambda^3 + \lambda\geq 0.
\end{align*}

By Lemmas \ref{G} (ii) and \ref{B}, we have \( \rho(G_3'') < \rho(G_3') \).
Thus
\[
G^* = S_2(C_4) \bullet \mathfrak{b}p_2\left( \left( \frac{n-17}{8} \right)^{[2]}_1, \left( \frac{n-9}{8} \right)^{[2]} \right).
\]
This completes the proof.
\end{proof}

\section{Final remarks}
Suppose that \( G^* \) is the minimizer graph in \( \mathcal{U}_{n,\varphi} \), where  $\varphi \le n-3.$
Let \( k = n - \varphi \), \( \left\lfloor \frac{n-3k}{2} \right\rfloor \equiv r \pmod{k} \) and 
let $X=V(C_k)$. Theorem \ref{T3} of this paper obtains the structure of graphs attaining the minimal spectral radius as follows:
\[
G^* \in
\begin{cases}
\mathcal{G}^*_1=S_2(C_k) \bullet p_2\left( \left( \frac{n-2r-k}{2k} - 1 \right)^{{[k-r]}} ,\left( \frac{n-2r-k}{2k} \right)^{{[r]}}\right),  &{\varphi} \text{ is even};\\[5pt]
\mathcal{G}^*_2=S_2(C_k) \bullet p_2\left( \left( \frac{n-2r-k-1}{2k} - 1 \right)^{{[k-r]}}_{1}  ,\left( \frac{n-2r-k-1}{2k} \right)^{{[r]}}\right), &{\varphi} \text{ is odd.}
\end{cases}
\]

Theorems \ref{T2}, \ref{T4} and \ref{P2} determine the graphs with
fixed order $n$ and dissociation number $\varphi\in \{n-1, n-2, n-3,n-4\}$ having minimum spectral radii. It is natural to consider the corresponding extremal graphs for the case $\varphi\le n-5$. The Table \ref{tab:minimizer_G^*} below records the relevant results when $\varphi= n-5$, that is $k=5$ and $r\in [0,4]$. By Theorem \ref{T3}, if $\varphi$ is even, we obtain that the extremal graph is uniquely determined when $r=0,1,4$. For $r=2,3$, there exist two or more graphs in $\mathcal{G}^*_1$. Similarly, if $\varphi$ is odd, there exist two or more graphs in $\mathcal{G}^*_2$ for $r=1,2,3$. In Table \ref{tab:minimizer_G^*} below, when $r$ is determined, we take small values of $n$ to find the  extremal graph $G^*$.

\setlength{\tabcolsep}{25pt}
\renewcommand{\arraystretch}{2.1}
\begin{table}[htbp]
\centering
\caption{\small The minimizer graph $G^*$ with $k=5$, and its spectral radius ($\left\lfloor \frac{n-15}{2} \right\rfloor \equiv r \pmod{5}$).}
\label{tab:minimizer_G^*}
\vspace{4pt}
\begin{tabular}{|c|c|c|c|c|}
\hline
$\varphi=n-5$ & $r$ & $n$ & $\mathcal{G}^*_1 ~\text{or} ~\mathcal{G}^*_2$ & $ $ \\
\hline
\multirow{4}{*}{even}
& \multirow{2}{*}{2}
& \multirow{2}{*}{19}
& \includegraphics[scale=0.08]{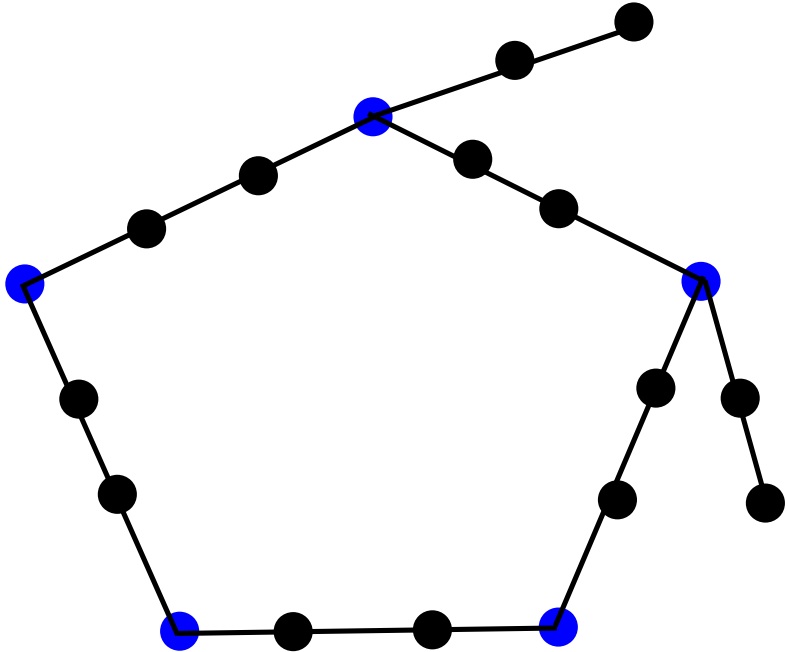} &$ \rho=2.15041$  \\
\cline{4-5}
& &
& \includegraphics[scale=0.08]{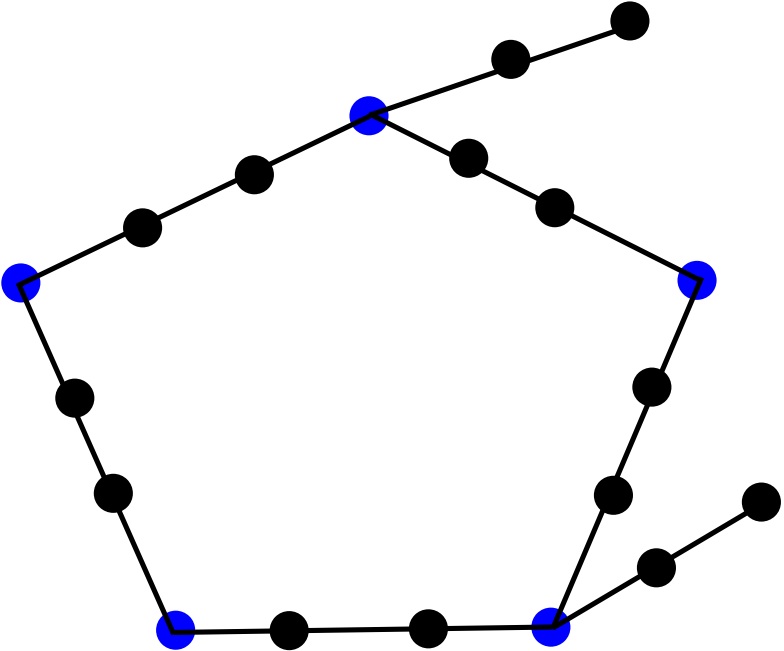}&  $\rho(G^*)=2.12322$ \\
\cline{2-5}
& \multirow{2}{*}{3}
& \multirow{2}{*}{21}
& \includegraphics[scale=0.08]{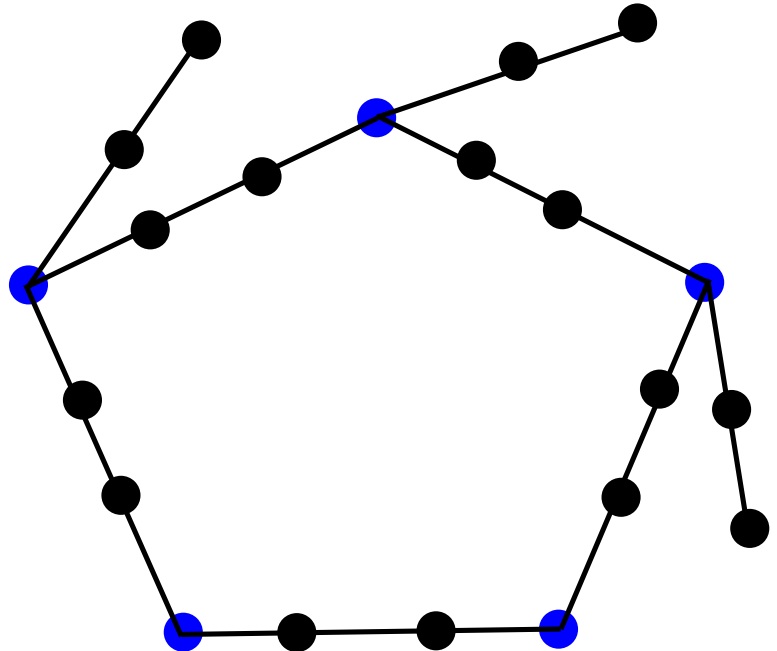} & $ \rho=2.17659$  \\
\cline{4-5}
& &
& \includegraphics[scale=0.08]{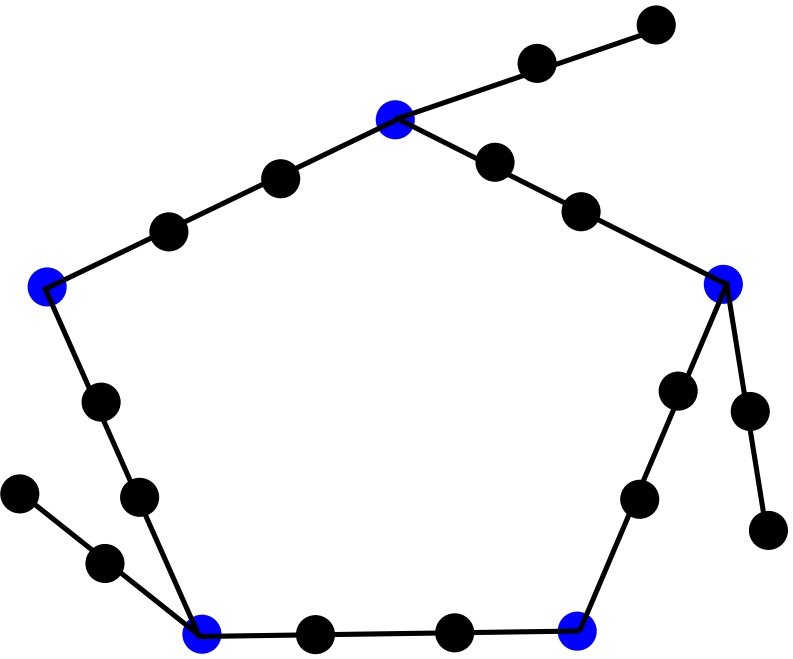}&   $\rho(G^*)=2.16109$ \\
\hline
\multirow{8}{*}{odd}
& \multirow{2}{*}{1}
& \multirow{2}{*}{18}
& \includegraphics[scale=0.08]{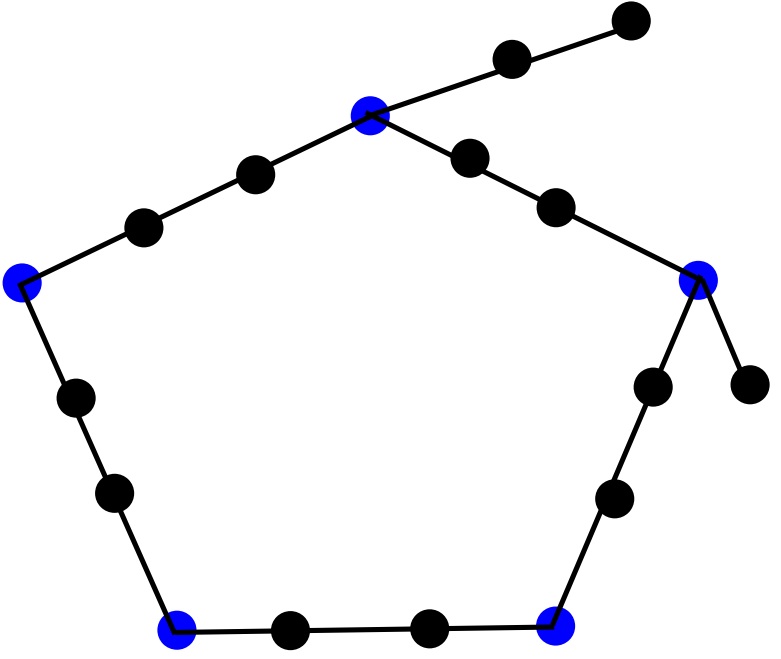} & $ \rho=2.13208$  \\
\cline{4-5}
& &
& \includegraphics[scale=0.08]{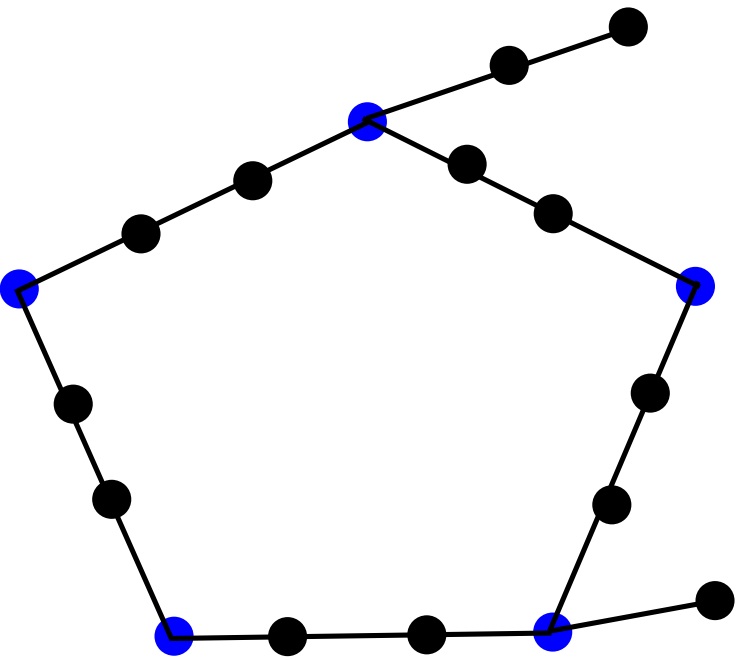}& $\rho(G^*)=2.11040 $\\
\cline{2-5}
& \multirow{4}{*}{2}
& \multirow{4}{*}{20}
& \includegraphics[scale=0.08]{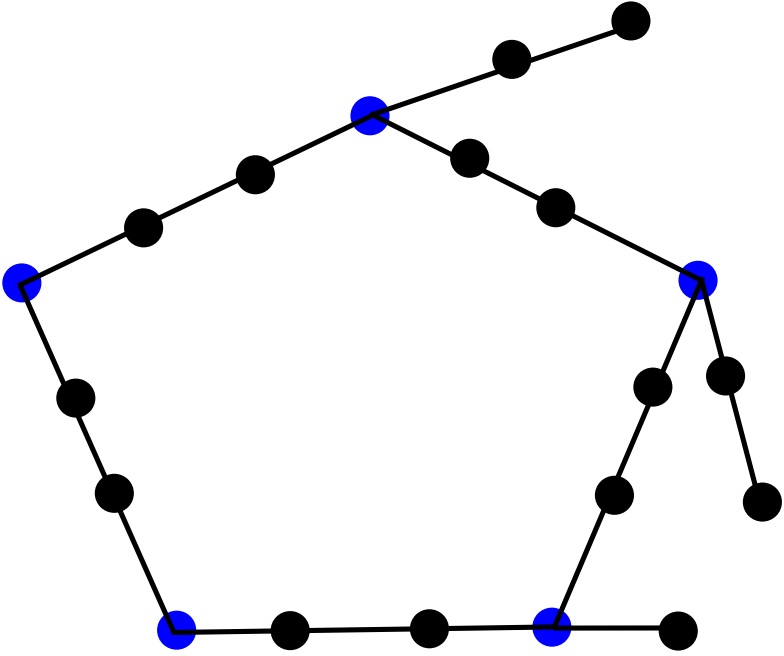} & $ \rho=2.16682$  \\
\cline{4-5}
& &
& \includegraphics[scale=0.08]{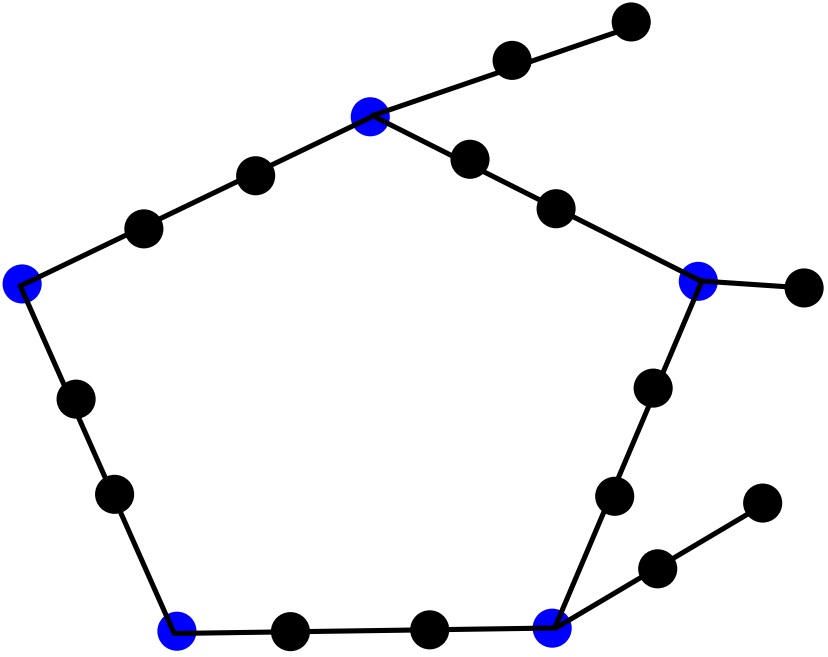}& $ \rho=2.15997$ \\
\cline{4-5}
& &
& \includegraphics[scale=0.08]{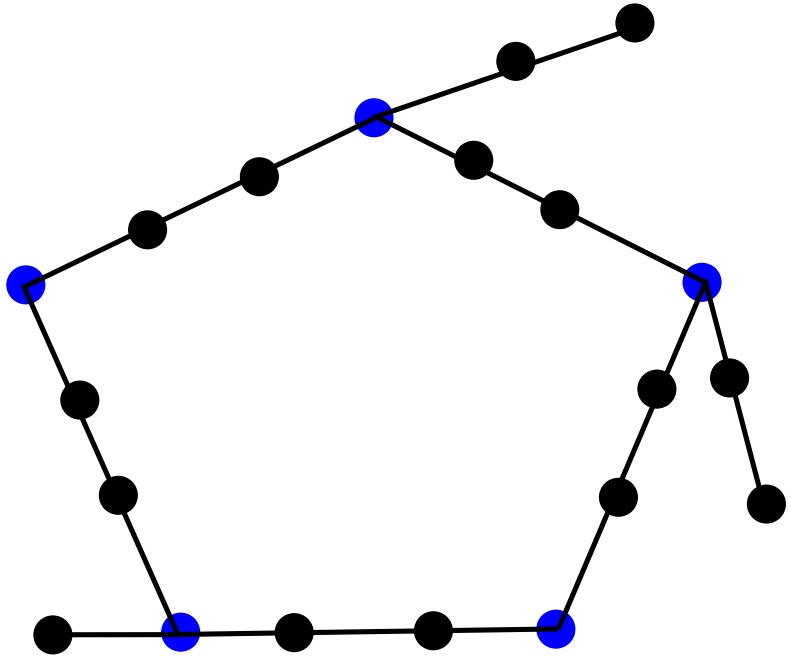} & $ \rho=2.15644$ \\
\cline{4-5}
& &
& \includegraphics[scale=0.08]{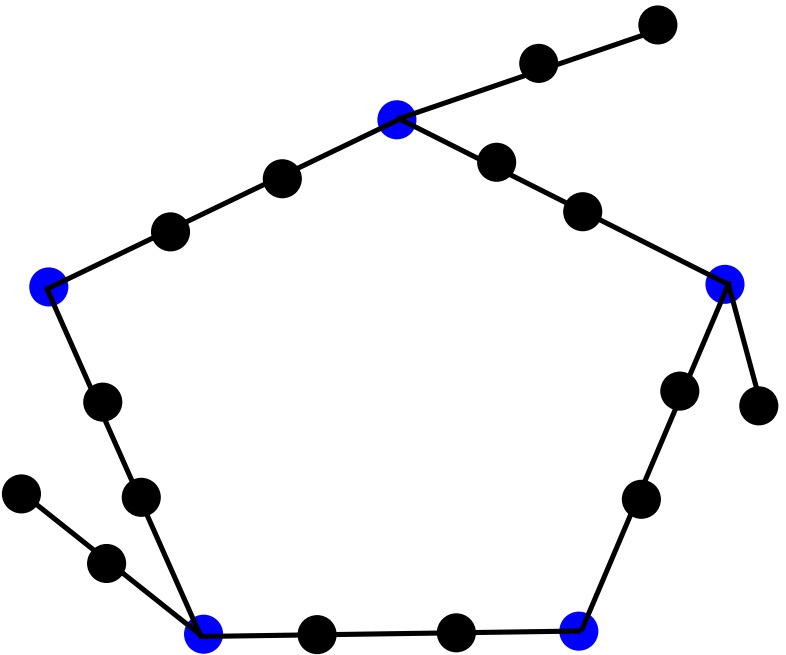} & $\rho(G^*)=2.14707$\\
\cline{2-5}
& \multirow{2}{*}{3}
& \multirow{2}{*}{22}
& \includegraphics[scale=0.08]{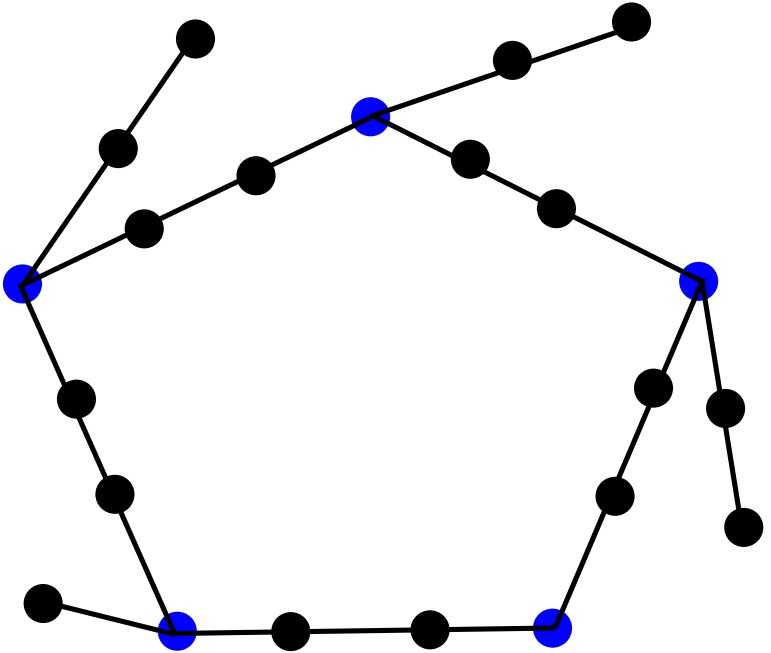} & $ \rho=2.18728$  \\
\cline{4-5}
& &
& \includegraphics[scale=0.08]{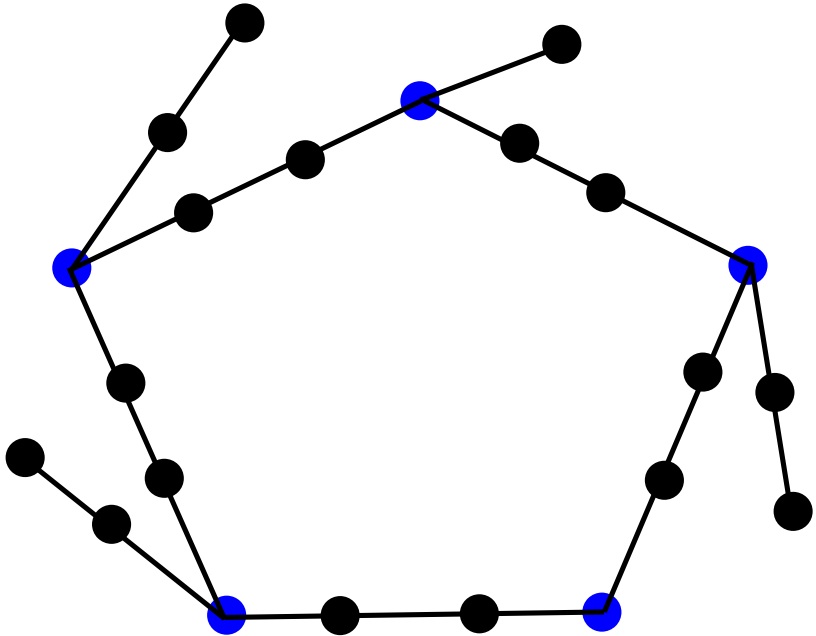}& $\rho(G^*)=2.18252 $\\
\hline
\end{tabular}
\end{table}

Based on the foregoing analysis, we propose the following conjecture to inspire future related studies.

\begin{conjecture}
Suppose that \( G^* \) is the minimizer graph in \( \mathcal{U}_{n,\varphi} \) with \( \left\lfloor \frac{2n}{3} \right\rfloor \leq \varphi \leq n-5 \).
Let $X=V(C_k)$ and let \( k = n-\varphi \) and \( \left\lfloor \frac{n-3k}{2} \right\rfloor \equiv r \pmod{k} \). Then
\[
G^* =
\begin{cases}
S_2(C_k) \bullet \mathfrak{b}p_2\left( \left( \frac{n-2r-k}{2k} - 1 \right)^{[k-r]}, \left( \frac{n-2r-k}{2k} \right)^{[r]} \right), & n-k \text{ is even}; \\
S_2(C_k) \bullet \mathfrak{b}p_2\left( \left( \frac{n-2r-k-1}{2k} - 1 \right)^{[k-r]}_1, \left( \frac{n-2r-k}{2k} \right)^{[r]} \right), & n-k \text{ is odd}.
\end{cases}
\]
\end{conjecture}

\vskip 0.382 in
\noindent

\clearpage

\section*{Declaration of Competing Interest}

The authors declare that they have no known competing financial interests or personal
relationships that could have appeared to influence the work reported in this paper.

\vskip 0.382 in
\noindent

\section*{Acknowledgements}
This research is supported by National Natural Science Foundation of China (No. 12401468), Tian shan Talent Training Program (No. 2024TSYCQNTJ0001), China Postdoctoral Science Foundation (No. 2025M783100), the Natural Science Foundation of Xinjiang Uygur Autonomous Region (No. 2023D01C165).

\vskip 0.382 in
\clearpage

\noindent
\textbf{\Large Appendix}
 
(i). Below are the matrix \( B_{\Pi}(G_1') \) and its characteristic polynomial \( P(B_{\Pi}(G_1'), \lambda) \).
\begin{figure}[htbp]
    \centering    \includegraphics[width=0.7\linewidth]{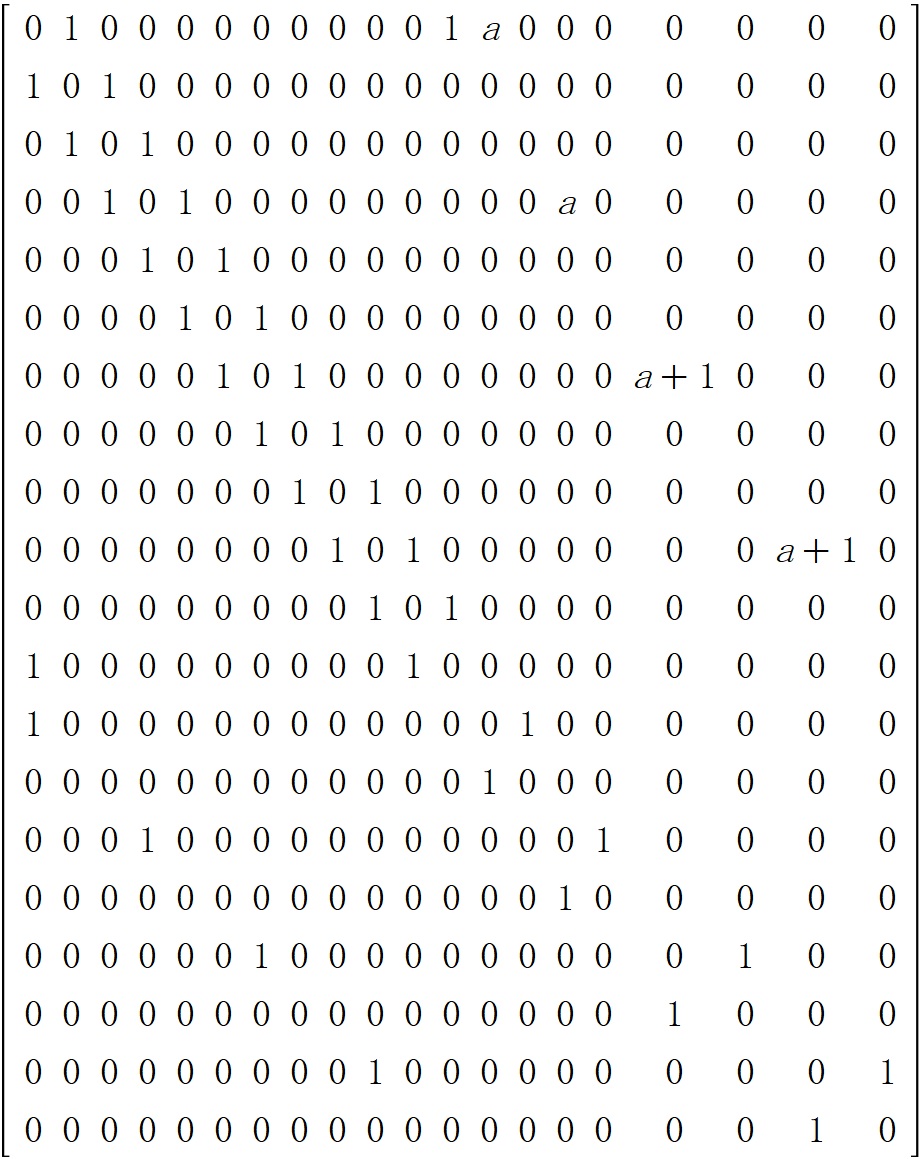}
    \begin{center}
    \small
    The vertices corresponding to the rows/columns of the matrix 
   $B_{\Pi}(G_1')$ are respectively:
    \(
    w_1,\, z_1,\, z_2,\, w_2,\, z_3,\, z_4,\, w_3,\, z_5,\, z_6,\, w_4,\, z_7,\, z_8,\, u_1,\, v_1,\, u_2,\, v_2,\, u_3,\, v_3,\, u_4,\, v_4.
    \)
  \end{center}
\end{figure}
\begin{align*}
&P(B_\Pi(G_1'), \lambda)=\!\lambda^{20}-\!(18 + \!4a)\lambda^{18} +\! (6a^2 +\! 58a + \!135)\lambda^{16} -\! (4a^3 +\! 66a^2 +\! 338a +\! 552)\lambda^{14} \\
&+ \!(a^4 +\! 30a^3 +\! 277a^2 +\! 1028a +\! 1355)\lambda^{12} - \!(4a^4 +\! 80a^3 +\!572a^2 +\! 1780a +\! 2075)\lambda^{10}+\!(6a^4 \\&+\! 100a^3 +\! 628a^2 +\! 1794a +\! 1989)\lambda^8 -\! (4a^4 +\! 60a^3 +\! 358a^2 +\! 1018a +\! 1154)\lambda^6+ \!(a^4 +\!14a^3\\& +\! 89a^2 + \!288a +\! 368)\lambda^4 -\! (4a^2 +\! 28a +\! 49)\lambda^2.
\end{align*}
Similarly, the characteristic polynomial of $B_\Pi(G_1'')$ is as follows.
\begin{align*}
&P(B_\Pi(G_1''), \lambda) =\lambda^{20} -\! (18 +\! 4a)\lambda^{18} +\! \bigl(6a^2 +\! 58a +\! 135\bigr)\lambda^{16} -\! \bigl(4a^3 +\! 66a^2 +\! 338a +\! 552\bigr)\lambda^{14} \\
&+\! \bigl(a^4 +\! 30a^3 +\! 277a^2 +\! 1028a +\! 1355\bigr)\lambda^{12} -\! \bigl(4a^4 +\! 80a^3 +\! 572a^2 +\! 1780a +\! 2074\bigr)\lambda^{10}+\! \bigl(6a^4 \\&+\! 100a^3 +\! 628a^2 +\! 1794a +\! 1985\bigr)\lambda^8 -\! \bigl(4a^4 +\! 60a^3 + \!358a^2 +\! 1018a +\! 1148\bigr)\lambda^6+\! \bigl(a^4 +\! 14a^3 \\&+\! 89a^2 +\! 288a +\! 364\bigr)\lambda^4 -\! \bigl(4a^2 +\! 28a +\! 48\bigr)\lambda^2.
\end{align*}
\clearpage

(ii). Below are the matrix \( B_{\Pi}(G_2') \) and its characteristic polynomial \( P(B_{\Pi}(G_2'), \lambda) \).
\begin{figure}[htbp]
    \centering
\includegraphics[width=0.7\linewidth]{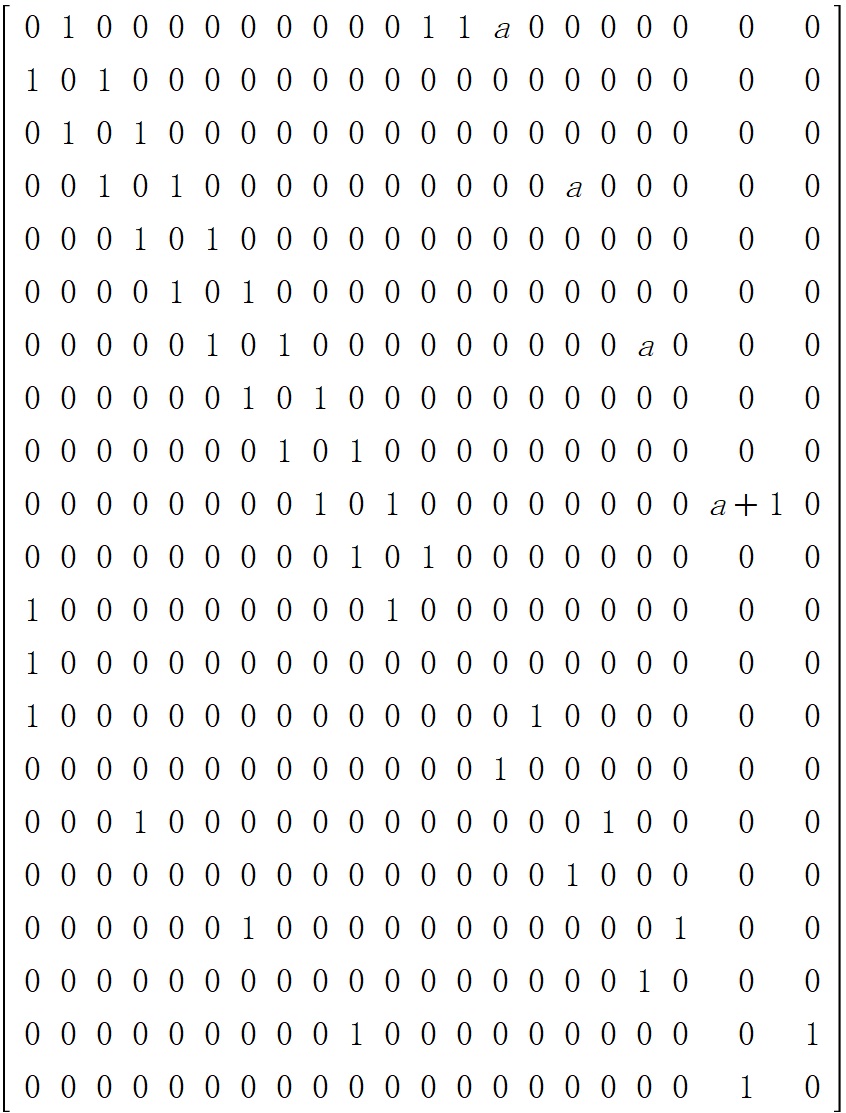}
    \begin{center}
    \small
    The vertices corresponding to the rows/columns of the matrix 
   $B_{\Pi}(G_2')$ are respectively:
    \(
    w_1,\, z_1,\, z_2,\, w_2,\, z_3,\, z_4,\, w_3,\, z_5,\, z_6,\, w_4,\, z_7,\, z_8,\, u,\,u_1,\, v_1,\, u_2,\, v_2,\, u_3,\, v_3,\, u_4,\, v_4.
    \)
  \end{center}
\end{figure}
\begin{align*}
&P(B_\Pi(G_2'), \lambda) =\!\lambda^{21} - \!(18 +\! 4a)\lambda^{19} +\! \bigl(6a^2 +\! 58a +\! 136\bigr)\lambda^{17} - \!\bigl(4a^3 +\! 66a^2 + \!341a +\! 566\bigr)\lambda^{15} \\
&+\! \bigl(a^4 +\! 30a^3 + \!280a^2 +\! 1060a +\! 1434\bigr)\lambda^{13} -\! \bigl(4a^4 +\! 81a^3 + \!594a^2 + \!1911a + \!2309\bigr)\lambda^{11}+ \!\bigl(6a^4\\& +\! 104a^3 +\! 686a^2 +\! 2060a + \!2387\bigr)\lambda^9 -\! \bigl(4a^4 +\! 66a^3 + \!430a^2 +\! 1307a +\! 1552\bigr)\lambda^7 + \!\bigl(a^4 +\! 18a^3 \\&+ \!132a^2 + \!452a +\!595\bigr)\lambda^5 -\! \bigl(a^3 +\! 14a^2 + \!69a +\! 115\bigr)\lambda^3 +\! (2a +\! 7)\lambda.
\end{align*}
Similarly, the characteristic polynomial of $B_\Pi(G_2'')$ is as follows.
\begin{align*}
&P(B_\Pi(G_2''), \lambda) =\lambda^{21} -\! (18 +\! 4a)\lambda^{19} +\! \bigl(6a^2 +\! 58a +\! 136\bigr)\lambda^{17} -\! \bigl(4a^3 +\! 66a^2 +\! 341a +\! 566\bigr)\lambda^{15} \\
&+\! \bigl(a^4 +\! 30a^3 +\! 280a^2 +\! 1060a +\! 1434\bigr)\lambda^{13} -\! \bigl(4a^4 +\! 81a^3 +\! 594a^2 +\! 1911a +\! 2308\bigr)\lambda^{11}+\! \bigl(6a^4\\& +\! 104a^3 +\! 686a^2 +\! 2060a +\! 2382\bigr)\lambda^9 -\! \bigl(4a^4 +\! 66a^3 +\! 430a^2 +\! 1307a +\! 1542\bigr)\lambda^7+\! \bigl(a^4 +\! 18a^3\\& + \!132a^2 + \!452a +\! 585\bigr)\lambda^5 -\! \bigl(a^3 +\! 14a^2 +\! 69a +\! 110\bigr)\lambda^3 +\! (2a +\! 6)\lambda.
\end{align*}

\clearpage

(iii). Below are the matrix \( B_{\Pi}(G_3') \) and its characteristic polynomial \( P(B_{\Pi}(G_3'), \lambda) \).
\begin{figure}[htbp]
    \centering
\includegraphics[width=0.7\linewidth]{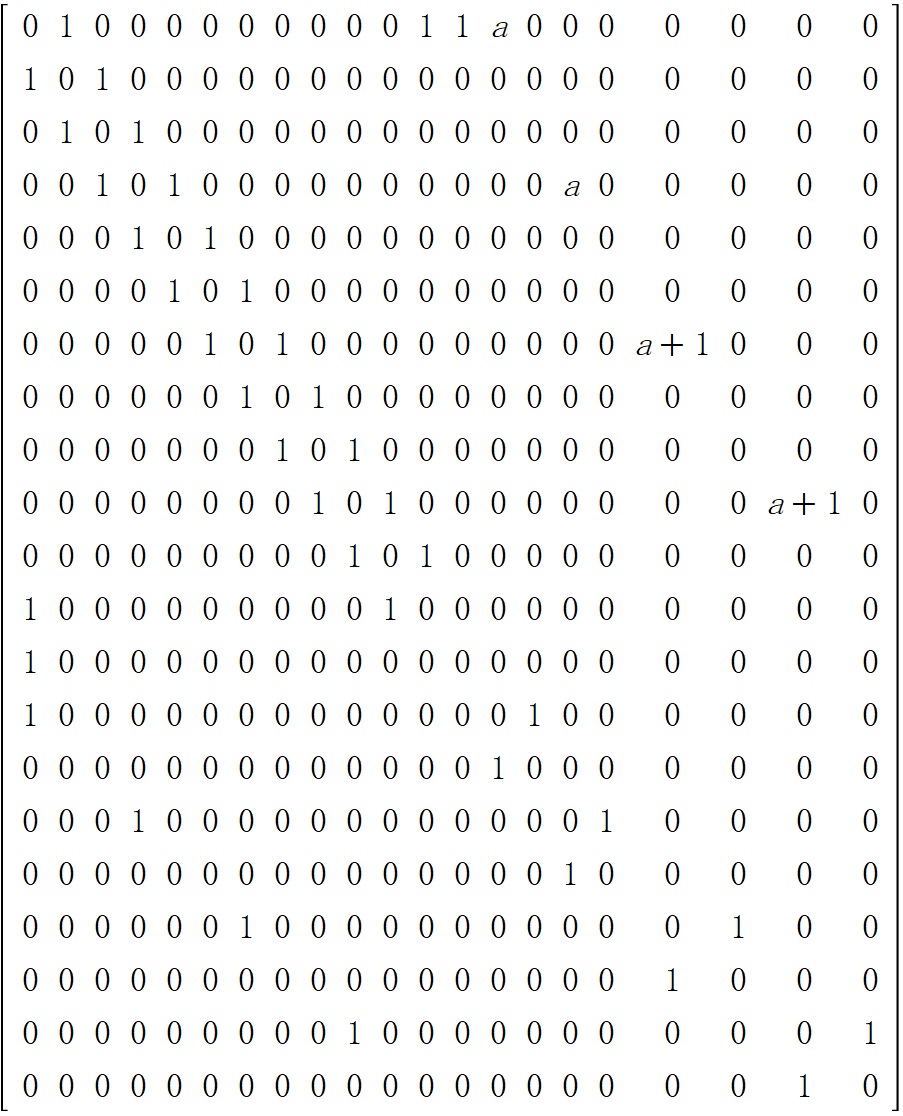}
    \begin{center}
     \small
    The vertices corresponding to the rows/columns of the matrix 
   $B_{\Pi}(G_3')$ are respectively:
    \(
    w_1,\, z_1,\, z_2,\, w_2,\, z_3,\, z_4,\, w_3,\, z_5,\, z_6,\, w_4,\, z_7,\, z_8,\, u,\,u_1,\, v_1,\, u_2,\, v_2,\, u_3,\, v_3,\, u_4,\, v_4.
    \)
  \end{center}
\end{figure}
\begin{align*}
&P(B_\Pi(G_3'), \lambda)=\lambda^{21} -\! (19 +\! 4a)\lambda^{19} +\! \bigl(6a^2 +\! 61a +\! 151\bigr)\lambda^{17} -\! \bigl(4a^3 +\! 69a^2 +\! 375a +\! 657\bigr)\lambda^{15} \\
&+\! \bigl(a^4 +\! 31a^3 +\! 303a^2 +\! 1208a +\! 1725\bigr)\lambda^{13} -\! \bigl(4a^4 +\! 85a^3 +\! 657a^2 +\! 2232a +\! 2847\bigr)\lambda^{11}+\! \bigl(6a^4\\& +\! 110a^3 +\! 768a^2 +\! 2439a +\! 2980\bigr)\lambda^9 -\! \bigl(4a^4 +\! 70a^3 +\! \!483a^2 +\! 1551a + \!1935\bigr)\lambda^7+\! \bigl(a^4 +\! 19a^3 \\&+\! 147a^2 +\! 530a +\! 728\bigr)\lambda^5 -\! \bigl(a^3 +\! 15a^2 +\! 78a +\! 134\bigr)\lambda^3 +\! (2a +\! 7)\lambda.
\end{align*}
Similarly, the characteristic polynomial of $B_\Pi(G_3'')$ is as follows.
\begin{align*}
&P(B_\Pi(G_3''), \lambda)=\lambda^{21} -\! (19 + 4a)\lambda^{19} +\! \bigl(6a^2 + \!61a +\! 151\bigr)\lambda^{17} -\! \bigl(4a^3 + \!69a^2 +\! 375a +\! 657\bigr)\lambda^{15} \\
&+\! \bigl(a^4 +\! 31a^3 +\! 303a^2 + \!1208a +\! 1725\bigr)\lambda^{13} -\! \bigl(4a^4 +\! 85a^3 +\! 657a^2 + \!2232a +\! 2847\bigr)\lambda^{11}+\! \bigl(6a^4\\& + \!110a^3 + \!768a^2 +\! 2439a +\! \!2981\bigr)\lambda^9 -\! \bigl(4a^4 +\! 70a^3 +\! 483a^2 +\! 1551a +\! \!1939\bigr)\lambda^7+\! \bigl(a^4 +\! 19a^3 \\&+\! 147a^2 +\! 530a +\! 734\bigr)\lambda^5 -\! \bigl(a^3 +\! 15a^2 +\! 78a +\! 138\bigr)\lambda^3 +\! (2a +\! 8)\lambda.
\end{align*}

\end{document}